\DeclareMathAlphabet{\mathpzc}{OT1}{pzc}{m}{it}
\theoremstyle{plain}
\newtheorem{theorem}{Theorem}[section]
\newtheorem{corollary}[theorem]{Corollary}
\newtheorem{lemma}[theorem]{Lemma}
\newtheorem{proposition}[theorem]{Proposition}
\theoremstyle{definition}
\newtheorem{remark}[theorem]{Remark}
\newtheorem*{theorem*}{Theorem}
\newcommand{\red}{\mathrm{red}}
\newcommand{\irr}{\mathrm{irr}}
\newcommand{\g}{\gamma}
\newcommand{\G}{\Gamma}
\newcommand{\cD}{\mathcal{D}}
\newcommand{\cG}{\mathcal{G}}
\newcommand{\cH}{\mathcal{H}}
\newcommand{\cM}{\mathcal{M}}
\newcommand{\cI}{\mathcal{I}}
\newcommand{\cL}{\mathcal{L}}
\newcommand{\cR}{\mathcal{R}}
\newcommand{\cT}{\mathcal{T}}
\newcommand{\cV}{\mathcal{V}}
\newcommand{\RR}{\mathbb{R}}
\newcommand{\CC}{\mathbb{C}}
\newcommand{\ZZ}{\mathbb{Z}}
\newcommand{\x}{\times}
\newcommand{\la}{\langle}
\newcommand{\ra}{\rangle}
\newcommand{\frM}{{\frak M}}
\newcommand{\frX}{{\frak X}}
\newcommand{\Id}{\mathrm{Id}}
\newcommand{\id}{\mathrm{Id}}
\newcommand{\Sym}{\mathrm{Sym}}
\newcommand{\VarC}{\cV ar_k}
\DeclareMathOperator{\Hom}{Hom}
 \DeclareMathOperator{\diag}{diag}
\DeclareMathOperator{\Gr}{Gr}
\DeclareMathOperator{\GL}{GL}
\DeclareMathOperator{\PGL}{PGL}
\DeclareMathOperator{\SL}{SL}
\DeclareMathOperator{\SU}{SU}
\DeclareMathOperator{\Spec}{Spec}
\DeclareMathOperator{\tr}{tr}
\DeclareMathOperator{\PExp}{PExp}
 \title[Motive of the $\SL_4$-character variety of torus knots]{Motive of the $\SL_4$-character variety of torus knots}
\subjclass[2020]{Primary: 14M35. Secondary: 57K31, 14D20, 14C15}
\keywords{Torus knot, character varieties, representations}
 \author[\'A. Gonz\'alez-Prieto]{\'Angel Gonz\'alez-Prieto}
 \address{Departamento de
  \'Algebra, Geometr\'ia y Topolog\'ia, Universidad Complutense de Madrid, Plaza Ciencias 3, 28040 Madrid, Spain}
  \address{Instituto de Ciencias Matem\'aticas (CSIC-UAM-UCM-UC3M), C/ Nicol\'as Cabrera 13-15, 28049 Madrid, Spain}
  \email{angelgonzalezprieto@ucm.es}
  \author[V. Mu\~{n}oz]{Vicente Mu\~{n}oz}
 \address{Departamento de
  \'Algebra, Geometr\'ia y Topolog\'ia, Facultad de Ciencias,
  Universidad de M\'alaga, Campus de Teatinos s/n, 29071 Málaga,
  Spain}\email{vicente.munoz@uma.es}
\begin{document}

\begin{abstract}
  In this paper, we compute the motive of the character variety of representations of
  the fundamental group of the complement of an arbitrary torus knot into $\SL_4(k)$, for any
  algebraically closed field $k$ of zero characteristic. For that purpose, we introduce a stratification
  of the variety in terms of the type of a canonical filtration attached to any
  representation. 
  This allows us to reduce the computation of the motive to a combinatorial problem.
\end{abstract}

\maketitle

\section{Introduction}\label{sec:introduction}

Let $\Gamma$ be a finitely generated group and let $G$ be an algebraic group over an
algebraically closed field $k$ of zero characteristic. The space $R(\Gamma, G)$ of representations $\rho: \Gamma \to G$
forms an algebraic variety known as the representation variety. Moreover, if we want to parametrize
isomorphism classes of representations, we need to consider the Geometric Invariant Theory (GIT)
quotient
$$
	\frM(\Gamma, G) = R(\Gamma, G) \sslash G,
$$
where $G$ acts on $R(\Gamma, G)$ by conjugation. 
This gives rise to the moduli space of representations of $\G$ into $G$, also called the $G$-character variety of $\G$.

Character varieties are very rich objects that contain subtle geometric information linking
distant areas in mathematics. An important instance is when we take $\Gamma = \pi_1(\Sigma)$
to be the fundamental group of the compact orientable surface of genus $g$. In this case, these character varieties are
one of the three incarnations of the moduli space of Higgs bundles, as stated by the celebrated
non-abelian Hodge correspondence \cite{Corlette:1988,hitchin,SimpsonI,SimpsonII}. For this reason, character varieties of surface groups
have been widely studied, particularly regarding the computation of some algebraic invariants like their
$E$-polynomial (an alternating sum of its Hodge numbers in the spirit of the 
Euler characteristic). Computing such invariant is a hard problem that has been tackled from an
arithmetic viewpoint \cite{Hausel-Letellier-Villegas:2013,Hausel-Rodriguez-Villegas:2008}, 
from a geometric perspective \cite{lomune, MM} and from the point of view of Topological Quantum Field Theories \cite{GP-2019, GPLM-2017}.

Character varieties also play a prominent role in the topology of $3$-manifolds, starting
with the foundational work of Culler and Shalen \cite{CS}. There, the authors used some quite simple algebro-geometric properties of $\SL_2(\CC)$-character varieties to provide new proofs of very remarkable results, as Thurston's theorem that
says that the space of hyperbolic structures on an acylindrical $3$-manifold is compact, or the Smith conjecture.

Since the publication of that paper, character varieties of $3$-manifolds have been the object of very intense research. In particular, they have been used to study knots $K\subset S^3$, by analyzing the character varieties associated to the fundamental group of their complements, $\Gamma = \pi_1(S^3-K)$. The geometry of these knot character varieties has been studied in \cite{Florentino-Lawton:2012,Florentino-Nozad-Zamora:2019,Lawton,LM} for trivial links (i.e.\ when $\Gamma$ is a free group) and \cite{Falbel:2014, HMP} for figure eight knots. However, one of the most studied cases is when $K$ is an $(n,m)$-torus knot. In this case, we need to consider representations of the group
$$
	\Gamma = \Gamma_{n,m} = \langle x, y \,|\, x^n = y^m \rangle
$$
into an algebraic group $G$, typically $G = \SL_r(\CC)$. For $G = \SL_2( \CC)$ they were studied in \cite{KM, Martin-Oller, Munoz}, and the case $G = \SL_3(\CC)$ was addressed in \cite{MP}. It was also studied for $G = \SU(2)$ in \cite{Martinez-Munoz:2015}.

Nevertheless, much less is known in the higher rank case $r \geq 4$, even for torus knots. In particular, providing a complete description of their geometry is still an open problem. In this paper, we will address the problem of computing, over an arbitrary algebraically
closed field $k$ 
of zero characteristic, the virtual classes of the character varieties of torus knots in the Grothendieck ring of algebraic varieties, $K (\VarC)$. As we will see, these classes lie in the subring of $K (\VarC)$ generated by the class of the affine line $q = [k]$ (the so-called Lefschetz motive), so it actually computes the motive of the character variety in the Chow ring and its $E$-polynomial. 

This paper is structured as follows. In Section \ref{sec:character} we review some generalities about representation and character varieties as well as the Grothendieck ring of algebraic varieties. Particularly, in Section \ref{sec:semi-simple-filt} we introduce a canonical filtration attached to a representation, with semi-simple graded representation, that we will refer to as the semi-simple filtration. This filtration will be very useful because it induces a natural decomposition of the representation variety according to the type (i.e.\ the dimension and multiplicity of each isotypic component of the graded pieces) of the semi-simple filtration of a representation.

In Section \ref{sec:torus}, we specialize to the case of torus knots. There, we outline the strategy that we will follow throughout this paper.
We focus on the subvariety $R^{\irr}(\G, \SL_r(k)) \subset R(\G, \SL_r(k))$ of irreducible representations. The key point is that, as shown in Proposition \ref{prop:cor:9.5}, an irreducible representation $\rho: \G_{n,m} \to \SL_r(k)$ lifts, up to rescaling, to a representation of the free product
\[
\begin{displaystyle}
   \xymatrix
   { \ZZ_n \star \ZZ_m \ar@{--{>}}[rd]^{\tilde{\rho}} & \\
    \Gamma_{n,m} \ar[r]_{\rho} \ar[u]  & \SL_r(k).
   }
\end{displaystyle}   
\]
This opens the door to computing the motive of the irreducible representation variety of torus knots through the representation variety of $\ZZ_n \star \ZZ_m$ via
$$
	[R^{\irr}(\G_{n,m}, \SL_r(k))] = [R(\ZZ_n \star \ZZ_m, \SL_r(k))]- [R^{\red}(\ZZ_n \star \ZZ_m, \SL_r(k))],
$$
where $R^{\red}(\ZZ_n \star \ZZ_m, \SL_r(k))$ is the subvariety of reducible representations of $\ZZ_n \star \ZZ_m$.

This greatly simplifies the problem because the representations $\rho: \ZZ_n \star \ZZ_m \to \SL_r(k)$ retain some crucial properties from the classical representation theory of finite groups, for instance, only finitely many configurations of eigenvalues are allowed for the elements $\rho(x), \rho(y) \in \SL_r(k)$. Hence, we can refine the decomposition of the representation variety according to its semi-simple filtration in order to obtain a decomposition
\begin{equation}\label{eq:red-decomp}
	R^{\red}(\ZZ_n \star \ZZ_m, \SL_r(k)) = \bigsqcup_{\kappa} \bigsqcup_{\tau \in \cT_\kappa^*} R(\tau),
\end{equation}
where $\kappa$ runs over the possible configurations of eigenvalues, $\cT_\kappa^*$ are the types of semi-simple filtrations that correspond to reducible representations with eigenvalues taken from $\kappa$, and $R(\tau)$ is the collection of representations $\rho: \ZZ_n \star \ZZ_m \to \SL_r(k)$ with semi-simple filtration of type $\tau \in \cT_\kappa^*$. Notice that the previous decomposition has finitely many strata since there are finitely many choices for $\kappa$ and $\tau$.

Leaving aside for a moment the count of the number of contributing strata, the problem of computing the motive $[R(\tau)] \in K (\VarC)$ for a fixed type $\tau$ is addressed in Section \ref{sec:fixed-type}, which is the heart of this paper. Roughly speaking, in Proposition \ref{prop:fibration} and Corollary \ref{cor:no-action-discrete} we will show that (for low rank) $R(\tau)$ is the total space of a locally trivial fibration in the Zariski topology
\begin{equation}\label{eq:fibration-Xtau}
	R(\tau) \to \frM_\tau,
\end{equation}
that fibers over the subvariety $\frM_\tau$ of semi-simple representations of type $\tau$. The fiber of this map is given by a quotient $\cM_\tau / \cG_\tau$, where $\cM_\tau$ is a variety parametrizing the ways in which a semi-simple representation can be completed to an arbitrary reducible representation and $\cG_\tau \subset \GL_r(k)$ is a \emph{gauge group} acting freely on $\cM_\tau$, that identifies equivalent completions.

The description of Section \ref{sec:fixed-type} will be enough for computing the motives of $\cM_\tau$ and $\cG_\tau$. In this way, in Section \ref{sec:explicit-formulas} we give a combinatorial recipe for calculating them in terms of the repeated eigenvalues of $\kappa$ and the type $\tau$. Observe that, since we are dealing with reducible representations, all the irreducible components appearing in $\frM_\tau$ are forced to have lower rank. In such manner, the results of this section provide a recursive method for computing the motive of $R(\tau) \subset R(\G_{n,m}, \SL_r(k))$ from the knowledge of $[R^{\irr}(\G_{n,m}, \SL_s(k))]$ for $s < r$.

Regarding the number of components in (\ref{eq:red-decomp}), as a byproduct of Section \ref{sec:explicit-formulas}, we will show that the motive $[R(\tau)]$ for $\tau \in \cT_\kappa^*$ does not depend on the particular values of the eigenvalues of the configuration $\kappa$, but only on the multiplicity of its repeated eigenvalues. Hence, the sum in $\kappa$ of (\ref{eq:red-decomp}) amounts to multiplying $[R(\tau)]$ by a combinatorial coefficient that counts the number of admissible configurations with prescribed multiplicities. This counting is a combinatorial problem that is addressed in Section \ref{sec:count-comp}. In Section \ref{subsec:coprime}, we perform the calculation in the case that the (rather artificial) condition $\gcd(n,r) = \gcd(m,r) = 1$ holds, obtaining closed formulas for these coefficients in terms of the usual multinomial numbers. This hypothesis is dropped in Section \ref{sec:number-components}, where the general case is studied by means of generating functions. In that setting, we get more involved formulas that turn out to agree with the simpler closed forms of Section \ref{subsec:coprime} for $r \leq 4$, even if the  primality condition no longer holds. 

In Section \ref{sec:rank-low}, we use the method developed in this paper to compute the motive of the character variety of irreducible representations of torus knots for $G = \SL_2(k)$ (Section \ref{sec:rank2}) and $G = \SL_3(k)$ (Section \ref{sec:rank3}). This reproves the existing calculations in the literature of these motives in \cite{Munoz} and \cite{MP}, respectively. For that purpose, we provide a detailed description of the possible types and configurations of eigenvalues that may occur. 

The aim of Section \ref{sec:rank4} is to address the computation of the character variety of irreducible representations in the case $G = \SL_4(k)$, $\frM^{\irr}(\Gamma, \SL_4(k))$, which is the main novel contribution of this paper. This rank is the first case in which we need to deal with representations with isotypic components of dimension $2$ and multiplicity higher than $1$. For this reason, we divide the analysis into two parts. In the first case, studied in Section \ref{subsec:8.1}, we consider the seven eigenvalue configurations in which such high multiplicity components cannot happen. The counting in this case is completely analogous to the rank $2$ and $3$ cases.

On the other hand, in Section \ref{subsec:8.2} we deal with the three eigenvalue configurations $\kappa$ that admit repeated isotypic components. In this case, the calculation of $\cM_\tau$ for a type $\tau \in \kappa$ is trickier than in Section \ref{subsec:8.1} since, roughly speaking, $\cM_\tau$ is the complement of some Schubert cells as described in Section \ref{sec:fixed-type}. Nonetheless, this more involved situation can be analyzed with the tools introduced in Section \ref{sec:explicit-formulas}. Due to the huge number of possible types that we need to consider (more than $350$ in comparison with $2$ for $G = \SL_2(k)$ and $23$ for $G = \SL_3(k)$), we do not provide an exhaustive enumeration of all the possibilities. Nevertheless, the description of Sections \ref{sec:fixed-type} and \ref{sec:explicit-formulas} is explicit enough to give rise to a precise algorithm that can be implemented in a computer algebra system. An implementation in SageMath \cite{SageMath} created by the authors can be checked in \cite{GPMScript}. From this computation, we obtain the main result of this paper.

\begin{theorem} \label{thm:main}
The motive of the irreducible $\SL_4(k)$-character variety of the group
$\Gamma = \Gamma_{n,m}$ of the $(n,m)$-torus knot is given by
\begin{equation*}
\begin{aligned}
[\frM^{\irr}(&\Gamma, \SL_4(k))] \!\!=\! {\textstyle \frac{4}{nm} \binom{n}{4}\binom{m}{4} }
{\left(q^{9} + 6  q^{8} + 20  q^{7} + 17  q^{6} - 98  q^{5} - 26  q^{4} + 38  q^{3} + 126  q^{2} - 144\right)}  \\
&+ {\textstyle \frac{4}{nm} \Big(\binom{n}{4}\binom{m}{2,1}+ \binom{n}{2,1}\binom{m}{4}\Big)}
{\left(q^{7} + 5  q^{6} + 7  q^{5} - 34  q^{4} + 34  q^{2} + 18  q - 48\right)}  \\
&+ {\textstyle \frac{4}{nm} \Big(\binom{n}{4}\binom{m}{2}+ \binom{n}{2}\binom{m}{4}\Big)}
 {\left(q^{5} + 4  q^{4} - 11  q^{3} + q^{2} + 18  q - 18\right)}  \\
&+ {\textstyle \frac{4}{nm} \Big(\binom{n}{4}\binom{m}{1,1}+ \binom{n}{1,1}\binom{m}{4}\Big)}
 {\left(q^{3} - 4  q^{2} + 6  q - 4\right)}  \\ & + 
 {\textstyle \frac{4}{nm} \Big(\binom{n}{2,1}\binom{m}{2}+ \binom{n}{2}\binom{m}{2,1}\Big)}
 {\left(q^{3} - 3  q^{2} + 5  q - 4\right)} \\
&+ {\textstyle \frac{4}{nm} \binom{n}{2,1}\binom{m}{2,1}}
 {\left(q^{5} + 2  q^{4} - 10  q^{3} + 7  q^{2} + 11  q - 17\right)},
\end{aligned}
\end{equation*}
where $q = [k] \in K (\VarC)$ is the motive of the affine line and the multinomial numbers are defined in Corollary \ref{cor:form-N-final}.
\end{theorem}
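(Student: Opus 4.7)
The plan is to instantiate the general strategy of Sections \ref{sec:torus}--\ref{sec:number-components} in the rank $4$ case. First, by Proposition \ref{prop:cor:9.5} I lift irreducible representations $\rho \colon \Gamma_{n,m} \to \SL_4(k)$, up to rescaling, to representations of the free product $\ZZ_n \star \ZZ_m$, and exploit the identity
\[
[R^{\irr}(\Gamma_{n,m}, \SL_4(k))] = [R(\ZZ_n \star \ZZ_m, \SL_4(k))] - [R^{\red}(\ZZ_n \star \ZZ_m, \SL_4(k))]
\]
to reduce the problem to understanding the reducible locus. The total motive $[R(\ZZ_n \star \ZZ_m, \SL_4(k))]$ itself splits as a sum over conjugacy class data for pairs of torsion elements in $\SL_4(k)$, whose motives are produced directly from the classification of their eigenvalue patterns.

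Next, I apply the stratification (\ref{eq:red-decomp}) to decompose the reducible locus into pieces indexed by eigenvalue configurations $\kappa$ of $(\rho(x), \rho(y))$ and by types $\tau \in \cT_\kappa^*$ of the associated semi-simple filtration. For $r = 4$ there are ten configurations in total: the seven handled in Section \ref{subsec:8.1}, in which every isotypic component has multiplicity one and the combinatorics mirror the $\SL_2$ and $\SL_3$ analyses, and the three of Section \ref{subsec:8.2}, in which a two-dimensional isotypic component of multiplicity two can occur. For each type $\tau$, Proposition \ref{prop:fibration} together with Corollary \ref{cor:no-action-discrete} presents $R(\tau)$ as the total space of a Zariski-locally trivial fibration over $\frM_\tau$ with fiber $\cM_\tau / \cG_\tau$ and free gauge action, so
\[
[R(\tau)] = [\frM_\tau] \cdot \frac{[\cM_\tau]}{[\cG_\tau]}
\]
in the appropriate localization of $K(\VarC)$. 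The base $[\frM_\tau]$ is assembled recursively from motives of irreducible character varieties of rank strictly less than $4$, which are available from Sections \ref{sec:rank2} and \ref{sec:rank3}, while $[\cM_\tau]$ and $[\cG_\tau]$ are produced directly from the combinatorial recipe of Section \ref{sec:explicit-formulas}.

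Then I sum the contributions $[R(\tau)]$ over all admissible $\tau$ and $\kappa$. By the invariance observed in Section \ref{sec:explicit-formulas}, $[R(\tau)]$ depends on $\kappa$ only through the multiplicities of its repeated eigenvalues, so the inner sum over $\kappa$ collapses into multiplication by the counts of admissible configurations. For $r = 4$ the closed forms of Sections \ref{subsec:coprime} and \ref{sec:number-components} give these counts as products $\binom{n}{\bullet}\binom{m}{\bullet}$ of multinomial coefficients, matching exactly the combinatorial prefactors in the statement. Subtracting the reducible piece from the total motive and then quotienting by the free $\PGL_4(k)$-conjugation action on the irreducible locus (which divides by $[\PGL_4(k)]$) delivers $[\frM^{\irr}(\Gamma, \SL_4(k))]$ in the advertised form.

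The principal obstacle is the sheer number of contributing types: over $350$, compared with $2$ for $\SL_2$ and $23$ for $\SL_3$. A second, more structural difficulty specific to Section \ref{subsec:8.2} is that when a two-dimensional isotypic component occurs with multiplicity two, the stratum $\cM_\tau$ becomes the complement of a union of Schubert cells in a Grassmannian-like variety, so its motive must be extracted by an inclusion-exclusion argument rather than read off a product formula. To cope with both issues I would organize the types by the shape of the semi-simple filtration, delegate the bookkeeping of the $350$-plus cases to the SageMath implementation of \cite{GPMScript}, and verify by hand only the small number of Schubert-complement cases where the recipe of Section \ref{sec:explicit-formulas} does not apply mechanically. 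Collecting all contributions as polynomials in $q$ then yields the six-term expression in the statement.
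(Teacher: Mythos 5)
Your proposal follows the paper's own strategy step for step: lift via Proposition~\ref{prop:cor:9.5}, stratify by eigenvalue configuration $\kappa$ and type $\tau$, compute $[R(\tau)]$ through the fibration of Proposition~\ref{prop:fibration} and Corollary~\ref{cor:no-action-discrete} together with the recipes of Section~\ref{sec:explicit-formulas}, multiply by the counts of Section~\ref{sec:count-comp}, and finally divide by $[\PGL_4]$; the enumeration of the $10$ surviving configurations into the seven of Section~\ref{subsec:8.1} and the three of Section~\ref{subsec:8.2} is likewise identical, and delegating the $350$-plus types to the SageMath implementation is exactly what the paper does. Two descriptive slips are worth flagging, though neither undermines the strategy: the lift lands in $\GL_4$-representations of $\ZZ_n \star \ZZ_m$ with prescribed determinants (the sets $R_4^{(\xi_1,\xi_2)}$), not $\SL_4$-representations of the free product; and the Schubert-complement phenomenon in Section~\ref{subsec:8.2} arises from \emph{one}-dimensional isotypic pieces of multiplicity two, not two-dimensional ones --- the latter case ($\xi = (\{(2,2)\})$, the configuration $\kappa = ((\epsilon_1,\epsilon_1,\epsilon_2,\epsilon_2),(\varepsilon_1,\varepsilon_1,\varepsilon_2,\varepsilon_2))$) is precisely the one that Proposition~\ref{prop:8.1} shows is entirely reducible and is therefore excluded from the count. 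Also, the displayed formula $[R(\tau)] = [\frM_\tau]\cdot [\cM_\tau]/[\cG_\tau]$ omits the $[\GL_r]$ factor present in (\ref{eqn:working-eqn3}), which should read $[R(\tau)] = [\frM_\tau^{\irr}] \cdot [\cM_\tau]\cdot[\GL_r]/[\cG_\tau]$.
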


In Section \ref{sec:motive-total}, we move from the motive of the moduli space of irreducible representations, $\frM^{\irr}(\Gamma, \SL_4(k))$, to the motive of the total character variety $\frM(\Gamma, \SL_4(k))$. For that purpose, we use the well-known fact that $\frM(\Gamma, \SL_4(k))$ parametrizes isomorphism classes of semi-simple representations and, therefore, the calculation can be reduced to combinations of the previous results for the character varieties of irreducible representations for all the ranks $1 \leq r \leq 4$. 

We end this section with some final words about the higher rank case $r > 4$. The results proved in this paper about the structure of the irreducible representations and their stratification according to the type of their semi-simple filtration are completely general and may be used in higher rank. However, two issues prevent the method to directly generalize to these cases. The first one is purely computational, since the number of types to be analyzed grows exponentially with rank, so to consider higher rank there is an extrinsic limitation imposed by the available computational power. The second one is much subtler and is hidden in Section \ref{sec:fixed-type} and Proposition \ref{prop:fibration}. The key point is that, more precisely, in general the fibration (\ref{eq:fibration-Xtau}) is not locally trivial in the Zariski topology. In the general case, this fibration presents non-trivial monodromy coming from the action on $\frM_\tau$ by permuting isomorphic irreducible representations. In ranks $r \leq 4$, the only repeated irreducible representations that may occur in $\frM_\tau$ are $1$-dimensional (we get rid of the case of two $2$-dimensional representations in Proposition \ref{prop:8.1}) and, since the moduli space of such representations is just a point, the action by permutations is trivial.

Despite these difficulties, the approach developed in this paper can be used to tackle the general rank case. A more involved analysis of this permutation action may be performed in order to understand the monodromy action, 
and from it the motive of the total space may be computed as in \cite{lomune}. To this extend, our approach is suitable for studying the higher rank case.

The results of this paper also hold for an algebraically closed field of positive characteristic $p>0$,
as long as $n,m,r$ are coprime with
$p$. The relevant point is that we take roots of unity and we count them.

\noindent \textbf{Acknowledgements.} The authors want to thank Eduardo Fern\'andez-Fuertes and Marina Logares for very useful conversations around the methods of this paper. We specially thank Carlos Florentino and Sean Lawton for pointing out some errors in a previous version of this manuscript and for references. The second author is partially supported by Project MINECO (Spain) PGC2018-095448-B-I00. 

\section{Moduli of representations and character varieties}\label{sec:character}

Fix an algebraically closed field $k$ of zero characteristic. Let $\G$ be a finitely generated group, and let $G$ be a reductive algebraic
group over $k$. A \textit{representation} of $\G$ in $G$ is a homomorphism $\rho: \G\to G$.
Consider a presentation $\G=\la \g_1,\ldots, \g_k | \{r_{\lambda}\}_{\lambda \in \Lambda} \ra$, where $\Lambda$ is the (possibly infinite) indexing set of relations of $\G$. Then $\rho$ is completely
determined by the $k$-tuple $(A_1,\ldots, A_k)=(\rho(\g_1),\ldots, \rho(\g_k))$
subject to the relations $r_\lambda(A_1,\ldots, A_k)=\id$, for all $\lambda \in \Lambda$. The space
of representations is
 \begin{eqnarray*}
 R(\G,G) &=& \Hom(\G, G) \\
  &=& \{(A_1,\ldots, A_k) \in G^k \, | \,
 r_\lambda(A_1,\ldots, A_k)=\id \}\subset G^{k}\, .
 \end{eqnarray*}
Therefore $R(\G,G)$ is an affine algebraic set.

\begin{remark}Even though $\Lambda$ may be an infinite set, without loss of generality we can suppose that $ R(\G,G)$ is defined by finitely many equations. Indeed, consider the collection of ideals generated by finitely many of these relations, partially ordered through inclusion. Since the coordinate ring of $G^k$ is Noetherian, such a collection has a maximal element, say, the ideal generated by the relations $r_{\lambda_1}-\Id, \ldots, r_{\lambda_s}-\Id$. This ideal coincides with the ideal generated by the whole set of relations since, otherwise, we could enlarge the set $r_{\lambda_1}-\Id, \ldots, r_{\lambda_s}-\Id$, contradicting the maximality.
\end{remark}

We say that two representations $\rho$ and $\rho'$ are
equivalent if there exists $g \in G$ such that $\rho'(\gamma)=g^{-1} \rho(\gamma) g$,
for every $\gamma \in \G$.
This produces a natural action of $G$ in $R(\G,G)$ by conjugation. The moduli space of representations
is the GIT quotient
 $$
 \frM(\G,G) = R(\G,G) \sslash G \, .
 $$
Recall that by definition of GIT quotient for an affine variety, if we write
$ R(\G,G)=\Spec A$, then 
$\frM (\G,G)=\Spec A^{G}$, where $A^G$ is the finitely generated $k$-algebra of invariants elements
of $A$ under the induced action of $G$.

From now on, let us focus on the groups $G= \GL_r(k)$ or $G = \SL_r(k)$. In this setting, a representation $\rho$ is said to be \textit{reducible} if there exists some proper linear
subspace $W\subset V$ such that for all $\g \in \G$ we have 
$\rho(\g)(W)\subset W$; otherwise $\rho$ is
\textit{irreducible}. This gives subsets 
 $$
 R^{\red}(\G, G), R^{\irr}(\G, G) \subset R(\G, G)
 $$ 
of reducible and irreducible
representations, that are closed and open subvarieties, respectively. The action of $G$ on the whole $R(\G, G)$ might be quite
involved. However, if $Z(G)$ denotes the center of $G$, which acts trivially on $R(\G, G)$, then the induced action of $G/Z(G)=\PGL_r(k)$ 
on $R^{\irr}(\G, G)$ is free by Schur lemma.

Note that if $\rho$ is reducible, then let $W \subset V$ be
an invariant subspace (of some dimension $s$), and consider a complement $V =W\oplus W'$. 
Let $\rho_1=\rho|_W$ and let $\rho_2$ be the induced representation on $W'$. Then we can write $\rho=\begin{pmatrix} \rho_1 & f\\
0& \rho_2\end{pmatrix}$, where $f: \G \to \Hom(W',W)$. Take $P_t=
\begin{pmatrix} t^{s-r}\,\Id & 0\\ 0& t^{s}\,\Id \end{pmatrix}$. 
Then $P_t^{-1}\rho P_t=\begin{pmatrix} \rho_1 & t^{r}f \\
0 & \rho_2\end{pmatrix} \to \hat\rho=\begin{pmatrix} \rho_1 & 0\\
0& \rho_2\end{pmatrix}$, when $t\to 0$. Therefore $\rho$ and $\hat\rho$
define the same point in the quotient $\frM(\G,G)$. Repeating this, we can
substitute any representation $\rho$ by some $\hat\rho=\bigoplus \rho_i$,
where all $\rho_i$ are irreducible representations. We call this process 
\emph{semi-simplification}, and $\hat\rho$ is called a semi-simple
representation; also $\rho$ and $\hat\rho$ are called
S-equivalent. The space $\frM(\G,G)$ parametrizes semi-simple representations
\cite[Thm.~ 1.28]{LuMa}. Analogously, we can also consider the moduli space of irreducible representations $\frM^{\irr}(\G, G) = R^{\irr}(\G, G) \sslash G$.

Suppose now that $k=\CC$. Given a representation $\rho: \G\to G$, we define its
\textit{character} as the map $\chi_\rho: \G\to \CC$,
$\chi_\rho(g)=\tr \rho (g)$. Note that two equivalent
representations $\rho$ and $\rho'$ have the same character.
There is a character map $\chi: R(\G,G)\to \CC^\G$, $\rho\mapsto
\chi_\rho$, whose image
 $$
 \frX(\G,G)=\chi(R(\G,G))
 $$
is called the \textit{character variety of $\G$}. Let us give
$\frX(\G,G)$ the structure of an algebraic variety. The traces $\chi_\rho$
span a subring $B\subset A$. Clearly $B\subset A^{G}$, and it can be proven that
$B$ is actually a finitely generated $\CC$-algebra \cite{LuMa}. Hence
there exists a collection $\g_1,\ldots, \g_a$ of elements of
$\G$ such that $\chi_\rho$ is determined by $\chi_\rho(\g_1),\ldots,
\chi_\rho(\g_a)$, for any $\rho$. Such a collection gives a map
 $\bar\chi :R(\G,G)\to \CC^a$, $\bar\chi(\rho)=(\chi_\rho(\g_1),\ldots, \chi_\rho(\g_a))$,
and $\frX(\G,G)\cong \bar\chi(R(\G,G))$. This endows $\frX(\G,G)$ with
the structure of an algebraic variety, which is independent of the chosen collection.
The natural algebraic map  
 $$
 \frM(\G,G)\to \frX(\G,G)
 $$ 
is an  isomorphism for $G = \SL_{r}(\CC)$, see \cite[Chapter 1]{LuMa}. This is the same as to say that 
$B=A^G$, that is, the ring of invariant polynomials is generated by characters. However, for other reductive groups this map may not be an isomorphism, as for $G = \mathrm{SO}_2(\CC)$ \cite[Appendix A]{Florentino-Lawton:2012}. For a general discussion on this issue, see \cite{Lawton-Sikora:2019}.

\subsection{The semi-simple filtration}\label{sec:semi-simple-filt}

Throughout this section, we will fix $G = \SL_r(k)$, for a fixed $r > 0$ and $k$ an algebraically closed field of zero characteristic, and $\Gamma$ a
finitely generated group. The conjugacy action of $\SL_r(k)$ on $R(\Gamma, \SL_r(k))$ extends to an action
of $\GL_r(k)$. Moreover, in order to shorten the notation, when the underlying field is understood we will denote
$\SL_r = \SL_r(k)$ and $\GL_r = \GL_r(k)$.

A very important feature of representations (indeed, of objects of an abelian category) is that they have attached
a canonical filtration, called the semi-simple filtration.

\begin{proposition}
Let $V$ be a representation of $\Gamma$. There exists an unique filtration of $\Gamma$-modules
$$
	0 = V_0 \subset V_1 \subset \ldots \subset V_i \subset \ldots \subset V_s = V,
$$
such that $\Gr_i(V_\bullet) = V_{i}/V_{i-1}$ is a maximal semi-simple subrepresentation of $V/V_{i-1}$.
\end{proposition}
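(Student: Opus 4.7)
The plan is to construct the filtration recursively as a socle filtration. First, I would define the \emph{socle} of any finite-dimensional $\Gamma$-module $W$ as the sum of all simple $\Gamma$-submodules of $W$, and verify that this sum is itself semi-simple (a sum of semi-simple modules is semi-simple, being a sum of simples) and that it is the unique maximal semi-simple submodule of $W$. Uniqueness goes as follows: any semi-simple submodule is a sum of simples and hence lies in the socle; conversely, if $W' \subset W$ is a maximal semi-simple submodule and $S \subset W$ is a simple submodule with $S \not\subset W'$, then $S \cap W' = 0$ by simplicity of $S$, so $W' + S = W' \oplus S$ would be a strictly larger semi-simple submodule, contradicting maximality. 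Thus $W'$ necessarily equals the socle.

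Having established the existence and uniqueness of a maximal semi-simple submodule for every finite-dimensional $\Gamma$-module, I would set $V_0 = 0$ and recursively define $V_i \subset V$ to be the preimage of $\mathrm{soc}(V/V_{i-1})$ under the quotient map $V \to V/V_{i-1}$. By construction $V_i/V_{i-1}$ is the unique maximal semi-simple subrepresentation of $V/V_{i-1}$. Since $V$ is finite-dimensional (representations land in $\SL_r(k)$, hence act on $k^r$) and since any non-zero finite-dimensional $\Gamma$-module has a non-zero socle (a submodule of minimal positive dimension is automatically simple), one has $V_i \supsetneq V_{i-1}$ whenever $V_{i-1} \neq V$. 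Therefore the chain stabilises at $V_s = V$ for some finite $s \le r$.

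For uniqueness, I would argue by induction on $i$. Given two filtrations $\{V_i\}$ and $\{V_i'\}$ satisfying the stated condition, $V_0 = V_0' = 0$. Assuming $V_{i-1} = V_{i-1}'$, both $V_i/V_{i-1}$ and $V_i'/V_{i-1}$ are maximal semi-simple subrepresentations of the same quotient $V/V_{i-1}$, and hence coincide by the uniqueness of the socle proved above. Lifting back to $V$ yields $V_i = V_i'$, closing the induction.

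The only conceptual ingredient is the uniqueness of the maximal semi-simple submodule of a finite-dimensional module; once that is in place, both existence (via recursive lifting) and uniqueness (via induction) follow at once. No serious obstacle is expected, since finite-dimensionality is automatic in this setting and ensures termination; the argument is essentially the standard construction of the socle series of a module of finite length.
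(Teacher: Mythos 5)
Your proof is correct and follows the same inductive scheme as the paper: both reduce the problem to showing that $V/V_{i-1}$ has a \emph{unique} maximal semi-simple subrepresentation and then recurse. The difference is in how this key uniqueness claim is established. You identify the maximal semi-simple submodule with the socle (the sum of all simple submodules) and observe that any maximal semi-simple submodule must contain every simple, hence contains the socle, hence equals it; this is the standard, canonical argument. The paper instead takes two maximal semi-simple submodules $V_1 = W_1 \oplus \cdots \oplus W_s$ and $V_1' = W_1' \oplus \cdots \oplus W_{s'}'$, arranges the decompositions so that $V_1 \cap V_1' = W_1 \oplus \cdots \oplus W_t$, and shows that $W_1 \oplus \cdots \oplus W_s \oplus W_{t+1}' \oplus \cdots \oplus W_{s'}'$ is a strictly larger semi-simple subrepresentation unless $V_1 = V_1'$, which contradicts maximality. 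Both are valid; your route is slightly cleaner because isolating the socle gives uniqueness in one stroke and avoids the (mildly hand-waved) bookkeeping of choosing compatible decompositions of $V_1$ and $V_1'$. The existence and termination arguments (non-zero socle, finite dimension bounds the length by $r$) agree with the paper's, so overall the two proofs are the same in substance with a more polished treatment of the one lemma that matters.
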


\begin{proof}
The existence is trivial, just take $V_1 \subset V$ to be a maximal semi-simple subrepresentation and repeat
inductively on $V/V_1$. For the uniqueness, suppose that $V_\bullet$ and $V'_\bullet$ are two such filtrations.
Then, we must have $V_1 = V'_1$ and working inductively the result follows. Indeed, write $V_1 = W_1 \oplus \ldots \oplus W_s$
and $V'_1 = W'_1 \oplus \ldots \oplus W'_{s'}$. Reordering if necessary,
suppose that $W_j = W'_j$ for $j = 1, 2, \ldots, t$, and $V_1\cap V_1'=W_1\oplus \ldots\oplus W_t$. 
Then $W_1 \oplus \ldots \oplus W_t \oplus W_{t+1}\oplus \ldots \oplus W_s\oplus W'_{t+1}
\oplus \ldots \oplus W'_{s'}$ is a semi-simple subrepresentation of $V$ (the sum is direct) containing $V_1$. By
maximality, it is equal to $V_1$, hence $s=s'$ and $V_1=V_1'$.
\end{proof}

Now, let $V$ be a $\Gamma$-representation and let $V_\bullet$ be its semi-simple filtration. Write the graded pieces into its
isotypic components 
 \begin{equation}\label{eqn:Wij}
 \Gr_i(V_\bullet) \cong \bigoplus_{j=1}^{s_i} W_{i,j}^{m_{i,j}}\, ,
 \end{equation}
with $W_{i,1}, \ldots, W_{i,s_i}$ non-isomorphic representations. These are called \emph{isotypic components} and
$m_{i,j}$ is their multiplicities. From this information, we can define the \emph{shape} of the representation as the tuple
\begin{align*}
	\xi &= \Big(\left\{(\dim W_{1,1}, m_{1,1}), \ldots, (\dim W_{1,r_1}, m_{1,r_1}) \right\}, \ldots, \\
	& \qquad \left\{(\dim W_{s,1},
	m_{s,1}), \ldots, (\dim W_{s,r_s}, m_{s,r_s})) \right\}\Big).
\end{align*}
Moreover, we can add spectral information to the shape. For each $\gamma \in \Gamma$, we denote by
$\sigma_{i,j}(\gamma) = \big\{\epsilon_{i,j,1}^{n_1}, \ldots, \epsilon_{i,j,l}^{n_l}\big\}$ the \emph{collection of eigenvalues}
(as a multiset with multiplicities, where the exponent means the number of times that the eigenvalue is repeated) 
of the action of $\gamma$ on $W_{i,j}$. This gives
 $$
 \sigma=(\sigma_{i,j}(\gamma) )_{\gamma\in \Gamma}
 $$
and the pair 
 $$
 \tau = (\xi, \sigma)
 $$ 
will be called the \emph{type} of the representation.

In such manner, the types of the representations induce a decomposition of the representation variety as
\begin{equation}\label{eq:strat-types}
	R(\Gamma, \SL_r) = \bigsqcup_\tau R(\tau),
\end{equation}
where $R(\tau)$ is the set of $\Gamma$-representations of type $\tau$. Observe that this decomposition is compatible with the 
action of $\GL_r$ by conjugation.

\begin{remark}\label{rmk:type-irr}
The semi-simple representations are the representations whose type has a single step, i.e.\ $s=1$. Furthermore, the irreducible
 representations correspond to the shape $\xi = (\left\{(r, 1)\right\})$, that is those whose only graded piece has a single 
 isotypic component of multiplicity $1$.
\end{remark}

\subsection{Grothendieck ring of varieties} \label{sec:KVar}

Let $\VarC$ be the category of quasi-projective varieties over $k$.
We denote by $K (\VarC)$ the \emph{Grothendieck ring} of
$\VarC$. This is the abelian group generated by elements $[Z]$, for
$Z \in \VarC$, subject to the relation $[Z]=[Z_1]+[Z_2]$ whenever $Z$
can be decomposed as a disjoint union $Z=Z_1\sqcup Z_2$ of a closed and
a Zariski open subset. The element $[Z]$ in $\VarC$ associated to a variety $Z$ is
called a \emph{motive}. 

There is a naturally defined product in $K (\VarC)$ given by $[Y]\cdot
[Z]=[Y\x Z]$. Note that if $\pi:Z\to Y$ is an algebraic fiber
bundle with fiber $F$, which is locally trivial in the Zariski
topology, then $[Z]=[F]\cdot [Y]$.

We denote by $q=[k]$ the \emph{Lefschetz motive} in $K (\VarC)$. 
We have cases that will be used later on:
 \begin{itemize}
 \item $[k^r]=q^r$,
 \item $[\GL_r]=(q^r-1)(q^r-q) \cdots (q^r-q^{r-1})$,
 \item $[\SL_r]= [\PGL_r] =(q^r-1)(q^r-q) \cdots (q^r-q^{r-2})q^{r-1}$.
\end{itemize}

\section{Representation varieties of torus knots} \label{sec:torus}

Let $T^2=S^1 \times S^1$ be the $2$-torus and consider the standard embedding
$T^2\subset S^3$. Let $n,m$ be a pair of coprime positive integers. Identifying
$T^2$ with the quotient $\RR^2/\ZZ^2$, the image of the straight line $y=\frac{n}{m}
x$ in $T^2$ defines the \textit{torus knot} of type $(n,m)$, which we shall denote
as $K_{n,m}\subset S^3$ (see \cite[Chapter 3]{Rolfsen}). It is known that the fundamental group of the exterior of the torus knot is
 \begin{equation}\label{eqn:torus}
  \G_{n,m}= \pi_1({S^3- K_{n,m}}) \cong \la x,y \, | \, x^n= y^m \,\ra \,.
 \end{equation}
 
The aim of this paper is to describe the representation variety $R(\G_{m,n}, \SL_r(k))$, for $k$ an algebraically closed field of zero characteristic.
We introduce the notation
 \begin{equation}\label{eqn:cXr}
 \cR_r = R(\G_{n,m}, \SL_r),  
 \end{equation}
dropping the reference to $n,m$. Clearly $ \cR_1$ consists of one point. 
We will denote the irreducible and reducible subvarieties of representations by 
 $ \cR_r^{\irr}$ and $ \cR_r^{\red}$. 
 A representation $\rho: \Gamma_{m,n} \to \SL_r$ is completely determined by a pair of matrices $(A,B) = (\rho(x), \rho(y))$.

\begin{lemma}\label{lem:red}
Suppose that $\rho=(A,B)\in \cR_r^{\irr}$. Then $A^n=B^m=\varpi\, \Id$, for 
some $\varpi$ an $r$-th root of unit.
\end{lemma}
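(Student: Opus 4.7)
The plan is to exploit the defining relation of $\Gamma_{n,m}$ together with Schur's lemma. Set $z = x^n = y^m \in \Gamma_{n,m}$; since $z$ commutes with both generators $x$ and $y$, it lies in the center of $\Gamma_{n,m}$. Consequently, for any representation $\rho$, the matrix $\rho(z) = A^n = B^m$ commutes with every element of the image $\rho(\Gamma_{n,m})$.

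First I would observe that $A^n = B^m$ follows directly from $\rho$ being a group homomorphism and the relation $x^n = y^m$ in $\Gamma_{n,m}$. Call this common matrix $C$. Since $C$ commutes with $A$ and $B$, and these generate the image of $\rho$, the matrix $C$ commutes with every matrix in $\rho(\Gamma_{n,m})$.

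Next I would invoke Schur's lemma: because $\rho$ is irreducible, the commutant of $\rho(\Gamma_{n,m})$ in $\End(k^r)$ consists only of scalar multiples of the identity (this uses that $k$ is algebraically closed, as it is assumed throughout). Hence $C = \varpi \cdot \Id$ for some scalar $\varpi \in k$.

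Finally, since $\rho$ lands in $\SL_r(k)$, we have $\det(A) = 1$, so $\det(A^n) = 1$. But $\det(\varpi \cdot \Id) = \varpi^r$, so $\varpi^r = 1$, meaning $\varpi$ is an $r$-th root of unity. The argument is essentially a one-line application of Schur's lemma, so I do not expect any serious obstacle; the only subtlety worth noting is that one must invoke algebraic closedness of $k$ to guarantee that the commutant is exactly the scalars, which is already part of the standing hypotheses.
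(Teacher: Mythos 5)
Your proof is correct and follows essentially the same route as the paper's: identify $P = A^n = B^m$ as a matrix commuting with the whole image of $\rho$, invoke Schur's lemma (using irreducibility and algebraic closedness) to conclude $P = \varpi\,\Id$, and then use $\det P = 1$ to get $\varpi^r = 1$. The only cosmetic difference is that you frame the commutation via centrality of $z = x^n = y^m$ in $\Gamma_{n,m}$, whereas the paper simply checks $PA = AP$ and $PB = BP$ directly.
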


\begin{proof}
Set $P = A^n = B^m$. We have that $PA = A P$ and $PB = B P$, so $P^{-1} \rho P = \rho$. 
Thus, $P$ is a $\Gamma$-equivariant automorphism of $\rho$ which, by Schur lemma, implies that 
$P = \varpi\, \Id$ for some $\varpi \in k^*$. If $(A,B)\in \cR_r$, we must have $\varpi^r = \det (\varpi \, \Id) = 1$.
\end{proof}

We denote by $\mu_r$ the set of $r$-th roots of unity.
Given $\varpi \in \mu_r$, denote
 $$
 \cR_r(\varpi)= \{ (A,B) \in \cR_r  \,|\, A^n=B^m=\varpi \, \Id\}.
 $$
Lemma \ref{lem:red} implies that there is a stratification (into disjoint subsets)
 \begin{equation}\label{eqn:yyy}
 \cR_r=\bigsqcup_{\varpi\in \mu_r} \cR_r(\varpi).
 \end{equation}

We also introduce the group $\ZZ_n \star \ZZ_m$, its representation variety 
$R(\ZZ_n \star \ZZ_m, \GL_r)$, and the sets 
 $$
 R_r= R_r^{(\xi_1,\xi_2)} = \{(A,B)\in R(\ZZ_n \star \ZZ_m, \GL_r) | \det A=\xi_1,\det B=\xi_2\},
  $$
for $\xi_1,\xi_2\in k^*$ (we usually drop $\xi_1,\xi_2$ from the notation). 
If we change $\xi_1,\xi_2$ to $\xi_1'=\lambda_1\xi_1$, $\xi_2'=\lambda_2\xi_2$, where
$\lambda_1^{1/r}\in \mu_n$, $\lambda_2^{1/r}\in \mu_m$, the variety $R_r$ remains the same (isomorphic).

\begin{proposition} \label{prop:cor:9.5} 
There is an isomorphism
$$
 \cR_r^{\irr}(\varpi) \cong R_r^{\irr} \subset R_r^{(\xi_1,\xi_2)}\, ,
 $$
for $\xi_1=\varpi^{-r/n}$, $\xi_2=\varpi^{-r/m}$. 
\end{proposition}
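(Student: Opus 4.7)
The plan is to exhibit the isomorphism as the rescaling map $\Phi\colon (A,B)\mapsto (\lambda_1 A, \lambda_2 B)$, for a fixed choice of scalars $\lambda_1,\lambda_2\in k^*$ satisfying $\lambda_1^n=\varpi^{-1}$ and $\lambda_2^m=\varpi^{-1}$ (symbolically, $\lambda_1=\varpi^{-1/n}$ and $\lambda_2=\varpi^{-1/m}$). The crucial input that makes this rescaling well-defined is Lemma \ref{lem:red}, which guarantees that every irreducible $(A,B)\in\cR_r^{\irr}(\varpi)$ satisfies $A^n=B^m=\varpi\,\Id$.

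First I would verify that $\Phi$ sends $\cR_r^{\irr}(\varpi)$ into $R_r^{(\xi_1,\xi_2)}$. Setting $\tilde A=\lambda_1 A$ and $\tilde B=\lambda_2 B$, one computes $\tilde A^n=\lambda_1^n A^n=\varpi^{-1}\cdot \varpi\,\Id=\Id$ and similarly $\tilde B^m=\Id$, so $(\tilde A,\tilde B)$ indeed defines a representation of $\ZZ_n\star\ZZ_m$. The determinants are $\det\tilde A=\lambda_1^r\det A=\lambda_1^r=\xi_1$ and $\det\tilde B=\lambda_2^r=\xi_2$, as required. Since multiplying by a non-zero scalar preserves the invariant subspace lattice of a linear map, $(\tilde A,\tilde B)$ is irreducible if and only if $(A,B)$ is, so $\Phi$ restricts to a map $\cR_r^{\irr}(\varpi)\to R_r^{\irr}$.

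Next I would exhibit the inverse $\Psi\colon (\tilde A,\tilde B)\mapsto (\lambda_1^{-1}\tilde A,\lambda_2^{-1}\tilde B)$ and run the symmetric checks: the defining equality $\xi_1=\lambda_1^r$ yields $\det(\lambda_1^{-1}\tilde A)=\lambda_1^{-r}\xi_1=1$, and likewise for $B$, so the image lies in $\SL_r\times \SL_r$; the identity $(\lambda_1^{-1}\tilde A)^n=\lambda_1^{-n}\,\Id=\varpi\,\Id=(\lambda_2^{-1}\tilde B)^m$ certifies that we obtain a well-defined element of $\cR_r^{\irr}(\varpi)$. Since both $\Phi$ and $\Psi$ are multiplication-by-scalar maps, they are algebraic morphisms and are manifestly mutually inverse, yielding the desired isomorphism of varieties.

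The only subtlety worth flagging is that $\lambda_1$ is determined only up to multiplication by an element of $\mu_n$, and analogously for $\lambda_2$; different choices produce $\xi_1,\xi_2$ differing, respectively, by an $r$-th power of an $n$-th (resp.\ $m$-th) root of unity. This is precisely the freedom allowed by the remark preceding the proposition, under which $R_r^{(\xi_1,\xi_2)}$ is unchanged up to isomorphism. Apart from this bookkeeping, the argument is a direct verification and I do not anticipate any real obstacle.
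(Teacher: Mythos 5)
Your proof is correct and follows essentially the same approach as the paper: rescale by scalars $\lambda_1,\lambda_2$ that are $n$-th and $m$-th roots of $\varpi^{-1}$ (the paper calls them $\eta_1,\eta_2$), check that the determinants come out to $\xi_1,\xi_2$, and observe that scalar multiplication preserves the invariant-subspace lattice, hence irreducibility. You are in fact slightly more careful than the paper, which asserts without elaboration that the image is exactly the irreducible locus, whereas you spell out the inverse map and confirm it lands back in $\cR_r^{\irr}(\varpi)$.
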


\begin{proof}
Since $A^n = B^m = \varpi\, \Id$, if we fix $n$-th and $m$-th roots of $\varpi^{-1}$, 
denoted by $\eta_1$ and $\eta_2$ respectively, then we have
that $(\eta_1A)^n = (\eta_2B)^m = \Id$. Hence, the pair $(\eta_1A, \eta_2 B)$ defines a representation of $R(\ZZ_n \star \ZZ_m, \GL_r)$.
Now, observe that $\det (\eta_1A) = \eta_1^r \det(A) = \eta_1^r = \xi_1$, and similarly $\det (\eta_2B) = \xi_2$, so the pair $(\eta_1A, \eta_2 B)$ actually lies in $R_r^{(\xi_1,\xi_2)}$.
Clearly, the image of this assignment $(A, B) \mapsto (\eta_1A, \eta_2 B)$ coincides with the subset of irreducible representations.

\end{proof}

\begin{corollary}
If $\rho=(A,B)\in \cR_r^{\irr}$, then $A$ and $B$ are diagonalizable.
\end{corollary}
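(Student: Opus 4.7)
The plan is to deduce diagonalizability directly from the constraint already established in Lemma \ref{lem:red}, namely that $A^n = B^m = \varpi\,\Id$ for some $r$-th root of unity $\varpi$. In characteristic zero, this is exactly the annihilating-polynomial criterion for diagonalizability.

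First I would observe that $A$ satisfies the polynomial $P(x) = x^n - \varpi \in k[x]$. Since $\varpi \neq 0$ and $k$ has characteristic zero, $P$ is separable: its derivative $P'(x) = nx^{n-1}$ shares no common root with $P$, because the only root of $P'$ is $x=0$, which is not a root of $P$. Therefore $P$ has $n$ distinct roots in the algebraically closed field $k$ (explicitly, the $n$-th roots of $\varpi$). Since the minimal polynomial of $A$ divides $P$, it too has distinct roots, which is equivalent to $A$ being diagonalizable. The identical argument applied to $B$ and $x^m - \varpi$ shows that $B$ is diagonalizable.

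Alternatively, one can route through Proposition \ref{prop:cor:9.5}: after rescaling $(A,B) \mapsto (\eta_1 A, \eta_2 B)$ with $\eta_1^n = \eta_2^m = \varpi^{-1}$, the matrix $\eta_1 A$ has order dividing $n$ and $\eta_2 B$ has order dividing $m$, so both are finite-order elements of $\GL_r(k)$ in characteristic zero, and finite-order matrices are always diagonalizable. Diagonalizability is preserved under scalar multiplication, so $A$ and $B$ are diagonalizable.

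There is no real obstacle here; the statement is essentially immediate once Lemma \ref{lem:red} is in hand, and the only subtlety to record is the use of the characteristic-zero hypothesis (or, equivalently, the coprimality of $n,m,r$ with the characteristic, as mentioned at the end of the introduction) to guarantee separability of $x^n - \varpi$ and $x^m - \varpi$.
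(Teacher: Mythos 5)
Your proposal is correct. Your primary argument (annihilating polynomial $x^n - \varpi$ is separable in characteristic zero, hence the minimal polynomial of $A$ has distinct roots) is a genuinely more elementary route than the paper's: it needs only Lemma~\ref{lem:red} and bypasses Proposition~\ref{prop:cor:9.5} entirely. The paper instead rescales via Proposition~\ref{prop:cor:9.5} so that $\eta_1 A$ and $\eta_2 B$ generate representations of the finite abelian groups $\ZZ_n$ and $\ZZ_m$, then invokes complete reducibility and one-dimensionality of irreducibles for finite abelian groups; this is exactly the ``alternative'' you sketch in your second paragraph. Both routes give the same conclusion, and both correctly identify the characteristic-zero (or coprimality) hypothesis as the point where separability is used. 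Your first argument is slightly cleaner as a standalone proof; the paper's phrasing fits its surrounding narrative, which is organized around lifting to $\ZZ_n \star \ZZ_m$ and exploiting finite-group representation theory.
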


\begin{proof}
Any representation of a finite group is completely reducible and, if the group is abelian, the irreducible representations
are $1$-dimensional.
\end{proof}

\begin{remark}
$A$ and $B$ are diagonalizable, but they cannot be diagonalizable in the same basis. Indeed, they cannot share even an eigenvector 
since, otherwise, the representation $\rho$ would be reducible.
\end{remark}

By Proposition \ref{prop:cor:9.5}, we focus on $R_{r}$.
  Observe that the possible eigenvalues of $(A,B) \in R_{r}$ form a discrete set, since $A^n=\Id$ implies that the eigenvalues of $A$
 are $n$-th roots of unity, and similarly for $B$.
 This allows a refinement
 of the stratification of (\ref{eq:strat-types}). Let $\rho = (A,B) \in R_{r}$ and let $\bm{\epsilon} = \big\{\epsilon_1^{a_1},
  \ldots, \epsilon_p^{a_p}\big\}$ and $\bm{\varepsilon}=\big\{\varepsilon_1^{b_1}, \ldots, \varepsilon_q^{b_q}\big\}$ be 
  the eigenvalues of $A$ and $B$ respectively (as multisets with repetitions), that will be collected in the \emph{configuration of 
  eigenvalues} 
  \begin{equation}\label{eqn:kappa}
  \kappa = (\bm{\epsilon}, \bm{\varepsilon}).
  \end{equation}
This implies that $R_{r}$ can be decomposed as a finite disjoint  union
\begin{equation}\label{eq:dec-irred-kappa}
 R_{r} = \bigsqcup_\kappa R_{\kappa}, \quad \textrm{where }\, \,  R_{\kappa} = \bigsqcup_{\tau \in \cT_\kappa} R(\tau).
\end{equation}
Here $R(\tau) \subset R_{r}$ is, as in (\ref{eq:strat-types}), the collection of representations of $R_{r}$ of type 
$\tau=(\xi,\sigma)$, and $\cT_\kappa$ is the collection of types whose configuration of eigenvalues $\sigma$ is taken from $\kappa$.

From this decomposition, we can understand the virtual class of $\cR_r^{\irr}(\varpi) \subset R_{r}$. Observe that, if 
$\cT_\kappa^* \subset \cT_\kappa$ are the types whose shape does not correspond to an irreducible representation, then 
$R_{\kappa}^{\red} = \bigsqcup\limits_{\tau \in \cT_\kappa^*} R(\tau)$ is the collection of reducible representations of $R_{\kappa}$,
and the irreducible representations are $R_{\kappa}^{\irr} = R_{\kappa}-R_{\kappa}^{\red}$. Hence, we have that
 \begin{equation}\label{eqn:working-eqn1}
	\cR_r^{\irr}(\varpi) = R_r^{\irr} =
	\bigsqcup_{\kappa} R_{\kappa}^{\irr} = \bigsqcup_{\kappa} \left(R_{\kappa} - R_{\kappa}^{\red}\right) 
	= \bigsqcup_{\kappa} \Big(R_{\kappa} - \bigsqcup_{\tau \in \cT_\kappa^*} R(\tau)\Big).
 \end{equation}

The virtual class $[R_{\kappa}]$ is very easy to compute. We have that
$$
	R_{\kappa} = \left(\GL_r \cdot \Sigma_{\bm{\epsilon}}\right) \times \left(\GL_r \cdot \Sigma_{\bm{\varepsilon}}\right),
$$
where $\Sigma_{\bm{\epsilon}}$ is the diagonal matrix with eigenvalues $\bm{\epsilon}$ and $\GL_r\cdot  \Sigma_{\bm{\epsilon}}$ is 
its orbit under the $\GL_r$-action by conjugation, and analogously for $\bm{\varepsilon}$. The stabilizers of $\Sigma_{\bm{\epsilon}}$ 
and $\Sigma_{\bm{\varepsilon}}$ under this action are $\GL_{a_1} \times \ldots \times \GL_{a_p}$ and $\GL_{b_1} \times \ldots 
\times \GL_{b_q}$, respectively. Therefore, we have that
  \begin{equation}\label{eqn:working-eqn2}
	\left[R_{\kappa}\right] = \frac{[\GL_r]}{[\GL_{a_1}] \cdots [\GL_{a_p}]} \cdot \frac{[\GL_r]}{[\GL_{b_1}] \cdots [\GL_{b_q}]}.
 \end{equation}
 
Using (\ref{eqn:working-eqn1}), we get that, in order to compute the virtual class of $\cR_r^{\irr}(\varpi)$, it is enough to compute the 
virtual classes of $R(\tau)$, for all $\kappa$ and $\tau \in \cT_\kappa^*$. As we will show in the following section, this 
amounts to a combinatorial problem and the knowledge of $[\cR_s^{\irr}(\varpi)]$ for $s < r$, so the computation can be 
performed recursively.

From this computation, one can also compute the virtual class of $\frM^{\irr}_r = R_r^{\irr} \sslash \SL_r$. Indeed, the action of $\PGL_r = \SL_r/Z(\SL_r)$ is free and has closed orbits on the irreducible representations. Hence, working analogously to Proposition 7.3 of \cite{GP:2018} for arbitrary rank, we get that
$$
	[\frM^{\irr}_r] = \frac{[R_r^{\irr}]}{[\PGL_r]}.
$$
This action respects the stratification (\ref{eq:dec-irred-kappa}) so defining $\frM_\kappa^{\irr} = R_\kappa^{\irr} \sslash \SL_r$ we have
\begin{equation}\label{eq:m-kappa}
	[\frM^{\irr}_r] = \sum_\kappa [\frM^{\irr}_\kappa] = \sum_\kappa \frac{[R_\kappa^{\irr}]}{[\PGL_r]}.
\end{equation}

\begin{remark}\label{rmk:reducible}
Consider a configuration of eigenvalues $\kappa = (\bm{\epsilon}, \bm{\varepsilon})$ with multiplicities $a_i$ and $b_j$ 
respectively. If there exists $i,j$ such that $a_i + b_j > r$, then $R_\kappa^{\irr} = \emptyset$. Indeed, if $\rho = (A, B)
 \in R_\kappa$, then $A$ has an eigenspace $V$ of dimension $a_i$, and $B$ has another eigenspace $W$ of dimension $b_j$. But since
  $a_i + b_j > r$, $V \cap W \neq 0$ and thus, there exists at least a common eigenvector to $A$ and $B$ so they are reducible.
  Hence we can discard such $\kappa$ in the formula (\ref{eqn:working-eqn1}).
\end{remark}

\section{Representations of fixed type}\label{sec:fixed-type}

Throughout this section, we will fix a configuration of eigenvalues $\kappa = (\bm{\epsilon}, \bm{\varepsilon})$ and a type $\tau \in \cT_\kappa$,
 and we will study the collection of $\ZZ_n \star \ZZ_m$-representations $R(\tau)$. Compatible with the description of the torus
  knot (\ref{eqn:torus}), we will denote by $x, y \in \ZZ_n \star \ZZ_m$ the generators of the $\ZZ_n$ and $\ZZ_m$ parts, respectively.

Following the notation of (\ref{eqn:Wij}), let $m_{i,j}$ be the multiplicity of the isotypic piece $W_{i,j}$ of the
 semi-simple filtration of the type $\tau$. The corresponding eigenvalues of these pieces are denoted 
 $\kappa_{i,j} = (\sigma_{i,j}(x), \sigma_{i,j}(y))$. Then we take
\begin{equation}\label{eq:cI}
	\cI(\tau) = \prod_{i = 1}^s \prod_{j=1}^{s_i} \Sym^{m_{i,j}} \left(R^{\irr}_{\kappa_{i,j}}\right).
\end{equation}

We have a map $\Gr_\bullet: R(\tau) \to \cI(\tau)$ given by $\rho \mapsto \left(\Gr_i(\rho)\right)_i$, where the isotypic
 components of the graded representations are grouped together into the symmetric product. We will call this map the
  \emph{semi-simplification}.

The fiber of the semi-simplification is given by the ways in which this irreducible part can be completed with off-diagonal
 maps. Decompose the graded pieces of the type into its irreducible components as
$$
	\Gr_i(V_\bullet) = U_{\nu_{i-1} + 1} \oplus \ldots \oplus U_{\nu_{i}}\, ,
$$
for an increasing sequence $0 = \nu_0 < \cdots < \nu_{i-1} < \nu_i < \cdots$ This means that the isotypic components 
$W_{i,j}$ will be isomorphic to $m_{i,j}$ of the pieces $U_{\alpha}$, for $\nu_{i-1} < \alpha \leq \nu_i$. Consider $\varrho 
= \bigoplus \varrho_{\alpha} \in \cI(\tau)$, with $\varrho_{\alpha}$ the irreducible action on $U_{\alpha}$. When restricted 
to $\ZZ_m \subset \ZZ_n \star \ZZ_m$, the $\ZZ_m$-module $U_{\alpha}$ is semi-simple with $1$-dimensional irreducible pieces. 
Let $u_{\alpha ,1}, \ldots, u_{\alpha ,d_\alpha}$ be a basis of $U_{\alpha}$, such that $\langle u_{\alpha, l}\rangle$ is an 
irreducible $\ZZ_m$-representation with character $\varepsilon_{\alpha, l}$.

Now, consider the vector space
$$
	\cM_0 = \bigoplus_{\alpha < \beta} \Hom_k(U_{\beta}, U_{\alpha}).
$$
Given $M \in \cM_0$, in the previous basis it can be seen as a collection of maps $M_{\alpha\beta}: U_\beta \to U_\alpha$. Let us consider the induced representation $\varrho_M: \ZZ \star \ZZ \to \GL_r$ by
$$
	\varrho_M(x) (u_{\beta, l}) = \varrho_{\beta}(x) (u_{\beta, l}), \quad
	\varrho_M(y)(u_{\beta, l}) = \varepsilon_{\beta, l}\, u_{\beta, l} + \sum_{\alpha} M_{\alpha\beta}(u_{\beta, l}).
$$
Here $x$ and $y$ denote the generators of each of the factors of $\ZZ \star \ZZ$. Observe that $\varrho_M(x)^n = \bigoplus_\beta \varrho_{\beta}(x)^n = \Id$, but $\varrho_M(y)^m$ may not be the identity map. Hence, $\varrho_M$ can be seen as a representation $\varrho_M: \ZZ_n \star \ZZ \to \GL_r$.

In other words, in a matrix notation in the basis above, the representation is given by a matrix $A$ with diagonal blocks 
$\varrho_\beta(x)$, and by a matrix $B$ whose diagonal blocks are diagonal matrices $B_\beta = \mathrm{diag}(\varepsilon_{\beta, 1},
\ldots, \varepsilon_{\beta, d_\beta})$
and there are upper-diagonal blocks $M_{\alpha \beta}$, $\alpha < \beta$.

To address the problem that $\varrho_M$ is not a representation of $\ZZ_n \star \ZZ_m$, we consider the subset
$$
	{\cM}_1 = \left\{M \in \cM_0 \,|\, \varrho_M(y)^m = \Id \right\}.
$$
This defines a closed subvariety ${\cM}_1 \subset \cM_0$ of admissible off-diagonal maps,
i.e.\ $M \in \cM_0$ such that $\varrho_M$ is a $\ZZ_n \star \ZZ_m$-representation.

This space is actually a linear subspace that can be easily characterized. For $\alpha < \beta$, write into its components
 $$
 M_{\alpha \beta}=\left( m^{ij}_{\alpha \beta}\right)_{i,j}\, .
 $$
We have the following characterization.

\begin{proposition}\label{prop:values-fixed-M}
 If the matrix $M \in \cM_1$ then for $\alpha < \beta$ and $i,j$ such that $\varepsilon_{\alpha, i}=\varepsilon_{\beta, j}$ then 
 $m^{ij}_{\alpha \beta}=c^{ij}_{\alpha \beta}$, a specific value depending on $M$. Otherwise $m^{ij}_{\alpha \beta}$ is free. 
 The converse also holds.  
\end{proposition}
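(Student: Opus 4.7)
My plan is to interpret $\cM_1$ via diagonalizability of $B := \varrho_M(y)$. Writing $B = D + N$ in the ordered basis $\{u_{\beta,l}\}$, where $D = \diag(\varepsilon_{\beta,l})$ is diagonal and $N$ carries the off-diagonal entries $m^{il}_{\alpha\beta}$ for $\alpha < \beta$, I first observe that since the diagonal of $B$ already lies in $\mu_m$, the polynomial $t^m - 1$ annihilates $B$ if and only if $B$ is diagonalizable (because $t^m-1$ has simple roots in characteristic zero, so the minimal polynomial must also have simple roots). Thus $M \in \cM_1$ is equivalent to $B$ being diagonalizable.

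To make this concrete, I would attempt to construct, for each index $(\beta,l)$, an eigenvector of $B$ of the unitriangular form
\[
v_{\beta,l} = u_{\beta,l} + \sum_{(\alpha,i) : \alpha < \beta} \psi^{(\beta,l)}_{(\alpha,i)}\, u_{\alpha,i},
\]
with eigenvalue $\varepsilon_{\beta,l}$. Expanding $(B - \varepsilon_{\beta,l}\Id) v_{\beta,l} = 0$ and comparing the coefficient of each $u_{\alpha,i}$ gives
\[
(\varepsilon_{\alpha,i} - \varepsilon_{\beta,l})\, \psi^{(\beta,l)}_{(\alpha,i)} = -\, m^{il}_{\alpha\beta} - \sum_{\alpha < \alpha' < \beta,\, i'} m^{ii'}_{\alpha\alpha'}\, \psi^{(\beta,l)}_{(\alpha',i')}.
\]
I would adopt the normalization $\psi^{(\beta,l)}_{(\alpha,i)} = 0$ whenever $\varepsilon_{\alpha,i} = \varepsilon_{\beta,l}$, which is admissible since the left-hand side then vanishes. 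A downward induction on $\alpha$ solves uniquely for $\psi^{(\beta,l)}_{(\alpha,i)}$ in the mismatched case, while in the matched case ($\varepsilon_{\alpha,i} = \varepsilon_{\beta,l}$) the equation collapses to the forcing condition
\[
m^{il}_{\alpha\beta} = -\sum_{\alpha < \alpha' < \beta,\, i'} m^{ii'}_{\alpha\alpha'}\, \psi^{(\beta,l)}_{(\alpha',i')} \;=:\; c^{il}_{\alpha\beta},
\]
whose right-hand side is a polynomial in entries $m^{ii'}_{\alpha\alpha'}$ of strictly smaller block-distance together with the already-determined $\psi^{(\beta,l)}_{(\alpha',i')}$'s, and does not involve the unknown $m^{il}_{\alpha\beta}$ itself.

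The main obstacle I anticipate is verifying \emph{global consistency}: the same matched entry $m^{il}_{\alpha\beta}$ might a priori be forced to different values by the eigenvector constructions for different $(\beta',l')$, and one must rule out overdetermination. Careful bookkeeping handles this. The entry $m^{il}_{\alpha\beta}$ can appear in the equation for $v_{\beta',l'}$ with $\beta' > \beta$ only inside the combination $m^{il}_{\alpha\beta}\, \psi^{(\beta',l')}_{(\beta,l)}$; by the normalization this product vanishes whenever $\varepsilon_{\beta,l} = \varepsilon_{\beta',l'}$, and when $\varepsilon_{\beta,l} \neq \varepsilon_{\beta',l'}$ the matching assumption $\varepsilon_{\alpha,i} = \varepsilon_{\beta,l}$ forces $\varepsilon_{\alpha,i} \neq \varepsilon_{\beta',l'}$, placing the equation for $v_{\beta',l'}$ at the index $(\alpha,i)$ in the mismatched branch, which only determines $\psi^{(\beta',l')}_{(\alpha,i)}$ without constraining $m^{il}_{\alpha\beta}$. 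Once each matched entry is set to its forced value $c^{il}_{\alpha\beta}$, the collection $\{v_{\beta,l}\}$ is a linearly independent eigenbasis (being unitriangular in the ordered basis $\{u_{\beta,l}\}$), so $B$ is diagonalizable and hence $M \in \cM_1$. This proves both directions simultaneously and exhibits $\cM_1$ as an affine subspace of $\cM_0$ freely parametrized by the mismatched entries, as claimed.
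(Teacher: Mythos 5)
Your reformulation of $\cM_1$ as the locus where $B=\varrho_M(y)$ is diagonalizable (valid because $t^m-1$ has simple roots in characteristic zero and the diagonal of $B$ already consists of $m$-th roots of unity) is correct and genuinely different from the paper's method, which expands the $(\alpha,\beta)$-block of $B^m=\Id$ and isolates the coefficient $\sum_{t=1}^m\varepsilon_{\alpha,i}^{t-1}\varepsilon_{\beta,j}^{m-t}$ of $m^{ij}_{\alpha\beta}$, observing it vanishes exactly when $\varepsilon_{\alpha,i}\neq\varepsilon_{\beta,j}$. Your eigenvector construction handles the \emph{converse} cleanly: once the matched entries are set to $c^{il}_{\alpha\beta}$, the $v_{\beta,l}$ form a unitriangular eigenbasis, so $B$ is diagonalizable and $M\in\cM_1$. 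The paper's computation, by contrast, handles the \emph{forward} direction most directly, since for matched $(i,j)$ the nonzero coefficient $m\varepsilon_{\alpha,i}^{-1}$ solves for $m^{ij}_{\alpha\beta}$ outright.

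The gap is in your forward direction, and your claim that the construction ``proves both directions simultaneously'' overstates it. Diagonalizability of $B$ only guarantees the existence of \emph{some} eigenbasis; it does not a priori deliver the unitriangular eigenbasis with your particular $\psi=0$ normalization at matched positions, which is what your forcing equations $m^{il}_{\alpha\beta}=c^{il}_{\alpha\beta}$ encode. If $B$ is diagonalizable but the matched entries do not equal $c^{il}_{\alpha\beta}$, the vectors $v_{\beta,l}$ you build simply fail to be eigenvectors, and nothing you write rules this out. To close the gap you need an additional inductive argument, for instance: $B$ restricted to the flag $F_\beta=\bigoplus_{\alpha\le\beta}U_\alpha$ is diagonalizable, the $\varepsilon_{\beta,l}$-eigenspace of $B|_{F_\beta}$ has one more dimension than that of $B|_{F_{\beta-1}}$, its projection onto $U_\beta$ is the line $\langle u_{\beta,l}\rangle$, and the remaining freedom in the matched coefficients can be used to achieve your normalization; only then do the forcing identities follow. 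Note that your ``global consistency'' paragraph addresses a different concern — whether the forcing system is overdetermined — which you settle correctly, but it does not supply the missing implication $\cM_1\subseteq\cM_1'$.
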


\begin{proof}
 When we take the power $B^m=\Id$, we look at the upper-diagonal block for $\alpha<\beta$. This is equal to
  $$
   \sum_{\alpha \leq \gamma_1\leq \ldots \leq \gamma_{m-1}\leq \beta}  M_{\alpha \gamma_1}M_{\gamma_1\gamma_2}\cdots M_{\gamma_{m-1}\beta} =0,
   $$
where we write $M_{\beta\beta}=B_\beta$. Looking at the terms containing $M_{\alpha\beta}$, we have
 \begin{equation}\label{eqn:values-fixed-M}
 \sum_{t=1}^m B_\alpha^{t-1} M_{\alpha\beta} B_\beta^{m-t} = M',
 \end{equation}
where $M'$ is a fixed matrix depending on the matrices $M_{\g\g'}$ with smaller ``distance''  between $\g,\g'$.
 The coefficients of the left hand matrix are
  $$
  \sum_{t=1}^m m^{ij}_{\alpha\beta} \varepsilon_{\alpha, i}^{t-1} \varepsilon_{\beta, j}^{m-t}= \left\{
  \begin{array}{ll}m^{ij}_{\alpha\beta} \frac{\varepsilon_{\alpha, i}^m- \varepsilon_{\beta, j}^m}{\varepsilon_{\alpha ,i}- \varepsilon_{\beta, j}}=0
   & \text{ if }\varepsilon_{\alpha, i}\neq \varepsilon_{\beta, j} \\[8pt]
  m^{ij}_{\alpha \beta} m \, \varepsilon_{\alpha, i}^{m-1}=  m^{ij}_{\alpha \beta} m\,  \varepsilon_{\alpha, i}^{-1} & \text{ if }\varepsilon_{\alpha, i}
   = \varepsilon_{\beta, j} 
  \end{array}\right.
  $$
  hence the result.
\end{proof}

\begin{remark}\label{rem:values-fixed-M}
When $\nu_{i-1}<\alpha\leq \nu_i$ and $\nu_i<\beta\leq \nu_{i+1}$ (that is, when the modules $U_\alpha$, $U_\beta$
appear in consecutive steps in the semi-simple filtration), we have that the
right hand side of (\ref{eqn:values-fixed-M}) vanishes. Then $m_{\alpha\beta}^{ij}=0$ when $\varepsilon_{\alpha,i}=\varepsilon_{\beta,j}$.
\end{remark}

\begin{lemma} \label{lem:9.9}
Any representation $\rho \in R(\tau)$ is isomorphic to one of the form $\rho = \Gr_\bullet(\rho)_M$ for some $M \in {\cM}_1$.
\end{lemma}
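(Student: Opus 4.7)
The plan is to show that, by a suitable choice of basis of $V$ (equivalently, by conjugating $\rho$ by an element of $\GL_r$), we can put $\rho$ into the normal form $\varrho_M$ associated to its semi-simplification. The argument rests on the fact that both $A=\rho(x)$ and $B=\rho(y)$ are diagonalisable, because $A^n = B^m = \Id$ in $\GL_r$; in particular, $V$ is semi-simple as an $\langle A\rangle$-module, so every $\langle A\rangle$-submodule admits an $\langle A\rangle$-invariant complement, and similarly for $\langle B\rangle$.

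First, I would construct an $A$-invariant splitting of the semi-simple filtration $V_\bullet$ which further refines the $\rho$-isotypic decomposition of each graded piece. Applying semi-simplicity of $V$ as an $\langle A\rangle$-module inductively to the chain $V_{i-1}\subset V_i$ produces $A$-submodules $W_i\subset V_i$ with $V_i = V_{i-1}\oplus W_i$, so that $V = \bigoplus_i W_i$ and the induced map $W_i \to \Gr_i(V_\bullet)$ is an $A$-module isomorphism. Transporting the $\rho$-module decomposition $\Gr_i(V_\bullet) = \bigoplus_{\nu_{i-1}<\alpha\le\nu_i} U_\alpha$ back to $W_i$ through this isomorphism yields a refined $A$-invariant decomposition $W_i = \bigoplus_\alpha U_\alpha'$, with $U_\alpha'$ isomorphic to $U_\alpha$ as $A$-modules.

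Second, within each model $U_\alpha$ I would fix the basis $u_{\alpha,1},\ldots,u_{\alpha,d_\alpha}$ diagonalising $\varrho_\alpha(y)$ with eigenvalues $\varepsilon_{\alpha,l}$ (which exists since $\varrho_\alpha(y)^m=\Id$), and transport it to $U_\alpha'$ via the chosen $A$-isomorphism. Concatenating these bases over all $\alpha$ gives a basis of $V$ in which $A$ is block-diagonal with diagonal blocks $\varrho_\alpha(x)$. To analyse $B$, observe that the composition $W_i\hookrightarrow V_i \twoheadrightarrow V_i/V_{i-1}$ identifies the projection $\pi_i\colon V_i\to W_i$ along $V_{i-1}$ with the canonical $\rho$-equivariant quotient map, so the $W_i$-component of $B|_{U_\alpha'}$ corresponds to $\varrho_\alpha(y)$ acting on $U_\alpha$. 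In our chosen bases this block is exactly $\diag(\varepsilon_{\alpha,l})$, and in particular it preserves each $U_\alpha'$ individually.

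Collecting the remaining off-diagonal components of $B$ into blocks $M_{\alpha'\alpha}\colon U_\alpha' \to U_{\alpha'}'$ for $\alpha' < \alpha$ produces a matrix $M \in \cM_0$ such that, in the constructed basis, $\rho = \varrho_M$; since $\rho(y)^m = \Id$, the same holds for $\varrho_M(y)$, hence $M \in \cM_1$ by definition. The subtle point of the argument, which must be handled in the first step, is ensuring that the $A$-invariant splitting respects the full $\rho$-isotypic structure of each graded piece and not merely its $A$-module structure. This compatibility is precisely what forces the diagonal blocks of $B$ in the final basis to be the prescribed $\diag(\varepsilon_{\alpha,l})$'s, without mixing distinct isotypic summands within a common step of the filtration.
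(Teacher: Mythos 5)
Your proof is correct and follows essentially the same strategy as the paper's: use complete reducibility of the $\langle A\rangle$-module structure to produce an $A$-invariant splitting of the semi-simple filtration compatible with the isotypic decomposition of each graded piece, then adapt the basis on each block so that the diagonal part of $B$ is $\diag(\varepsilon_{\alpha,l})$. The paper's own proof is considerably terser (it simply says to take an $A$-eigenbasis adapted to the $U_\alpha$'s and then modify it so that $B$ is block-diagonal), whereas you correctly flesh out the construction of the splitting $V=\bigoplus W_i=\bigoplus U_\alpha'$ and identify the diagonal part of $B$ with the induced action on the graded pieces; one small stylistic point is that the invariance of each $U_\alpha'$ under the $W_i$-component of $B$ is a consequence of the $\rho$-equivariance of the decomposition of $\Gr_i$, rather than of the blocks being $\diag(\varepsilon_{\alpha,l})$ as your wording suggests.
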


\begin{proof}
Consider the semi-simple filtration for $\rho = (A, B)$ with irreducible pieces $U_\alpha$ as above. 
First, as $\ZZ_n$-representation, $A$ is completely reducible so we can find a basis of $V=k^r$ of $A$-eigenvectors vectors 
$\hat{u}_{\alpha, l}$ such that $\hat{u}_{\alpha, 1}, \ldots, \hat{u}_{\alpha, d_\alpha}$ is a basis of $U_\alpha$. Modify
$\hat{u}_{\alpha, 1}, \ldots, \hat{u}_{\alpha, d_\alpha}$ to another basis $u_{\alpha, 1}, \ldots, u_{\alpha, d_\alpha}$ of $U_\alpha$
 in which $B$ is block-diagonal. In this basis $(A,B) = \Gr_\bullet(\rho)_M$ for $M \in \cM_0$, and the restriction that $B^m = \Id$ 
 forces $M \in \cM_1$.
\end{proof}

This shows that if $\Gr_\bullet: R(\tau) \to \cI(\tau)$ is the semi-simplification map and $\varrho \in \cI(\tau)$ then the fiber 
$\Gr_\bullet^{-1}(\varrho)$ is contained in $\cM_1$. However, this fibre may not be the whole of $\cM_1$, since an element $M \in \cM_1$ may not 
preserve the non-splitting conditions imposed by the semi-simple filtration. For instance, if $M_{\alpha\beta} = 0$ for a fixed 
$\nu_{i} < \beta \leq \nu_{i+1}$ and for all $\nu_{i-1} < \alpha \leq \nu_{i}$ then $i$-th step of the semi-simple filtration is 
not maximal, since it can be extended with $U_\beta$. 
Hence, the characterization of this fiber will require further analysis. 
%
%

Fix $\tau \in \cT_\kappa$ with $\kappa = (\bm{\epsilon}, \bm{\varepsilon})$.  Consider the `unsymmetrized' versions of
 (\ref{eq:cI}) given by
$$
	\hat{\cI}(\tau) = \prod_{i = 1}^s \prod_{j=1}^{s_i} \left(R^{\irr}_{\kappa_{i,j}}\right)^{m_{i,j}}.
$$
There is a quotient map $\hat{\cI}(\tau) \to \cI(\tau)$. Consider the pullback diagram
\begin{equation}\label{eqn:diagr}
   \xymatrix
   {
   	\hat{R}(\tau) \ar[r] \ar[d]_{\hat{\Gr}_\bullet} & R(\tau) \ar[d]^{\Gr_\bullet} \\
   	\hat{\cI}(\tau) \ar[r] & \cI(\tau).
   }
\end{equation}

Now we shall study $\hat{R}(\tau)$ in detail. For that purpose, let us consider the map $\hat{\cI}(\tau) \to \GL_r$ 
given by $\varrho = (A,B) \mapsto B$. The fiber over $\Sigma_{\bm{\varepsilon}} \in \GL_r$, the diagonal matrix with eigenvalues 
$\bm{\varepsilon}$, will be denoted by 
 \begin{equation}\label{eqn:I0}
 \hat{\cI}_0(\tau)=\big\{\varrho=(A,\Sigma_{\bm{\varepsilon}}) \in \hat{\cI}(\tau) \big\}.
 \end{equation}
These are semi-simple representations whose matrix $B$ is diagonal (not just diagonalizable). 

Fix $\varrho \in \hat{\cI}_0(\tau)$. Let 
 \begin{equation}\label{eqn:cM}
  \cM_\varrho= \big\{ M \in \cM_1\, |\, \varrho_M \in \hat{R}(\tau)\big\},
 \end{equation}
that is, the set of admissible upper-triangular completions of $\varrho$. We also consider the gauge group of $\cM_\varrho$ defined as 
 \begin{equation}\label{eqn:cG}
 \cG_\varrho=\{ P \in \GL_r\, | \, \text{for all }M \in \cM_\varrho, P\varrho_MP^{-1} = \varrho_{M'}, \text{ for some }
 M' \in \cM_\varrho\}.
 \end{equation}


\begin{theorem}\label{thm:11}
$\cM_\varrho$ is an algebraic variety and $\cG_\varrho$ is an algebraic subgroup of $\GL_r$. Moreover, they are independent of the 
representation, in the sense that $\cM_\varrho \cong \cM_{\varrho'}$ and $\cG_\varrho \cong \cG_{\varrho'}$ for any 
$\varrho, \varrho' \in \hat{\cI}_0(\tau)$.
\end{theorem}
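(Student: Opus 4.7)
The plan is to establish the three claims separately: the algebraic variety structure of $\cM_\varrho$, the algebraic group structure of $\cG_\varrho$, and independence from $\varrho$. The unifying observation is that, by Proposition~\ref{prop:values-fixed-M}, the ambient linear space $\cM_1 \subset \cM_0$ is defined purely by the eigenvalue multisets appearing in $\bm{\varepsilon}$, which are constant on $\hat{\cI}_0(\tau)$; this lets one view $\varrho$ as a parameter varying over a base while $\cM_1$ itself remains fixed as an affine space.

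First I would show that $\cM_\varrho$ is open in $\cM_1$, hence an algebraic variety. The membership condition $\varrho_M \in \hat{R}(\tau)$ requires the semi-simple filtration of $\varrho_M$ to coincide with the prescribed one; since the graded pieces are already fixed by $\varrho$, the only way this can fail is that some $U_\beta$ with $\nu_i < \beta \leq \nu_{i+1}$ splits off to a lower step of the filtration, and this splitting locus is cut out by polynomial equations in the blocks $M_{\alpha\beta}$ (an Ext-vanishing condition computed from the $A$-action on $U_\alpha$ and $U_\beta$). For $\cG_\varrho$, the decisive observation is that $\varrho_M(x) = A$ is independent of $M$; consequently $P\varrho_M P^{-1} = \varrho_{M'}$ forces $PAP^{-1} = A$, placing $P$ in the closed subgroup $Z_{\GL_r}(A)$. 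Writing $\varrho_M(y) = \Sigma_{\bm{\varepsilon}} + N_M$ with $N_M$ strictly upper-triangular with respect to the decomposition $V = \bigoplus_\alpha U_\alpha$, the further requirement that $P\varrho_M(y)P^{-1}$ retain diagonal-block part $\Sigma_{\bm{\varepsilon}}$ and strictly upper-triangular off-diagonal part lying in $\cM_1$ is another closed polynomial condition on $P$; intersecting with $Z_{\GL_r}(A)$ exhibits $\cG_\varrho$ as a closed subgroup of $\GL_r$.

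For the independence claim, the strategy is to produce, for each pair $\varrho, \varrho' \in \hat{\cI}_0(\tau)$, an element $P \in \GL_r$ commuting with $\Sigma_{\bm{\varepsilon}}$ (so that it preserves $\cM_1$) that conjugates $A$ to $A'$; conjugation by such a $P$ then transports $\cM_\varrho$ to $\cM_{\varrho'}$ and $\cG_\varrho$ to $\cG_{\varrho'}$. Block-by-block this amounts to intertwining the irreducible pieces $\varrho_\alpha, \varrho'_\alpha \in R^{\irr}_{\kappa_{i,j}}$ by some $P_\alpha$ commuting with the diagonal $y$-matrix on $U_\alpha$, and then taking $P = \bigoplus_\alpha P_\alpha$. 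The hard part is precisely here: two irreducible representations with the same eigenvalue configuration need not be $\GL_{d_\alpha}$-conjugate, since $\frM^{\irr}_{\kappa_{i,j}}$ can be positive-dimensional. The resolution is to work in families, organizing the construction into an algebraic morphism over $\hat{\cI}_0(\tau)$ whose fiber over $\varrho$ is $\cM_\varrho$ (and similarly for $\cG_\varrho$), and establishing local triviality in the Zariski or \'etale topology by exploiting the torus action that rescales the chosen basis vectors $u_{\alpha,l}$ of each $U_\alpha$; in the ranks treated in this paper this torus acts transitively on each fiber-type, so the abstract fiberwise isomorphism follows at once.
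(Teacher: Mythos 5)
Your proposal has two significant gaps, one in the algebraicity argument for $\cM_\varrho$ and one in the independence claim.

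\textbf{Openness of $\cM_\varrho$ fails.} You assert that $\cM_\varrho$ is open in $\cM_1$ because ``the only way [the filtration] can fail is that some $U_\beta$ with $\nu_i < \beta \leq \nu_{i+1}$ splits off to a lower step.'' This misses a second failure mode: if $M_{\alpha\beta} \neq 0$ for two indices $\alpha < \beta$ lying \emph{within the same step} ($\nu_{i-1}<\alpha<\beta\leq\nu_i$), then $U_\alpha\oplus U_\beta$ carries a non-split extension, the $i$-th graded piece is no longer semi-simple of the prescribed form, and the filtration step shrinks — the type of $\varrho_M$ changes. Ruling this out is a \emph{closed} (indeed linear) condition $M_{\alpha\beta}=0$, not an open one. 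The paper's proof explicitly records this as the linear subspace $\cH\subset\cM_1$ and then exhibits $\cM_\varrho = \cH\cap\cL$, with $\cL$ the open complement of Schubert-type conditions on the consecutive-step blocks. So $\cM_\varrho$ is locally closed in $\cM_1$, not open, and your argument as stated does not establish that it is a variety.

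\textbf{The independence argument does not close.} You correctly diagnose the obstacle — two points $\varrho,\varrho'\in\hat\cI_0(\tau)$ need not be conjugate by any $P\in\GL_r$, since the irreducible blocks range over positive-dimensional moduli — and then propose to ``work in families,'' invoking local triviality and a torus action. As written this is an idea, not a proof: local triviality is precisely what must be proved, and the torus rescaling the vectors $u_{\alpha,l}$ does not act transitively on $R^{\irr}_{\kappa_{i,j}}$ in general. The paper resolves this by a completely different, and more elementary, route: it computes $\cM_\varrho$ concretely as $\cH\cap\cL$ where $\cH$ is a fixed linear space and $\cL$ is cut out by non-membership in linear subspaces $L_\beta$ whose \emph{dimensions} and combinatorial pattern depend only on $\kappa$ and $\tau$, not on the particular $\varrho$. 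Since two complements of linear subspaces of the same dimensions in fixed ambient spaces are isomorphic, the isomorphism $\cM_\varrho\cong\cM_{\varrho'}$ follows directly, with no need to produce an intertwiner between $\varrho$ and $\varrho'$. Your centralizer-based treatment of $\cG_\varrho$ is a reasonable alternative in spirit to the paper's constructible-image argument, but you would still need to be careful: the ``for all $M\in\cM_\varrho$'' quantifier in the definition of $\cG_\varrho$, together with the fact that $\cM_\varrho$ is only locally closed, means that ``the further requirement \dots\ is another closed polynomial condition'' is not automatic and requires justification.
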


\begin{proof}
Once we know that $\cM_\varrho$ is an algebraic variety, then the fact that $\cG_\varrho$ is an algebraic subgroup follows easily. 
Indeed, consider the cartesian square
\[
\begin{displaystyle}
   \xymatrix
   { Z \ar[r]\ar[d] & \cM_{\varrho} \times \GL_r \ar[d]\\
   \cM_{\varrho} \ar[r] & \cM_1\, ,
   }
\end{displaystyle}   
\]
where the bottom morphism is the inclusion and the rightmost map is $(P, \varrho_M) \mapsto \cM_1$. The pullback variety is thus
$$
	Z = \left\{(P, M) \in \GL_r \times \cM_\varrho\,|\, P\varrho_M P^{-1} \in \cM_\varrho \right\}.
$$
By definition, $\cG_\varrho = \pi(Z)$ where $\pi: \GL_r \times \cM_\varrho \to \GL_r$ is the projection onto the first component. 
The projection of an algebraic variety is a constructible set (a disjoint union of locally closed subsets in the Zariski topology).
As $\cG_\varrho$ is a group, it is an homogeneous space. Hence taking a smooth point which has an open neighbourhood where
$\cG_\varrho$ is closed, we see that this happens at every point. Therefore $\cG_\varrho$ is quasi-projective, and hence an algebraic variety.


With respect to $\cM_\varrho$, take $M = \oplus\, M_{\alpha\beta} \in \cM_1$. In order to lie in $\cM_\varrho$, the matrices 
$M_{\alpha\beta}$ must satisfy two conditions. In the first place,
if $\nu_{i-1}<\alpha < \beta \leq \nu_i$ then we must have $M_{\alpha\beta}=0$. Thus we set the linear subspace
 $$
 	\cH = \big\{ M=  \oplus\, M_{\alpha\beta}  \, | \, M_{\alpha\beta}=0 , \text{ if } \nu_{i-1}<\alpha < \beta \leq \nu_i, i=2,\ldots, s \big\} 
 	\subset 
 	\cM_1.
 $$ 

Second, if $\nu_{i-1}< \alpha \leq \nu_i$ and $\nu_{i} < \beta \leq \nu_{i+1}$, then the maximality of the semi-simple
sub-representations implies  that we cannot have a larger decomposition
 \begin{equation}\label{eqn:decomp}
 \Gr_{i-1}(\varrho_M) \oplus U_{\beta} \subset V/V_{i-1}.
 \end{equation} 
 We have a decomposition (\ref{eqn:decomp}) when there is
 a change of basis such that $M_{\alpha\beta}=0$ for all $\alpha = \nu_{i-1}+1,\ldots, \nu_i$. 
 Let us fix $\alpha, \beta$ and let us only focus on these blocks. There, we have that $\varrho = (A,B)$ and 
 $\varrho_M = (A, B_M)$, where
 $$
 A=\begin{pmatrix} A_\alpha & 0 \\ 0 & A_\beta \end{pmatrix},\quad
 B=\begin{pmatrix} B_\alpha & 0 \\ 0 & B_\beta \end{pmatrix}, \quad
 B_M=\begin{pmatrix} B_\alpha & M_{\alpha\beta} \\ 0 & B_\beta \end{pmatrix},
$$ 
where the matrices $B_\alpha, B_\beta$ are diagonal. Thus a change of basis $P$ has to be of the form
 $$
 P=\begin{pmatrix} P_\alpha & S \\ 0 & P_\beta \end{pmatrix}.
$$
Since $P\varrho_MP^{-1}$ has to lie in $\cM_\varrho$, we must have $P A P^{-1} = A$, and this forces that $P_\alpha=\lambda_\alpha \Id$,
 $P_\beta=\lambda_\beta \Id$ for some $\lambda_\alpha, \lambda_\beta \in k^*$. Now, we consider a change of coordinates of the form
 \begin{equation}\label{eqn:Q}
	Q = \begin{pmatrix} Q_\alpha & 0 \\ 0 & Q_\beta \end{pmatrix},
 \end{equation}
such that
$$
	QAQ^{-1} = \begin{pmatrix} \diag(\epsilon_{\alpha ,1}, \ldots, \epsilon_{\alpha ,d_\alpha}) & 0 \\ 0 & 
	\diag(\epsilon_{\beta, 1}, \ldots,	 \epsilon_{\beta, d_\beta}) \end{pmatrix}.
$$
It is straightforward to check that $S = Q_\alpha \Theta Q_\beta^{-1}$, where $\Theta = (\theta_{ij})$ is a matrix with $\theta_{ij} = 0$ if 
$\epsilon_{\alpha, i} 
\neq \epsilon_{\beta, j}$. In this fashion, the action of $P$ is given by
\begin{align*}
	M_{\alpha\beta}  \mapsto &\, \lambda_\alpha\lambda_\beta^{-1} M_{\alpha\beta}+ \lambda_\beta^{-1} 
	\left(S B_\beta - B_\alpha S\right) \\
	&= \lambda_\alpha\lambda_\beta^{-1} M_{\alpha\beta}+ \lambda_\beta^{-1} \left(Q_\alpha \Theta 
	Q_\beta^{-1} B_\beta - B_\alpha Q_\alpha 
	\Theta Q_\beta^{-1}\right).
\end{align*}
Observe that this action preserves the conditions of Proposition \ref{prop:values-fixed-M}. We consider the linear space 
$\ell_{\alpha\beta} = \langle Q_\alpha \Theta Q_\beta^{-1} B_\beta - B_\alpha Q_\alpha \Theta Q_\beta^{-1} \rangle$. Observe 
that $\ell_{\alpha\beta}$ only depends on the base representation $\varrho$.

In this way, the condition that, after a change of variables, the blocks $M_{\alpha\beta}$ do not vanish for all 
$\nu_{i-1}< \alpha \leq \nu_i$, is that it cannot happen that $M_{\alpha\beta} \in \ell_{\alpha\beta}$ for all $\alpha$. 
 Hence, we take
 $$
 L_\beta =  \bigoplus_{\nu_{i-1} < \alpha \leq \nu_i} \ell_{\alpha\beta} \subset \bigoplus_{\nu_{i-1} < \alpha \leq \nu_i}
  \Hom_k \left(U_\beta,U_\alpha\right) = \Hom_k \left(U_\beta, \Gr_{i-1}(\varrho)\right).
 $$

In the case that all the isotypic components of $\Gr_i(\varrho_M)$ are of multiplicity one (that is, $m_{i,j}=1$ for all $j$),
then the only possible decompositions (\ref{eqn:decomp})
 must involve one of the $U_\beta$. Therefore the condition means that
 $$
 \mathrm{p}_\beta(M)=(M_{\alpha\beta})_{\nu_{i-1} < \alpha \leq \nu_i}  \notin L_\beta ,
 $$
 for all $\nu_{i}< \beta \leq \nu_{i+1}$. So setting
  $$
  \cL =  \big\{ M=  \oplus\, M_{\alpha\beta}  \, | \, \mathrm{p}_\beta(M) \notin L_\beta,
   \text{ for all } \nu_i <\beta \leq \nu_{i+1}, i=1,\ldots, s-1 \big\} ,
 $$
we have that $\cM_\varrho = \cH \cap \cL$. Observe that $\cH$ is independent of $\varrho$ and, 
for any $\varrho, \varrho' \in \hat{\cI}_0(\tau)$, we have that $\cL \cong \cL'$. Hence, we have that $\cM_\varrho \cong \cM_{\varrho'}$.

In the general case, the isotypic components can have multiplicity $m_{i,j}>1$. 
If $U_{\beta_1},\ldots, U_{\beta_k}$ are the isomorphic components, then the condition means that
 $$
 \la\mathrm{p}_{\beta_1}(M),\ldots, \mathrm{p}_{\beta_k}(M)\ra_* \cap L_\beta \neq \emptyset.
 $$
 Here $L_\beta$ is any of the $L_{\beta_j}$, since all $Q_{\beta_j}$ are the same, and $\la v_1, \ldots, v_m\ra_*$ denotes the collection of non-trivial linear combinations of $v_1, \ldots, v_m$, that is of combinations of the form $\lambda_1 v_1 + \ldots + \lambda_m v_m$ with $(\lambda_1, \ldots, \lambda_m) \neq (0, \ldots, 0)$. 
We define an algebraic set
  \begin{equation}\label{eq:cell}
  \begin{aligned}
  \cL =  \big\{ M \, | & \,  \la\mathrm{p}_{\beta_1}(M),\ldots, \mathrm{p}_{\beta_k}(M)\ra_* \cap L_\beta\neq \emptyset, \\
   & \text{ for all } \nu_i <\beta_1,\ldots,\beta_k \leq \nu_{i+1} \text{ isotypic}, i=1,\ldots, s-1 \big\} .
   \end{aligned}
\end{equation} 
The algebraicity of this condition is equivalent to the non-vanishing of the element 
  $$
  \mathrm{p}_{\beta_1}(M)\wedge \ldots\wedge \mathrm{p}_{\beta_k}(M)\in
  \bigwedge\nolimits^k  \left( \Hom\left(U_\beta, \Gr_{i-1}(\varrho)\right)/L_\beta\right).
   $$

As before $\cM_\varrho = \cH \cap \cL$, and $\cM_\varrho \cong \cM_{\varrho'}$ for any $\varrho, \varrho' \in \hat{\cI}_0(\tau)$.
\end{proof}

With this result at hand, we get the following.

\begin{proposition}\label{prop:fibration}
Fix $\kappa$ and a type $\tau \in \cT_\kappa$. The map
$$
	\hat{\Gr}_\bullet: \hat{R}(\tau) \to \hat{\cI}(\tau)
$$
is locally trivial in the Zariski topology. Moreover, the fiber of this map is the quotient $\left(\left(\cM_\varrho \times 
\GL_r\right)/\cG_\varrho\right)/(\prod_{\alpha} \PGL_{d_{\alpha}})$, where $d_\alpha = \dim U_\alpha$.
\end{proposition}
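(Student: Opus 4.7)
The plan is to construct the fiber of $\hat{\Gr}_\bullet$ explicitly as a double quotient and then establish Zariski local triviality by exhibiting explicit diagonalizations. First, I fix a reference point $\varrho \in \hat{\cI}_0(\tau)$, so that the $B$-component equals the diagonal matrix $\Sigma_{\bm{\varepsilon}}$, and I build an assembly map
$$
\Psi: \cM_\varrho \times \GL_r \longrightarrow \hat{R}(\tau), \qquad (M, P) \longmapsto P \varrho_M P^{-1},
$$
viewed as a point of $\hat{R}(\tau)$ via the ordering of graded pieces induced by the block decomposition of $P\varrho P^{-1}$. By Lemma \ref{lem:9.9}, every $\rho \in R(\tau)$ whose graded is conjugate to $\varrho$ arises in this form, so the image of $\Psi$ covers exactly $\hat{\Gr}_\bullet^{-1}(\GL_r \cdot \varrho)$.

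Next, I would analyze the fibers of $\Psi$ in two successive stages. The gauge group $\cG_\varrho$ from (\ref{eqn:cG}) records precisely when two pairs $(M, P)$ and $(M', P')$ produce the same representation: namely when $P^{-1}P' \in \cG_\varrho$ and $P^{-1}P'$ transforms $M'$ into $M$ under the induced action on $\cM_\varrho$. This yields the inner quotient $(\cM_\varrho \times \GL_r)/\cG_\varrho$. The outer quotient by $\prod_\alpha \PGL_{d_\alpha}$ then captures the remaining freedom in identifying each abstract irreducible piece with the block $U_\alpha$ arising from $\varrho$: such identifications form a $\GL_{d_\alpha}$-torsor, and by Schur's lemma $\Aut(\varrho_\alpha) = k^* \cdot \Id$ is absorbed through the inclusion of the centers of $\prod_\alpha \GL_{d_\alpha}$ into $\cG_\varrho$, leaving only a residual $\prod_\alpha \PGL_{d_\alpha}$-action.

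Zariski local triviality then follows because $\cG_\varrho$ is a closed subgroup of $\GL_r$ (Theorem \ref{thm:11}) and because $\hat{\cI}(\tau)$ has already unsymmetrized the isomorphic irreducible pieces, so the monodromy obstruction from permuting identical graded pieces, which prevents the symmetrized fibration $R(\tau) \to \frM_\tau$ from being Zariski-locally trivial in general, does not arise here. Explicit local sections over Zariski opens of $\hat{\cI}(\tau)$ can be constructed by simultaneously diagonalizing each $B_\alpha$ on the open locus where the relevant eigenvalue discriminants are nonzero, and then selecting a representative upper-triangular completion by standard linear algebra over the base.

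The main obstacle I anticipate is cleanly disentangling the interaction between $\cG_\varrho$ and $\prod_\alpha \PGL_{d_\alpha}$ in the presence of repeated isotypic components ($m_{i,j} > 1$), where the precise embedding of the block-diagonal centralizer into the gauge group requires careful bookkeeping. Verifying that the combined action is free on a dense open and that the two quotients may be taken successively, rather than tangled into a single non-iterated quotient, is the delicate step; this is closely related to the remark in the introduction that, for $r \leq 4$, repeated isotypic components of dimension $\geq 2$ are either ruled out or handled separately, which is precisely what makes the clean iterated-quotient form of the fiber available in this regime.
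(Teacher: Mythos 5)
Your proposal follows essentially the same route as the paper's proof: fix a diagonal reference $\varrho_0 \in \hat{\cI}_0(\tau)$, build the assembly map $\Psi(M,Q) = Q(\varrho_0)_M Q^{-1}$, quotient first by the gauge group $\cG_{\varrho_0}$ and then by $\prod_\alpha \PGL_{d_\alpha}$, and derive Zariski local triviality from the fact that the diagonalizing change of basis $Q$ of (\ref{eqn:Q}) can be chosen regularly over a Zariski neighborhood. One caveat on your closing paragraph: the iterated-quotient form of the fiber in Proposition \ref{prop:fibration} is available for all ranks, since $\hat{\Gr}_\bullet$ lives over the \emph{unsymmetrized} $\hat{\cI}(\tau)$; the restriction to $r \le 4$ (or more precisely the hypothesis that repeated irreducible pieces are $1$-dimensional) only enters in Corollary \ref{cor:no-action-discrete}, where one descends to the symmetrized fibration $R(\tau) \to \cI(\tau)$ and must control the monodromy of the cover $\hat{\cI}(\tau) \to \cI(\tau)$, which is a separate issue from the one you flag.
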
 

\begin{proof}
Let us look at the fiber of the map $\hat{\Gr}_\bullet$. 
Fix $\varrho \in \hat{\cI}(\tau)$ and let $\varrho_0 \in \hat{\cI}_0(\tau)$ be the diagonal element conjugated to 
$\varrho$. We have a natural map 
 $$
 \Psi: \cM_{\varrho_0} \times \GL_r  \to \hat{R}(\tau), \quad \Psi(M, Q) = Q(\varrho_0)_MQ^{-1}\, .
 $$ 
Moreover, $\Psi(M,Q) = \Psi(M',Q')$ if and only if 
$Q'^{-1}Q (\varrho_0)_M
 Q^{-1}Q' = (\varrho_0)_{M'}$, which in particular means that $Q'^{-1}Q \in \cG_{\varrho_0}$.

Hence, if we quotient by the natural action of the gauge group we get that the map $\tilde{\Psi}: \left(\cM_{\varrho_0} \times \GL_r\right)/
\cG_{\varrho_0} \to \hat{R}(\tau)$ is injective. Its image is $\hat{\Gr}_\bullet^{-1}(\varrho_0) \times \prod_{\alpha} \PGL_{d_{\alpha}}$,
where the second factor accounts for moving $\varrho_0$ in its isomorphism class (recall that the action of $\PGL_{d_\alpha}$ on each irreducible piece is free). 
Quotienting by this factor the identification of the fiber $\hat{\Gr}_\bullet^{-1}(\varrho_0)$ follows.
The Zariski locally triviality comes 
from the fact that the matrix (\ref{eqn:Q}) can be defined in a Zariski open set, which amounts to diagonalizing
$A$ with a basis in a Zariski neighborhood.
\end{proof}

Due to the previous result, we will loosely write the isomorphism type of $\cM_\varrho$ and $\cG_\varrho$ by $\cM_\tau$ and $\cG_\tau$, respectively. With this notation, we have that

\begin{corollary}\label{cor:formula-X}
For any type $\tau \in \cT_\kappa$ we get that
$$
	[\hat{R}(\tau)] = \frac{[\hat{\cI}(\tau)]}{\prod_{\alpha} [\PGL_{d_{\alpha}}]} \cdot \frac{[\cM_\tau] \cdot [\GL_r]}{[\cG_\tau]}\, .
$$
\end{corollary}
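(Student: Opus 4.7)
The plan is to derive this corollary essentially as a bookkeeping consequence of Proposition \ref{prop:fibration}. The content of that proposition is that $\hat{\Gr}_\bullet:\hat R(\tau)\to\hat\cI(\tau)$ is Zariski locally trivial with fiber
$$
F \;=\; \bigl((\cM_\varrho\times\GL_r)/\cG_\varrho\bigr)\big/\!\!\prod_\alpha \PGL_{d_\alpha}.
$$
Since $K(\VarC)$ is multiplicative along Zariski locally trivial fibrations, the first step is simply
$$
[\hat R(\tau)] \;=\; [F]\cdot[\hat\cI(\tau)].
$$
It then only remains to show $[F]=[\cM_\tau]\,[\GL_r]/\bigl([\cG_\tau]\prod_\alpha[\PGL_{d_\alpha}]\bigr)$, which amounts to justifying that each quotient in the construction of $F$ splits multiplicatively in the Grothendieck ring.

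For the inner quotient, I would note that $\cG_{\varrho_0}\subset\GL_r$ acts on $\cM_{\varrho_0}\times\GL_r$ diagonally, and that the action on the second factor (by right multiplication, after inverting as in the formula $\Psi(M,Q)=Q(\varrho_0)_MQ^{-1}$) is already free and makes $\GL_r\to\GL_r/\cG_{\varrho_0}$ into a Zariski locally trivial principal $\cG_{\varrho_0}$-bundle by the standard argument for linear algebraic subgroups of $\GL_r$ (Chevalley/Rosenlicht plus triangulability of the situation used in the proof of Proposition \ref{prop:fibration}). Therefore $(\cM_{\varrho_0}\times\GL_r)/\cG_{\varrho_0}$ is itself a Zariski locally trivial $\cM_{\varrho_0}$-bundle over $\GL_r/\cG_{\varrho_0}$, whence
$$
\bigl[(\cM_{\varrho_0}\times\GL_r)/\cG_{\varrho_0}\bigr] \;=\; \frac{[\cM_\tau]\cdot[\GL_r]}{[\cG_\tau]}.
$$

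For the outer quotient by $\prod_\alpha\PGL_{d_\alpha}$, I would invoke the fact already used in the proof of Proposition \ref{prop:fibration}: this group acts freely on the product of orbits of irreducible representations (by Schur's lemma, since the $U_\alpha$ are irreducible and pairwise non-isomorphic at the level of the unsymmetrised decomposition), and the corresponding principal bundle is Zariski locally trivial because each factor $\PGL_{d_\alpha}$ acts freely on an irreducible representation variety with a slice cut out by fixing a diagonalising basis in a Zariski neighbourhood. Dividing out one more time gives the stated formula
$$
[\hat R(\tau)] \;=\; \frac{[\hat\cI(\tau)]}{\prod_\alpha[\PGL_{d_\alpha}]}\cdot\frac{[\cM_\tau]\cdot[\GL_r]}{[\cG_\tau]}.
$$

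The only delicate point — and where I would expect to spend the most care — is the Zariski local triviality of the principal bundles for $\cG_\tau$ and $\prod_\alpha\PGL_{d_\alpha}$; in rank $\le 4$ these are harmless because, as the authors note, the isotypic components have multiplicity one outside a case treated separately, but in the general write-up one must check that the local section constructed in Proposition \ref{prop:fibration} (diagonalising $A$ on a Zariski open set via the matrix $Q$ of (\ref{eqn:Q})) simultaneously trivialises both quotients. Once that is done, the corollary follows by stringing together the three Zariski locally trivial fibrations above and applying the multiplicativity of the motive in each step.
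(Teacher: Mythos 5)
Your proof is correct and takes essentially the same approach as the paper's one-line argument (Proposition \ref{prop:fibration} plus multiplicativity of motives over Zariski locally trivial fibrations), spelling out the implicit factorisation of the fiber's motive in $K(\VarC)$. One small remark: the proof of Proposition \ref{prop:fibration} already identifies $(\cM_{\varrho_0}\times\GL_r)/\cG_{\varrho_0}$ with the direct product $\hat{\Gr}_\bullet^{-1}(\varrho_0)\times\prod_\alpha\PGL_{d_\alpha}$, so the outer quotient needs no separate local-triviality argument; only the inner quotient by $\cG_{\varrho_0}$ genuinely requires the specialness/triangulability discussion you give.
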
 
 
 \begin{proof}
The formula follows directly from Proposition \ref{prop:fibration} taking into account that virtual classes are multiplicative for 
Zariski locally trivial fibrations.
 \end{proof}

Observe that, writing down the multiplicities of the isotypic components in (\ref{eq:cI}) in terms of the pieces $U_\alpha$, we have
$$
	\frac{[\hat{\cI}(\tau)]}{\prod_{\alpha} [\PGL_{d_{\alpha}}]} = \prod_{\alpha} \frac{\left[R^{\irr}_{\kappa_{\alpha}}\right]}{[\PGL_{d_{\alpha}}]} = \prod_{\alpha} \frM_{\kappa_{\alpha}}^{\irr},
$$
where $\kappa_\alpha$ denotes the configuration of eigenvalues of the $\alpha$-block. We will shorten this last factor as $\frM_\tau^{\irr} = \prod_{\alpha} \frM_{\kappa_{\alpha}}^{\irr}$.

\begin{corollary}\label{cor:no-action-discrete}
With the notation of (\ref{eq:cI}), if for every $m_{i,j} > 1$ we have that $\dim W_{i,j} = 1$, i.e.\ if all the repeated irreducible
 representations are $1$-dimensional, then $\cI(\tau) = \hat{\cI}(\tau)$ and the map
$$
	R(\tau) \to \cI(\tau)
$$
is locally trivial in the Zariski topology with fiber $\left(\left(\cM_\varrho \times \GL_r\right)/\cG_\varrho\right)/(\prod_\alpha \PGL_{d_\alpha})$.
\end{corollary}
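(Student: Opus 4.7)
The plan is to reduce Corollary \ref{cor:no-action-discrete} directly to Proposition \ref{prop:fibration} by using the pullback diagram (\ref{eqn:diagr}). First I would verify that under the hypothesis the symmetrization map $\hat{\cI}(\tau) \to \cI(\tau)$ is an isomorphism. Then, since (\ref{eqn:diagr}) is a pullback square and the bottom arrow is an isomorphism, the top arrow $\hat{R}(\tau) \to R(\tau)$ must be an isomorphism too, identifying $\Gr_\bullet$ with $\hat{\Gr}_\bullet$. The Zariski local triviality and the fiber description then transport verbatim from Proposition \ref{prop:fibration}.

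The key step is the first one, and I would do it factor by factor in (\ref{eq:cI}). When $m_{i,j} = 1$, the corresponding factor of the symmetrization is $\Sym^1 = \Id$ and there is nothing to show. When $m_{i,j} > 1$, the hypothesis gives $\dim W_{i,j} = 1$, so $R^{\irr}_{\kappa_{i,j}}$ parametrizes $1$-dimensional irreducible representations of $\ZZ_n \star \ZZ_m$ with a prescribed eigenvalue configuration $\kappa_{i,j}$. Such a representation is uniquely determined by the scalars $\rho(x), \rho(y) \in k^*$, which are fixed by $\kappa_{i,j}$, so $R^{\irr}_{\kappa_{i,j}}$ is a single (reduced) point. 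Both the Cartesian $m_{i,j}$-th power and the symmetric $m_{i,j}$-th power of a point are again a point, and the quotient map is the identity there. Taking the product of these factor-wise isomorphisms gives $\hat{\cI}(\tau) \cong \cI(\tau)$.

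With the identification $\hat{\cI}(\tau) \cong \cI(\tau)$ established, I would complete the proof by invoking the universal property of the pullback (\ref{eqn:diagr}) to conclude $\hat{R}(\tau) \cong R(\tau)$, and then reading off the fibration structure of $\Gr_\bullet$ from that of $\hat{\Gr}_\bullet$ via Proposition \ref{prop:fibration}. I do not foresee any genuine obstacle here: the whole content of the corollary is that the dimension hypothesis is exactly what is needed to trivialize the symmetrization that separates $\cI(\tau)$ from $\hat{\cI}(\tau)$, which is precisely the non-trivial permutation monodromy flagged in the introduction as the obstruction in higher rank. Outside of that regime, the quotient by $\prod_{i,j} \frS_{m_{i,j}}$ acts nontrivially on a positive-dimensional base and the argument genuinely fails, but the hypothesis rules this out by construction.
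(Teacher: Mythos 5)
Your argument is correct and follows essentially the same route as the paper: observe that the hypothesis forces each factor $R^{\irr}_{\kappa_{i,j}}$ with $m_{i,j}>1$ to be a point, so the symmetrization $\hat{\cI}(\tau)\to\cI(\tau)$ is an isomorphism, and then invoke Proposition \ref{prop:fibration}. The paper compresses your pullback-square step into the single remark that the $S_{m_{i,j}}$-action is trivial, but the content is identical.
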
 

\begin{proof}
We know that $R^{\irr}_{\kappa}$ is a single point for $1$-dimensional representations, so in this case the action of the 
symmetric group $S_{m_{i,j}}$ on
$(R^{\irr}_{\kappa})^{m_{i,j}}$ is trivial. Thus, $\hat{\cI}(\tau) = \cI(\tau)$ and the result follows from 
Proposition \ref{prop:fibration}.
\end{proof}

Therefore, under the hypothesis of Corollary \ref{cor:no-action-discrete}, we have the analogous formula to Corollary \ref{cor:formula-X}
 \begin{equation}\label{eqn:working-eqn3}
	[R(\tau)] = [\frM_\tau^{\irr}] \cdot \frac{[\cM_\tau] \cdot [\GL_r]}{[\cG_\tau]}.
 \end{equation}

In particular, for rank $r\leq 4$, the condition of Corollary \ref{cor:no-action-discrete} holds. The only exception
is $r=4$, $s=1$, $m_{1,1}=2$, $\dim W_{1,1}=2$, but in this case all representations are reducible (see Section \ref{subsec:8.2}), 
so it does not contribute to $R_r^{\irr}$.

\section{Explicit formulas for some strata} \label{sec:explicit-formulas}

To compute the motive of $[\cR^{\irr}(\varpi)]$ we need to join the equations (\ref{eqn:working-eqn1}), (\ref{eqn:working-eqn2})
and (\ref{eqn:working-eqn3}), where $\cI_0(\tau)$, $\cM_\tau$ and $\cG_\tau$ are given in (\ref{eqn:I0}), (\ref{eqn:cM})
and (\ref{eqn:cG}).

The proof of Theorem \ref{thm:11} is constructive and gives explicit descriptions of $\cM_\tau$ and $\cG_\tau$. 
Fix two values $\alpha, \beta$ in different steps of the semi-simple filtration. 
Let $\epsilon_1, \ldots, \epsilon_a$ (resp.\ $\varepsilon_1, \ldots, \varepsilon_b$) be the eigenvalues of $A_\alpha$ and $A_\beta$ 
(resp.\ $B_\alpha$ and $B_\beta$) and let $a^\alpha_{k}$ and $a^\beta_{k}$ (resp.\ $b^\alpha_{ k}$ and $b^\beta_{k}$) be the 
multiplicities of $\epsilon_k$ in $A_\alpha$ and $A_\beta$ (resp.\ of $\varepsilon_k$ in $B_\alpha$ and $B_\beta$). Then, 
$b^\alpha_kb^\beta_k$ is the number of times that $\varepsilon_k$ is a repeated eigenvalue in $B_\alpha$ and $B_\beta$ and thus, 
according to Proposition \ref{prop:values-fixed-M}, the elements of $\cM_1$ have $\sum_k b_{k}^\alpha b_{k}^\beta$ fixed elements. 
So the contribution to the dimension of $\cM_1$ of the pair $\alpha, \beta$ is 
 $$
 C(\alpha,\beta)=\dim U_\alpha \cdot \dim U_\beta - \sum_k b_{k}^\alpha b_{k}^\beta\, .
  $$
  
Define $d(\alpha) = i$, if $U_\alpha$ belongs to the $i$-th step of the semi-simple filtration i.e.\ 
$\nu_{i-1} < \alpha \leq \nu_i$. In the same vein, define $c_i(\beta) = j$, if $U_\beta$ belongs to 
the $i$-th step of the semi-simple filtration and it is a component of the $j$-th isotypic factor $W_{i,j}$, that
is $U_\beta \cong W_{i,j}$.
In this way, the contribution to $\cM_1$ of the blocks corresponding to the $j$-th  isotypic component is a vector space of dimension
 \begin{align*}
 C_{i,j}&=\sum_{d(\alpha) = i}\, \, \sum_{c_{i+1}(\beta) = j} C(\alpha,\beta).
 \end{align*}

To this contribution, we have to subtract the elements in $\cM_1 - \cM_\tau$ corresponding to the forbidden configurations. 
These are given by the ``Schubert cells'' (\ref{eq:cell}),
\begin{align*}
\cD_{i,j} = \big\{\la\mathrm{p}_{\beta_1}(M),\ldots, \mathrm{p}_{\beta_k}(M)\ra_* \cap L_\beta \neq \emptyset 
\,|\, c_{i+1}(\beta_1) = \ldots = c_{i+1}(\beta_k) = j \big\}.
\end{align*}
Hence, the contribution of the blocks in consecutive steps is
$$
	\prod_{i=1}^s \prod_{j=1}^{s_i} \left(k^{C_{i,j}} - \cD_{i,j}\right).
$$

On the other hand, the contribution of the non-consecutive blocks $\alpha, \beta$ is just given by a vector space of dimension 
$$
	C = \sum_{d(\beta) - d(\alpha) > 1} C(\alpha,\beta).
$$
Therefore, the motive of $\cM_\tau$ is given by
 \begin{equation}\label{eqn:cMtau}
 [\cM_\tau] = [k^C] \cdot \prod_{i=1}^s \prod_{j=1}^{s_i} \left[k^{C_{i,j}} - \cD_{i,j}\right] = q^C \prod_{i=1}^s 
 \prod_{j=1}^{s_i} \left(q^{C_{i,j}} - [\cD_{i,j}]\right).
 \end{equation}
 
Now, let us give formulas for $[\cD_{i,j}]$ in some particular cases.
\begin{enumerate}
	\item\label{item:mult-1} If $m_{i+1,j} = 1$, that is the isotypic component
$W_{i+1,j}$ has of multiplicity $1$, observe that the Schubert cell is just
\begin{align*}
	\cD_{i,j} = L_\beta = \bigoplus_{d(\alpha) = i} \ell_{\alpha\beta} = \bigoplus_{d(\alpha) = i}\langle
	 Q_\alpha \Theta Q_\beta^{-1}B_\beta -B_\alpha Q_\alpha \Theta Q_\beta^{-1}\rangle,
\end{align*}
where $\beta$ is the only index with $c_{i+1}(\beta) = j$. 
The matrix $\Theta$ moves in a vector space of dimension $\sum_k a_{k}^\alpha a_{k}^\beta$. 
The kernel of the map $\Theta \mapsto  Q_\alpha \Theta Q_\beta^{-1}B_\beta -B_\alpha Q_\alpha \Theta Q_\beta^{-1}$
is given by the homomorphisms as representations, $\Hom_\G(U_\beta,U_\alpha)$,
since 
$S = Q_\alpha \Theta Q_\beta^{-1}$ is a homomorphism of representations
if and only if $SB_\beta=B_\alpha S$ (as $S A_\beta=A_\alpha S$ holds by the condition on $\Theta$). 
Adding up over $\alpha$, we have that $\cD_{i,j}$ is subspace of dimension
 \begin{align*}
 D_{i,j}&=\dim L_\beta= \sum_{d(\alpha) = i} \dim \ell_{\alpha\beta} = 
  \sum_k a_k^\alpha a_k^\beta-\dim \Hom_\G(U_\beta, \Gr_i(V_\bullet)).
 \end{align*}
Therefore, the formula (\ref{eqn:cMtau}) can be written as 
\begin{equation}\label{eq:mult-1}
 [\cM_\tau] = [k^C] \cdot \prod_{m_{i,j}> 1}  \left(q^{C_{i,j}} - [\cD_{i,j}]\right)
  \cdot \prod_{m_{i,j} = 1} \left(q^{C_{i,j}} - q^{D_{i,j}}\right).
\end{equation}
	\item Suppose that $\dim W_{i+1, j} = 1$ and $m_{i+1,j} = 2$. Denote by $\beta_1, \beta_2$ the 
	indices in the $(i+1)$-th step with $c_{i+1}(\beta_1) = c_{i+1}(\beta_2) = j$. Suppose also that 
	$s_i=1$, with $\alpha = \nu_i$ the only index in the $i$-th step. In that case, 
	$\mathrm{p}_{\beta_1}(M), \mathrm{p}_{\beta_2}(M) \in k$ so always 
	$0 \in \la\mathrm{p}_{\beta_1}(M), \mathrm{p}_{\beta_2}(M) \ra_* \cap L_{\beta}$. 
	In other works, by a change of basis, on the vector $(\mathrm{p}_{\beta_1}(M), \mathrm{p}_{\beta_2}(M))$ 
	we can always arrange that a component is zero. 
	This implies that $D_{i,j} = C_{i,j}$, and hence $[\cM_\tau]=0$. So this case cannot happen.

\item\label{item:mult-2} Suppose that $\dim W_{i+1, j} = 1$ and $m_{i+1,j} = 2$ but now $s_{i} = 2$, 
with two blocks $U_{\alpha_1}, U_{\alpha_2}$ of multiplicity $1$. Denote the eigenvalues of 
$A_{\alpha_1}, A_{\alpha_2}$ (resp.\ $B_{\alpha_1}$ and $B_{\alpha_2}$) by $\epsilon_{\alpha,1},\epsilon_{\alpha,2}$ 
(resp.\ $\varepsilon_{\alpha,1},\varepsilon_{\alpha,2}$) and let the eigenvector of $A_{\beta_1}=A_{\beta_2}$
(resp.\ $B_{\beta_1}= B_{\beta_2}$) be $\epsilon_\beta$ (resp.\ $\varepsilon_\beta$). Suppose that 
$\epsilon_{\alpha,1},\epsilon_{\alpha,2} \neq \epsilon_{\beta}$ and $\varepsilon_{\alpha,1},\varepsilon_{\alpha,2} \neq \varepsilon_{\beta}$.
In that case, we have that $\Theta = 0$ and thus $\ell_{\alpha_1\beta} = \ell_{\alpha_2\beta} = 0$. 
This implies that $L_\beta = 0$ so the Schubert cell condition is that $0 \in \la\mathrm{p}_{\beta_1}(M), \mathrm{p}_{\beta_2}(M) \ra_*$,
where $\mathrm{p}_{\beta_1}(M), \mathrm{p}_{\beta_2}(M)$ are two $2$-dimensional vectors. 
This happens if and only if $\mathrm{p}_{\beta_1}(M), \mathrm{p}_{\beta_2}(M)$ are linearly dependent, 
so the corresponding Schubert cell are the matrices of rank $\leq 1$. That is, $[\cD_{i,j}] = q^4 - [\GL_2] = q^3 + q^2 - q$.
\end{enumerate}

\begin{remark}
These are the only cases that we will find when computing irreducible representations for $r = 4$. 

Observe that the cases $\epsilon_{\alpha,1} = \epsilon_{\beta}$ or $\varepsilon_{\alpha,1} = \varepsilon_{\beta}$ 
in item (\ref{item:mult-2}) do not need to be considered since in that case there is triple eigenvector in either $A$ 
or $B$ and a double eigenvector in the other matrix, so all the representations are reducible (c.f.\ Remark \ref{rmk:reducible}).
\end{remark}

A formula for $\cG_\tau$ is easier to obtain. Recall the definition (\ref{eqn:cG}). Any $P\in \cG_\varrho$
induces an automorphism of $\Gr_i(\varrho)$, respecting the components. By Schur lemma, it acts on the
isotypic components, so producing $\prod_{i=1}^s\prod_{j=1}^{s_i} \GL_{m_{i,j}}$. This has
to be completed with upper diagonal blocks. The matrix $P$ must fix the diagonal matrix $A$, so the upper diagonal block of $P$ corresponding
to $\alpha, \beta$ has to be $S = Q_\alpha \Theta Q_\beta^{-1}$, as in Theorem \ref{thm:11}. The matrix $\Theta$ has
$\sum_k a_k^\alpha a_k^\beta$ free entries, so adding up on the possible matrices, the upper diagonal parts form a vector space of dimension
 $$
 D= \sum_{d(\beta)>d(\alpha)} \sum_k a_k^\alpha a_k^\beta\, .
 $$
Hence
\begin{equation}\label{eq:gauge-group}
  [\cG_\tau]= q^D \prod_{i=1}^s\prod_{j=1}^{s_i} [\GL_{m_{i,j}}].
\end{equation}

\subsection*{Multiplicity}

Recall that, given a multiset $X = \left\{x_1^{m_1}, \ldots, x_n^{m_n}\right\}$, we can consider the subgroup $S_X \subset S_n$ of 
permutations $\pi$ that preserve the multiplicity of the multiset, that is $m_{\pi(i)} = m_{i}$ for all $1 \leq i \leq n$.

Now, consider a configuration of eigenvalues $\kappa = (\bm{\epsilon}, \bm{\varepsilon})$ and let $S_\kappa = S_{\bm{\epsilon}} \times 
S_{\bm{\varepsilon}}$. Given a type $\tau = (\xi, \sigma) \in \cT_\kappa$ and $\pi \in S_\kappa$ we set
$\pi \cdot \tau = (\xi, \pi \cdot \sigma)$. That is, the new type has the same shape $\xi$ but the new eigenvalues, 
$\pi \cdot \sigma$, are the ones of $\sigma$ permuted according to the permutation $\pi$. This induces an action of 
$S_{\kappa}$ on $\cT_\kappa$. 

Moreover, by the computations above, we have that $[R(\tau)] = [R(\pi \cdot \tau)]$. This implies that, if we denote the length of 
the orbit of $\tau$ by $m_\kappa(\tau)$, called the multiplicity of the type $\tau$, we can write
$$
	\sum_{\tau \in \cT_\kappa} [R(\tau)] = \sum_{[\tau] \in \cT_\kappa/S_\kappa} m_\kappa(\tau)[R(\tau)].
$$
This formula reduces drastically the number of types that need to be considered and it is the version that we will use in 
Sections \ref{sec:rank2} and \ref{sec:rank4}.

\section{Count of components}\label{sec:count-comp}

In this section, we are going to count the valid configurations of eigenvalues $\kappa = (\kappa_1, \kappa_2)$. As we will see, we need 
to consider different cases according to the number of repeated eigenvalues in $\kappa_1$ and $\kappa_2$. We will analyze 
completely the patterns that appear for $r = 4$, and the general case when $r, n, m$ are coprime is studied in Section \ref{subsec:coprime}.

We fix $n,m$ and the torus knot group (\ref{eqn:torus}). Fix also $r>0$ and recall that we are interested in the variety 
(\ref{eqn:cXr}), and particularly, we want to compute $\cR_r^{\irr}$ using (\ref{eqn:working-eqn1}). By Lemma \ref{lem:red}
these are divided according to a root of unity $\varpi\in \mu_r$. 
If $\epsilon_1,\ldots, \epsilon_r$ are the eigenvalues of $A$, then $\epsilon_i^n=\varpi$ for all $i$, and $\epsilon_1\cdots 
\epsilon_r =1$. To count the distribution of eigenvalues, we introduce the set
 \begin{equation}\label{eqn:Nn}
	\hat N_{n,r}(\varpi) = \big\{(\epsilon_1, \ldots, \epsilon_r) \in \mu_{nr}^r \,|\, \epsilon_i^n=\varpi, \epsilon_1\cdots 
	\epsilon_r = 1 \big\}.
 \end{equation}
There is an action of the symmetric group $S_r$ on $\hat N_{n,r}(\varpi)$ by permutation of the elements 
$\epsilon_i$, and we denote by $N_{n,r}(\varpi, \xi) = \hat N_{n,r}(\varpi)/S_r$ the corresponding unordered set. Therefore, 
the possible configurations of eigenvalues (\ref{eqn:kappa}) are pairs
$$
	\kappa = (\kappa_1, \kappa_2) \in M_{n, m,r} = \bigsqcup_{\varpi \in \mu_r} N_{n,r}(\varpi) \times N_{m,r}(\varpi).
$$

Moreover, consider a partition $\pi = \left\{1^{e_1}, 2^{e_2}, \ldots, r^{e_r}\right\}$ of $r$ with $e_i \geq 0$ for $1 \leq i \leq r$, 
and $r = \sum_i i e_i$. Let us denote $S_\pi = (S_{1})^{e_1} \times (S_{2})^{e_2} \times \ldots \times (S_{r})^{e_r}$ as a subgroup of 
$S_r$. We will denote by $\hat N_{n,r}^\pi(\varpi)$ the set of points of $\hat N_{n,r}(\varpi)$ whose stabilizer under the action 
of $S_r$ is (conjugated to) $S_\pi$. In other words, $\hat N_{n,r}^\pi(\varpi)$ is the set of tuples of $\hat N_{n,r}(\varpi)$ with $e_i$ 
collections of $i$ equal elements, for $1 \leq i \leq r$. Denote also by $N_{n,r}^\pi(\varpi) = \hat N_{n,r}^\pi(\varpi)/S_r$ 
the collection of unordered tuples and set
 \begin{equation}\label{eqn:cxc}
 M_{n,m,r}^{\pi_1, \pi_2} = \bigsqcup_{\varpi \in \mu_r} N_{n,r}^{\pi_1}(\varpi) \times N_{m,r}^{\pi_2}(\varpi).
 \end{equation}

For given partitions $\pi_1, \pi_2$, 
if $\kappa, \kappa' \in M_{n,m,r}^{\pi_1, \pi_2}$ then we have $[\cR_{r, 
\kappa}^{\red}] = [\cR_{r, \kappa'}^{\red}]$. This follows from (\ref{eq:dec-irred-kappa}) thanks
to the formula in Corollary \ref{cor:formula-X}, which shows that the motives only depend
on how many eigenvalues are repeated and not their particular values. The formula only
gives the motives under the assumptions of Corollary \ref{cor:no-action-discrete}, which
is all that we need. However, working with the diagram (\ref{eqn:diagr}), we can push-down to $\cI(\tau)$
and show that the classes are equal even if we do not have explicit formulas.

The above means that it is enough to compute $[\cR_{r, \kappa}^{\red}]$ for one $\kappa \in 
M_{n,m,r}^{\pi_1,\pi_2}$ (which is a motive independent of $n$ and $m$) 
and then multiply the result by the number of components 
$|M_{n,m,r}^{\pi_1, \pi_2}|$ (a combinatorial number dependent only on $n,m,r$). That is, (\ref{eqn:working-eqn1})
and (\ref{eqn:yyy}) read as
 $$
 [\cR_r^{\irr}]=\sum_{\pi_1,\pi_2} c(\pi_1,\pi_2) [R_{\kappa(\pi_1,\pi_2)}^{\irr}],
 $$
where $c(\pi_1,\pi_2) =|M_{n,m,r}^{\pi_1, \pi_2}|$, and $\kappa(\pi_1,\pi_2)$ is one element in $M_{n,m,r}^{\pi_1, \pi_2}$.

Now we compute the value of $c(\pi_1,\pi_2)$ for some classes of partitions, which will cover at least all situation
for $r=4$. We take $k=\CC$ in the computations below, but the result holds as well for an algebraically closed field of
zero characteristic, as it only involves roots of unity.

\subsection{Partition $ \pi = \left\{1^r\right\}$}
We fix $\alpha_1,\ldots,\alpha_n$ the $n$-th roots of unity, $\alpha_j=e^{2\pi i j /n}$. Since $\varpi\in \mu_r$,
we can write $\varpi=\psi^a$ for some $a=0,\ldots, r-1$, where $\psi=e^{2\pi i/r}$. 
Recalling (\ref{eqn:Nn}), we take
$\zeta=e^{2\pi i /rn}$ so that
 \begin{equation*}
  \epsilon_j = \zeta^a \alpha_{i_j} , \quad 1\leq i_1,\ldots,i_r\leq n, i_j\neq i_k \text{ for } j\neq k.
  \end{equation*}
We introduce the multi-index $I=\{i_1,\ldots, i_r\}$ and consider the sum
 $$
 \prod_{j=1}^n (1+ \zeta^a\alpha_j t) = \sum_{r\geq 0} \sum_{|I|=r} \zeta^{ar}  \alpha_{i_1}\cdots \alpha_{i_r} t^r .
 $$
We want to count those products $\zeta^{ar}\alpha_{i_1}\cdots \alpha_{i_r}=1$. For this we make use of the ``projection'' operator
 $$
  \sum_{k=0}^{n-1} \alpha^k =\left\{ \begin{array}{ll} n , & \text{if } \alpha=1, \\ 0, & \text{otherwise}, \end{array}\right.
  $$
for $\alpha\in \mu_n$. Hence
 $$
 \sum_{k=0}^{n-1} \prod_{j=1}^n (1+ \zeta^{ak}\alpha_j^k t) = 
 \sum_{r\geq 0} \sum_{|I|=r} \sum_{k=0}^{n-1} 
 \zeta^{akr} ( \alpha_{i_1}\cdots \alpha_{i_r})^k t^r = \sum_{r\geq 0} n\, |N_{n,r}^\pi (\omega)|\, t^r.
 $$
Now we redo the sum as
 $$
  \sum_{r\geq 0} n\, |N_{n,r}^\tau(\omega)| t^r = \sum_{d|n} 
  \sum_{s\in \ZZ^*_{e}} \prod_{j=1}^n \left(1+ (\zeta^{ad}\alpha_j^d)^s 
  t\right) ,
 $$
 where $d|n$, $e=n/d$ and $\ZZ_e^*$ are the units of $\ZZ_e$, interpreting that $\ZZ_1^*=\{0\}$. The elements $\alpha_j^d$ 
 are the $e$-th
 roots of unity, repeated $d$ times. We subdivide them into groups.
 Let us consider the first group $\alpha_1^d,\ldots, \alpha_e^d$. The polynomial 
 \begin{equation}\label{eqn:lll}
 \prod_{j=1}^{e} \left(1+ (\zeta^{ad}\alpha_j^d)^s t\right) =(1+(-1)^{e+1} \psi^{as} t^e), 
 \end{equation}
 since the $e$ complex numbers $-(\zeta^{ad}\alpha_1^d)^s, \ldots,- (\zeta^{ad}\alpha_n^d)^s$ are exactly those 
$z$ such that $ z^e=(-1)^e\zeta^{ad s e}= (-1)^e \zeta^{asn}=(-1)^e\psi^{as}$. The other groups give the
same answer, so we have to take the $d$-th power of (\ref{eqn:lll}).
Putting all together,
 $$
  \sum_{r\geq 0} n\, |N_{n,r}^\pi (\varpi)| t^r = \sum_{e|n} \sum_{s\in \ZZ^*_{e}} (1+ (-1)^{e+1} \psi^{as} t^e)^{n/e},
 $$
 and taking the term $t^r$,
    \begin{equation}\label{eqn:Npi}
       |N_{n,r}^\pi(\varpi)| =\frac1n \sum_{e|n, e|r} \sum_{s\in \ZZ^*_{e}} (-1)^{r+r/e} \binom{n/e}{r/e}  \psi^{as r/e}\, .
  \end{equation}
Here $\psi^{r/e}$ is a primitive $e$-root of unity, and $\psi^{sr/e}$ runs over all primitive $e$-th roots of unity
$\mu_e^*$. The final formula is:
    \begin{equation}\label{eqn:Npi2}
       |N_{n,r}^\pi (\varpi)| = \sum_{e|n, e|r} \frac{ (-1)^{r+r/e} }{n} \binom{n/e}{r/e}  \sum_{\nu\in \mu^*_e} \nu^{a}\, .
  \end{equation}

\begin{remark}\label{rem:Mob1}
The last term is can be computed via the M\"obius inversion formula: 
 $$
  \sum_{\nu\in \mu^*_e} \nu^{a} = \sum_{y|e} \mu(y)\, \textrm{SR}(e/y;a)
  $$
where $\mu(y)$ is the M\"obius function, and $\textrm{SR}(e;a)=\sum_{\nu\in \mu_e} \nu^a$. So 
$\textrm{SR}(e;a)= e$ when $e|a$ and $0$ otherwise.
\end{remark}
 
 \subsection{Partition $\pi' = \left\{1^{r-2},2^1\right\}$}
We start with the the sum
 $$
  \sum_{p=1}^n \zeta^{2a}\alpha_p^2t^2 \prod_{j\neq p} (1+ \zeta^a\alpha_j t) = 
   \sum_{r\geq 0} \sum_{|I|=r-2 \atop
   p\notin I} \zeta^{ar}  \alpha_p^2\alpha_{i_3}\ldots \alpha_{i_r} t^r\, .
 $$
Taking the projection operator as before,
 $$
 \sum_{k=0}^{n-1} \sum_{p=1}^n \zeta^{2ak}\alpha_p^{2k} t^2 \prod_{j\neq p} (1+ \zeta^{ak}\alpha_j^k t) 
 = \sum_{r\geq 0} n\, |N_{n,r}^{\pi'}(\varpi)|\, t^r.
 $$
Now we redo the sum, using that $(1+\zeta^{ak}\alpha_p^kt)^{-1}=\sum\limits_{l\geq 0} (-1)^l\zeta^{lak}\alpha_p^{lk}t^l$,
\begin{equation}\label{eqn:ttt} 
\begin{aligned}
  \sum_{r\geq 0} n\, | N_{n,r}^{\pi'} &(\varpi)|\, t^r =  \sum_{k=0}^{n-1} \sum_{p=1}^n  \sum_{l=2}^\infty (-1)^l \zeta^{lak}
  \alpha_p^{lk} t^l
  \prod_{j=1}^n (1+ \zeta^{ak}\alpha_j^k t)  \\
  &= \sum_{p=1}^n \sum_{l=2}^\infty \sum_{d|n} \sum_{s\in \ZZ^*_{e}} (-1)^l \zeta^{lads}\alpha_p^{lds} t^l 
      \prod_{j=1}^n (1+ (\zeta^{ad}\alpha_j^d)^s t) \\
  &=  \sum_{l=2}^\infty \sum_{e|n} \sum_{s\in \ZZ^*_{e}} (-1)^l \zeta^{lads} \left(\sum_{p=1}^n\alpha_p^{lds}\right) t^l
    (1+ (-1)^{e+1}\psi^{as} t^e)^{n/e} \\
  &=  n \sum_{e|n} \sum_{s\in \ZZ^*_{e}}   \sum_{l\geq 2 \atop n|lds}^\infty  (-1)^l \zeta^{lads}  t^l
    (1+ (-1)^{e+1} \psi^{as} t^e)^{n/e} .
 \end{aligned}
 \end{equation}
 Here $n|lds$ is equivalent to $e|l$. The term with $e=1$ is 
  $$
  n\sum_{l\geq2} (-1)^l \zeta^{l ans} t^l     (1+ \psi^{as} t)^{n} = n \psi^{2as} t^2   (1+ \psi^{as} t)^{n-1}.
  $$
The term with $e\geq 2$ reduces to
  \begin{align*}
  n \sum_{e|n} \sum_{s\in \ZZ^*_{e}}  &  \sum_{e|l}  (-1)^l \zeta^{lads}  t^l (1+ (-1)^{e+1}\psi^{as} t^e)^{n/e}  \\
 &=n \sum_{e|n} \sum_{s\in \ZZ^*_{e}}  (-1)^e \psi^{as} t^e  (1+ (-1)^{e+1}\psi^{as} t^e)^{n/e-1}.
  \end{align*}
Taking the term $t^r$ in (\ref{eqn:ttt}), and recalling that $\psi^r=1$, we have
  \begin{equation}\label{eqn:Npi'}
  |N_{n,r}^{\pi'}(\varpi)| = \binom{n-1}{r-2} -
      \sum_{e|n, e|r \atop e\geq 2} \sum_{s\in \ZZ^*_{e}}  (-1)^{r +r/e} \binom{n/e-1}{r/e-1}  \psi^{as r/e}.
   \end{equation}

\begin{remark}\label{rem:6.2}
There is an alternative way of proving the expression (\ref{eqn:Npi'}). 
Write $L = |N_{n,r}^{\pi}(\varpi)|$ and $L' = |N_{n,r}^{\pi'}(\varpi)|$. We start with the expression
 $$ 
 \sum_{p=1}^n (1-\zeta^{2a} \alpha_p^2 t^2) \prod_{i\neq p} (1 + \zeta^a \alpha_i t).
 $$
The terms with $\alpha_p^2$ contribute $ - L'\,t^r$, but there are terms
$\sum_{p=1}^n \prod_{i\neq p} (1 + \zeta^a \alpha_i t)$ which contribute with $(n-r) L\, t^r$.
The coefficient $n-r$ comes from the fact that $\alpha_{i_1}\cdots \alpha_{i_r}$ appears in
the $n-r$ summands for $p\in\{1,\ldots,n\}-\{i_1,\ldots, i_r\}$.
Now we project as we have done before, so that
$n(-L'  + (n-r) L)$ equals the $t^r$-coefficient of
 $$
 \sum_{k=1}^n \sum_{p=1}^n (1-\zeta^{2ak} \alpha_p^{2k} t^2) \prod_{i\neq p} (1 + \zeta^{ak} \alpha_i^k t)
 = \sum_{k=1}^n \sum_{p=1}^n (1 -\zeta^{ak} \alpha_p^{k} t) \prod_{i=1}^n (1+ \zeta^{ak} \alpha_i^k t).
 $$
The first summand gives the contribution
$n^2 \sum_{e|n} \sum_{s\in\ZZ_e^*} (1+ (-1)^e \psi^{as}t^{e+1})^{n/e}$. The coefficient of $t^r$ is again
$n^2 L$.
The second summand only contributes for $k=n$ (that is, when $e=1$), and so produces
$-\psi^{a} n t (1+\psi^{a} t)^{n}$.
Altogether this gives the equality
 $$
 -nL'  + n(n-r) L = n^2 L - n \psi^{ar} \binom{n}{r-1},
 $$
which is rewritten, using that $\psi^r=1$, as
 \begin{equation}\label{eqn:LLL}
 L'= - rL + \binom{n}{r-1}.
 \end{equation}
Finally we use the expression of $L$ in (\ref{eqn:Npi}), 
 $$
L = \sum_{e|n, e|r} \frac{(-1)^{r+r/e}}{n}  \binom{n/e}{r/e} \sum_{s\in\ZZ_e^*} \psi^{as r/e}\, .
 $$
The coefficient corresponding to $e=1$ in (\ref{eqn:LLL}) is 
$$
 - \frac{r}{n} \binom{n}{r} + \binom{n}{r-1} = \binom{n-1}{r-2}.    
$$
Hence 
 $$
L'=  \binom{n-1}{r-2} - 
  \sum_{e|n,  e|r \atop e\geq 2} (-1)^{r+r/e} \binom{n/e-1}{r/e-1} \sum_{s\in\ZZ_e^*} \psi^{as r/e},
 $$
using that $\binom{n/e}{r/e}=\frac{n}{r} \binom{n/e-1}{r/e-1}$. 
\end{remark}

 \subsection{Partition $\pi'' = \left\{1^{r-3},3^1\right\}$}
Working as in Remark \ref{rem:6.2} for  $L'' = |N_{n,r}^{\pi''}(\varpi)|$, and starting with 
 $$ 
 \sum_{p=1}^n (1+\zeta^{3a} \alpha_p^3 t^3) \prod_{i\neq p} (1+\zeta^a \alpha_i t),
 $$
we reach the equation $L''+(n-r) L = n L - \binom{n}{r-1} +$ the $t^r$-coefficient of the expression 
$\frac1n \sum\limits_{k=1}^n\sum\limits_{p=1}^n \zeta^{2ak}\alpha_p^{2k}t^2 \prod\limits_{i=1}^n (1+\zeta^{ak}\alpha_i^kt)$.
This coefficient is 
 $$
 \binom{n}{r-2} +  \sum_{2|n,2|r} (-1)^{r/2-1} \binom{n/2}{r/2-1}\psi^{ar/2} ,
 $$ 
where the sum means that there is only one term that appears when $2|n$ and $2|r$.
Substituting the value of $L$ and simplifying, we get 
\begin{equation}\label{eqn:pi''}
 \begin{aligned}
 |N_{n,r}^{\pi''}(\varpi)|= L'' =&\, \binom{n-1}{r-3} + 
  \sum_{e|n,  e|r \atop e\geq 3} (-1)^{r+r/e} \binom{n/e-1}{r/e-1} \sum_{s\in\ZZ_e^*} \psi^{as r/e} \\
  &  -   \sum_{2|n,  2|r } (-1)^{r/2} \binom{n/2-1}{r/2-2} \psi^{ar/2}\, .
 \end{aligned}
 \end{equation}

\subsection{Computation of the number of components}\label{sec:number-components}

With the results obtained in the sections above, we can compute $c(\pi_1,\pi_2)=
|M_{n,m,r}^{\pi_1, \pi_2}|$, for $\gcd(n,m) = 1$.

We start with the case $(\pi, \pi)$, where we have 
 \begin{align*}
  |M_{n,m,r}^{\pi, \pi}| &= \sum_{a=0}^{r-1} |N_{n,r}^\pi(\psi^a)|\, |N_{m,r}^\pi(\psi^a)| \\
  &= \sum_{e|n,e|r \atop f|m, f|r} 
  \frac{(-1)^{r/e+r/f}}{nm} \binom{n/e}{r/e}\binom{m/f}{r/f} 
  \sum_{k_1,k_2} \sum_{a=0}^{r-1} e^{2\pi i a \left(\frac{k_1}e+\frac{k_2}f\right)},
  \end{align*}
  where $k_1\in \ZZ_e^*$, $k_2\in \ZZ_f^*$. Now $e^{2\pi i \left(\frac{k_1}e+\frac{k_2}f\right)}=
   e^{2\pi i (k_1f+k_2e)/ef}$. 
  As $e|n, f|m$ and $\gcd(n,m)=1$, we have $\gcd(e,f)=1$. Hence $\ZZ_e^* \x \ZZ_f^* \cong \ZZ_{ef}^*$, via the map $(k_1,k_2)\mapsto
  k_1f+k_2e$. So the sum only contributes for $e=f=1$, and it gives
%
%
\begin{equation}\label{eqn:MP-bien}
  |M_{n,m,r}^{\pi, \pi}|= \frac1{nm} \binom{n}{r}\binom{m}{r} r =\frac1r \binom{n-1}{r-1}\binom{m-1}{r-1}.
 \end{equation}
 
This formula agrees with \cite[Thm.~6.1]{MP}, which says that the character variety $ \cR_r$ 
has dimension $(r-1)^2$, and the number of 
irreducible components of this dimension is (\ref{eqn:MP-bien}). 
  
 \begin{remark}
 We can get a formula when $\gcd(n,m)>1$ as well. We will not develop this.
 \end{remark}
 
For the case $(\pi,\pi')$, we use (\ref{eqn:LLL}) which says that $|N^{\pi'}_{m,r}|=-r|N^{\pi}_{m,r}|+\binom{m}{r-1}$, hence
 \begin{align*}
  |M_{n,m,r}^{\pi, \pi'}| &= \sum_{a=0}^{r-1} |N_{n,r}^{\pi}(\psi^a)|\, |N_{m,r}^{\pi'}(\psi^a)| 
   = -r |M_{n,m,r}^{\pi, \pi}| + \binom{m}{r-1} \sum_{a=0}^{r-1} |N_{n,r}^{\pi}(\psi^a)| .
   \end{align*}
Now use (\ref{eqn:Npi2}) and
 $\sum\limits_{a=0}^{r-1} \sum\limits_{\nu\in \mu_e^*} \nu^a=\sum\limits_{\nu\in \mu_e^*}  \sum\limits_{a=0}^{r-1} \nu^a =0$
 for $e>1$. Then
%
%
\begin{equation}\label{eqn:MP-bien2}
  |M_{n,m,r}^{\pi, \pi'}| = -\binom{n-1}{r-1}\binom{m-1}{r-1} + \binom{m}{r-1} \frac{r}{n} \binom{n}{r}
   = \binom{n-1}{r-1} \binom{m-1}{r-2}.
\end{equation}

A similar argument shows the case $(\pi',\pi')$,
\begin{equation}\label{eqn:MP-bien3}
 \begin{aligned}
  |M_{n,m,r}^{\pi', \pi'}| =&\, r \binom{n-1}{r-1}\binom{m-1}{r-1} - r \binom{m}{r-1} \frac{r}{n} \binom{n}{r}
  - r \binom{n}{r-1} \frac{r}{m} \binom{m}{r}  \\ &\, + r \binom{n}{r-1}\binom{m}{r-1} 
   = r \binom{n-1}{r-2} \binom{m-1}{r-2}.
   \end{aligned}
   \end{equation}

Regarding the partition $\pi''$, we only write the formula for $(\pi,\pi'')$, as this is the only case that we need for $r=4$.
Using expression (\ref{eqn:pi''}) as before we get
\begin{equation}\label{eqn:MP-bien4}
  |M_{n,m,r}^{\pi, \pi''}| =\binom{n-1}{r-1} \binom{m-1}{r-3}.
  \end{equation}

For rank $r=4$, the only allowable partitions are 
$\pi=\{1^4\}$, $\pi'=\{1^2,2^1\}$, $\pi''=\{3^1,1^1\}$, $\pi'''=\{2^2\}$. All cases have been given
except $\pi'''=\{2^2\}$. For such case, $\mu_4=\{1,-1,i,-i\}$, and 
$$
 N^{\{2^2\}}_{n,4}(\varpi)=\{(\epsilon_1,\epsilon_2) | \epsilon_1\neq \epsilon_2, 
 \epsilon_1^n=\varpi, \epsilon_2^n=\varpi, \epsilon_1^2\epsilon_2^2=1\}.
$$
The map $(\epsilon_1,\epsilon_2) \mapsto (\epsilon_1^2,\epsilon_2^2)$ gives that 
 $$
  |N^{\{2^2\}}_{n,4}(\varpi)|= |N^{\{1^2\}}_{n,2}(\varpi^2)| +A_n(\varpi) , 
 $$
where 
$A_n(\varpi)$ counts the number of $\epsilon_1\neq \epsilon_2$ with $\epsilon_1^2=\epsilon^2_2$,   
 $\epsilon_1^n=\epsilon_2^n=\varpi$ and $\epsilon_1^2\epsilon_2^2=1$. This means that
 $(\epsilon_1,\epsilon_2)=(1,-1),(-1,1),(i,-i),(-i,i)$. Then
$$
A_n(\varpi)= \left\{ \begin{array}{ll}
 2, \qquad & n\equiv 2\pmod 4, \, \varpi=\pm 1 \\
 4, & n\equiv 0\pmod 4, \, \varpi=1  \\
 0, & \text{otherwise} \end{array}\right\} = \sum_{e|n, e|4} \sum_{s\in \ZZ_e^*} \psi^{as \, 4/e}\, .
 $$

We use this formula to compute $|M^{\pi_1,\pi'''}_{n,m,4}|$, for $\pi_1=\pi,\pi'$. Note that the case
$\pi_1=\pi'''$ is not needed due to Proposition \ref{prop:8.1}. After some manipulations,
\begin{equation}\label{eqn:MP-bien5}
\begin{aligned}
  |M_{n,m,4}^{\pi, \pi'''}| & =\frac12 \binom{n-1}{3} (m-1), \\
  |M_{n,m,4}^{\pi', \pi'''}| &= 2 \binom{n-1}{2} (m-1).
\end{aligned}
  \end{equation}

\subsection{Case of coprime indices}\label{subsec:coprime}

In this section we take a different approach to the computation of the number of components. This approach will be valid for any 
partitions $\pi_1, \pi_2$ of $r$ but, on the other hand, it will only work when we add the extra hypothesis 
$\gcd(n,r) = \gcd(m,r) = 1$.

Consider the group isomorphism $\ZZ_s \to \mu_s$, $k \mapsto e^{2\pi i k/s}$. Under this isomorphism, let $\varpi = e^{2\pi i k/r}$ 
for some $k \in \ZZ_r$. Introduce the sets
$$
	\tilde N_{n,r}(k, l) = \left\{(x_1, \ldots, x_r) \in \ZZ_{nr}^r \,|\, nx_i \equiv k \textrm{ (mod $r$)}, x_1 + \cdots + x_r \equiv l 
	\textrm{ (mod $n$)}\right\}.
$$
Then the set $\hat N_{n,r}(\varpi)$ in (\ref{eqn:Nn}) has the same cardinality as $\tilde N_{n,r}(k,0)$.

\begin{proposition}\label{prop:form-N}
If $\gcd(n,r)=1$ then
$$
	|\tilde N_{n,r}(k, l)| = |\left\{(\alpha_1, \ldots, \alpha_r) \in \ZZ_n^r\,|\, \alpha_1 + \cdots + \alpha_r = l\right\}|,
$$ for any $k \in \ZZ_r$ and $l \in \ZZ_n$.

\begin{proof}
Consider the homomorphism $\varphi: \ZZ_{nr}^r \to \ZZ_n \times \ZZ_r^r$,
given by $\varphi(x_1, \ldots, x_r) = ( x_1 + \cdots + x_r \textrm{ (mod $n$)}, nx_i \textrm{ (mod $r$)})$. We have that $\tilde N_{n,r}(k, 
l) = \varphi^{-1}(l, k, k, \ldots, k)$. In order to identify this map, observe that $\varphi$ factors as
\[
\begin{displaystyle}
   \xymatrix
   { \ZZ_{nr}^r \ar[r]^{\phi^r} \ar[rd]_{\varphi} & \ZZ_n^r \times \ZZ_r^r \ar[d]^{f \times g^r} \\
    & \ZZ_n \times \ZZ_r^r\, ,
   }
\end{displaystyle}   
\]
where the horizontal map is the isomorphism $\phi: \ZZ_{nr} \cong \ZZ_n \times \ZZ_r$ given by the map 
$x \mapsto (x \pmod n, x \pmod r)$. For the vertical arrow we have $f: \ZZ_n^r \to \ZZ_n$ given by $f(\alpha_1, \ldots, \alpha_r) = 
\alpha_1 + \ldots + \alpha_r$, and $g: \ZZ_r \to \ZZ_r$ is $g(\beta) = n\beta$. Therefore,
$$
	\phi^r(\tilde N_{n,r}(k,l)) = \left\{(\alpha_1, \ldots, \alpha_r) \in \ZZ_n^r\,|\, \alpha_1 + \cdots + \alpha_r = l\right\} \times
	\left\{(\beta_1, \ldots, \beta_r) \in \ZZ_r^r\,|\, n\beta_i = k\right\}.
$$
Moreover, since $n$ is invertible in $\ZZ_r$, the last factor is just a point. Hence
$$
	|\tilde N_{n,r}(k,l)| = |\phi^r(\tilde N_{n,r}(k,l))| = |\left\{(\alpha_1, \ldots, \alpha_r) \in \ZZ_n^r\,|\, \alpha_1 + \cdots + \alpha_r = 
	l\right\}|.
$$
\end{proof}

\end{proposition}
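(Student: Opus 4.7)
The plan is to exploit the hypothesis $\gcd(n,r)=1$ via the Chinese Remainder Theorem, which gives a ring isomorphism $\phi: \ZZ_{nr} \to \ZZ_n \times \ZZ_r$, $x \mapsto (x \bmod n, x \bmod r)$. Taking $r$-fold products yields $\phi^r: \ZZ_{nr}^r \xrightarrow{\sim} \ZZ_n^r \times \ZZ_r^r$, and the strategy is to show that the two defining conditions of $\tilde N_{n,r}(k,l)$ decouple cleanly under this isomorphism: the sum condition becomes a condition purely on the $\ZZ_n^r$-factor, while the eigenvalue condition $nx_i \equiv k \pmod r$ becomes a condition purely on the $\ZZ_r^r$-factor.

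First I would write each $x_i \in \ZZ_{nr}$ as $\phi(x_i) = (\alpha_i, \beta_i)$. The condition $x_1 + \cdots + x_r \equiv l \pmod n$ reads off as $\alpha_1 + \cdots + \alpha_r = l$ in $\ZZ_n$. For the other condition, since reduction mod $r$ is a ring homomorphism, $nx_i \bmod r = n\beta_i$ in $\ZZ_r$, so the condition $nx_i \equiv k \pmod r$ becomes $n\beta_i = k$ in $\ZZ_r$, independently for each $i$. Thus
\[
	\phi^r(\tilde N_{n,r}(k,l)) = \Bigl\{(\alpha_i) \in \ZZ_n^r \,\Big|\, \sum_i \alpha_i = l\Bigr\} \times \bigl\{(\beta_i) \in \ZZ_r^r \,\big|\, n\beta_i = k \text{ for all } i\bigr\}.
\]

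The key point is then that $\gcd(n,r) = 1$ makes $n$ a unit in $\ZZ_r$, so the equation $n\beta_i = k$ has the unique solution $\beta_i = n^{-1}k$. Therefore the second factor consists of a single element, and taking cardinalities (using that $\phi^r$ is a bijection) yields the stated equality. There is no real obstacle here beyond bookkeeping; the only subtle point is verifying that reduction mod $r$ commutes with multiplication by $n$ at the level of the isomorphism, which is immediate since $\phi$ is a ring map. The hypothesis $\gcd(n,r)=1$ is used in exactly two places: to apply CRT in the first place, and to invert $n$ in $\ZZ_r$ at the end.
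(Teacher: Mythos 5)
Your proof is correct and is essentially the same as the paper's: both apply the CRT isomorphism $\phi^r:\ZZ_{nr}^r \to \ZZ_n^r\times\ZZ_r^r$ to decouple the sum condition (on the $\ZZ_n$-factor) from the eigenvalue condition (on the $\ZZ_r$-factor), then use invertibility of $n$ in $\ZZ_r$ to see the latter factor is a single point.
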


\begin{corollary}
If $\gcd(n,r)=1$, then $|\tilde N_{n,r}(k,l)|$ is independent of $k$ and $l$.

\begin{proof}
The independence of $k$ follows from Proposition \ref{prop:form-N}. For the independence of $l$, observe that the map
$(x_1, \ldots, x_r) \mapsto (x_1 + r^{-1}(l'-l), \ldots, x_r + r^{-1}(l'-l))$ is a bijection between $\tilde N_{n,r}(0, l)$ and
$\tilde N_{n,r}(0, l')$, for any $l,l' \in \ZZ_n$, where $r^{-1}$ is the inverse of $r \pmod n$, and lifted to $\ZZ_{nr}$.
\end{proof}
\end{corollary}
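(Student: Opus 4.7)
The plan is to deduce both assertions directly from Proposition \ref{prop:form-N}, which rewrites $|\tilde N_{n,r}(k,l)|$ as the number of tuples $(\alpha_1, \ldots, \alpha_r) \in \ZZ_n^r$ with $\alpha_1 + \cdots + \alpha_r = l$.

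For independence of $k$, nothing needs to be done: the right-hand side of the formula in Proposition \ref{prop:form-N} does not involve $k$, so $|\tilde N_{n,r}(k,l)|$ depends only on $l$ (and of course on $n$ and $r$). This is immediate.

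For independence of $l$, I would exhibit an explicit bijection. Given $l, l' \in \ZZ_n$, the translation
\[
(\alpha_1, \alpha_2, \ldots, \alpha_r) \longmapsto (\alpha_1 + (l'-l), \alpha_2, \ldots, \alpha_r)
\]
is clearly a bijection of $\ZZ_n^r$ that sends the affine hyperplane $\{\alpha_1 + \cdots + \alpha_r = l\}$ onto $\{\alpha_1 + \cdots + \alpha_r = l'\}$. Combined with Proposition \ref{prop:form-N}, this yields $|\tilde N_{n,r}(k,l)| = |\tilde N_{n,r}(k,l')|$. Note that this half actually requires no hypothesis on $\gcd(n,r)$; the coprimality condition was already consumed in the proof of the proposition.

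Alternatively, and more in the spirit of the author's indicated argument, one may construct the bijection directly on $\tilde N_{n,r}$ by a \emph{diagonal} shift $(x_1, \ldots, x_r) \mapsto (x_1 + c, \ldots, x_r + c)$ in $\ZZ_{nr}^r$. Preserving the eigenvalue constraint $nx_i \equiv k \pmod r$ forces $nc \equiv 0 \pmod r$, i.e.\ $c \equiv 0 \pmod r$ since $\gcd(n,r)=1$; the sum condition then reads $rc \equiv l'-l \pmod n$, and as $r$ is invertible modulo $n$ this determines $c \pmod n$. By the Chinese Remainder Theorem there is a unique such $c \in \ZZ_{nr}$, yielding a bijection $\tilde N_{n,r}(k,l) \to \tilde N_{n,r}(k,l')$. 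There is no substantive obstacle here; the only subtle point is that the coprimality hypothesis $\gcd(n,r)=1$ is exactly what makes both congruence constraints simultaneously solvable.
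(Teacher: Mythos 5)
Your proposal is correct. The argument for $k$-independence is identical to the paper's. For $l$-independence you offer two routes, and both work. Your first route (translating a single coordinate $\alpha_1 \mapsto \alpha_1 + (l'-l)$ on the $\ZZ_n^r$ side after applying Proposition \ref{prop:form-N}) is simpler than what the paper does, since it trivially produces a bijection between the two affine hyperplanes without touching the original congruence constraints; you are right that coprimality plays no further role once the proposition is invoked. Your second route is essentially the paper's argument verbatim: the paper adds the same constant $c$ to every coordinate, with $c$ a lift of $r^{-1}(l'-l)\in\ZZ_n$ to $\ZZ_{nr}$. The one thing worth flagging is that you are more careful than the paper here: the phrase ``lifted to $\ZZ_{nr}$'' glosses over the fact that $c$ must be chosen with $c\equiv 0\pmod r$ (so that the conditions $nx_i\equiv k\pmod r$ are preserved), and your CRT discussion makes this explicit. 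So your second version is a cleaned-up form of the paper's proof, while your first version is a genuinely leaner alternative.
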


\begin{corollary}\label{cor:form-N-order}
If $\gcd(n,r)=1$, then
$$
	|\tilde N_{n,r}(k,l)| = n^{r-1}.
$$
\end{corollary}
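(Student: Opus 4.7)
The plan is to combine Proposition \ref{prop:form-N} with an elementary counting argument for affine subsets of $\ZZ_n^r$. By Proposition \ref{prop:form-N}, under the hypothesis $\gcd(n,r) = 1$ we have
$$
  |\tilde N_{n,r}(k,l)| = |\{(\alpha_1,\ldots,\alpha_r) \in \ZZ_n^r \,|\, \alpha_1 + \cdots + \alpha_r = l\}|,
$$
so it suffices to count solutions of a single linear equation in $\ZZ_n^r$.

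For the counting, I would argue that the projection $(\alpha_1,\ldots,\alpha_r) \mapsto (\alpha_1,\ldots,\alpha_{r-1})$ gives a bijection between the solution set and $\ZZ_n^{r-1}$: the first $r-1$ coordinates can be chosen freely in $\ZZ_n$, and then $\alpha_r$ is uniquely determined by $\alpha_r = l - \alpha_1 - \cdots - \alpha_{r-1}$. This immediately yields the value $n^{r-1}$.

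I do not foresee any obstacle here. The only minor subtlety is to observe that one does \emph{not} need the previous corollary (independence from $k$ and $l$) for this argument, since Proposition \ref{prop:form-N} already reduces to a count that is manifestly independent of $k$ and gives the same value $n^{r-1}$ for every $l \in \ZZ_n$; the preceding corollary can instead be viewed as a consequence of this explicit count.
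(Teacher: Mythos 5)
Your proof is correct, and it takes a mildly different (and in fact more elementary) route than the paper. The paper partitions $\ZZ_n^r$ into the $n$ level sets of the sum map, invokes the preceding corollary (independence of $l$) to conclude that all $n$ pieces have equal cardinality, and divides $n^r$ by $n$. You instead directly exhibit a bijection between a single level set and $\ZZ_n^{r-1}$ by projecting onto the first $r-1$ coordinates and solving for $\alpha_r$. Your argument therefore does not depend on the preceding corollary at all, and, as you correctly point out, it could be used to \emph{derive} the $l$-independence part of that corollary rather than consume it. Both approaches are valid and short; yours is a bit more self-contained, while the paper's leans on the already-proven invariance to make the count feel immediate.
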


\begin{proof}
Clearly, $\ZZ_n^r = \bigsqcup\limits_{l=0}^{n-1} \tilde N_{n,r}(0,l)$. Since all the elements in the decomposition 
have the same cardinality, we get that $n^r = |\ZZ_n^r| = \sum\limits_{l=0}^{n-1} \tilde N_{n,r}(0,l) = n |\tilde N_{n,r}(0,l)|$.
\end{proof}

\begin{corollary}\label{cor:form-N-final}
If $\gcd(n,r)=1$ then, for any partition $\pi = \left\{1^{e_1}, 2^{e_2}, \ldots, r^{e_r}\right\}$ we have that
$$
	|N_{n,r}^\pi(k,l)| = \frac{1}{n} \begin{pmatrix}
	n\\ e_1, e_2, \ldots, e_r \end{pmatrix} = \frac{(n-1)!}{e_1! e_2! \cdots e_r! (n - e_1 - \ldots - e_r)!}
.
$$
\begin{proof}
First of all, observe that the isomorphism of the proof of Proposition \ref{prop:form-N} preserves the strata $\tilde N_{n,r}^\pi(k,l)$, so 
$|\tilde N_{n,r}^\pi(k,l)|$ is independent of $k$ and $l$. It also commutes with the action of $S_r$ so
$|N_{n,r}^\pi(k,l)| = |\tilde N_{n,r}^\pi(k,l)/S_r|$ is also independent of $k$ and $l$.

At this point, the proof is analogous to the one of Corollary \ref{cor:form-N-order}, but now observe that $\bigsqcup\limits_{l=0}^{n-1} 
N_{n,r}^\pi(0,l)$ is the collection of tuples $(\alpha_1, \ldots, \alpha_r) \in \ZZ_n^r$, up to reordering, with $e_1$ collections of different elements, $e_2$ collections of $2$ equal elements and, in general, $e_i$ collections of $i$ equal elements, for
$1 \leq i \leq r$. The total count of these sets is the multinomial number
$\begin{pmatrix}
	n\\
	e_1, e_2, \ldots, e_r
\end{pmatrix}$,
and the result follows.
\end{proof}
\end{corollary}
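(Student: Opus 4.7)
The plan is to reduce the claim to a counting of unordered $r$-tuples in $\ZZ_n^r$ with a prescribed multiplicity pattern, by upgrading the bijections used in Proposition \ref{prop:form-N} and Corollary \ref{cor:form-N-order} to ones that respect the stratification by partition type.

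First, I would observe that the isomorphism $\phi^r : \ZZ_{nr}^r \to \ZZ_n^r \times \ZZ_r^r$ from the proof of Proposition \ref{prop:form-N} is $S_r$-equivariant, since the symmetric group acts coordinate-wise on both sides. The diagonal translation $(x_1,\ldots,x_r) \mapsto (x_1+c,\ldots,x_r+c)$ used to pass from $l$ to $l'$ is likewise $S_r$-equivariant. Both maps therefore preserve the isomorphism class of the stabilizer of each tuple, so they restrict to bijections between the strata $\tilde N_{n,r}^\pi(k,l)$. Consequently $|\tilde N_{n,r}^\pi(k,l)|$, and hence $|N_{n,r}^\pi(k,l)| = |\tilde N_{n,r}^\pi(k,l)/S_r|$, does not depend on $k \in \ZZ_r$ or $l \in \ZZ_n$.

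Second, I would fix $k=0$ and sum over $l$. The subsets $\tilde N_{n,r}(0,l)$ for $l=0,\ldots,n-1$ are pairwise disjoint, and under $\phi^r$ their union corresponds to the whole group $\ZZ_n^r$, because the conditions $nx_i\equiv 0 \pmod r$ in the second factor cut out a single point (as $n$ is invertible mod $r$) and the sum condition on the first factor is removed by unioning over $l$. Since $\phi^r$ is $S_r$-equivariant, this identification descends to quotients and restricts to each $\pi$-stratum, yielding
\[
n\,|N_{n,r}^\pi(0,0)| = \sum_{l=0}^{n-1} |N_{n,r}^\pi(0,l)| = |(\ZZ_n^r)^\pi/S_r|,
\]
where $(\ZZ_n^r)^\pi/S_r$ is the set of unordered $r$-tuples in $\ZZ_n$ of multiplicity type $\pi$, i.e.\ containing $e_i$ distinct values each repeated exactly $i$ times.

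Finally, counting such tuples is elementary: choose $e_1$ distinct values in $\ZZ_n$ to appear once, then $e_2$ further distinct values to appear twice, and so on, giving
\[
\binom{n}{e_1}\binom{n-e_1}{e_2}\cdots \binom{n-e_1-\cdots-e_{r-1}}{e_r} = \binom{n}{e_1,\ldots,e_r},
\]
with the implicit final slot equal to $n-e_1-\cdots-e_r$. Dividing by $n$ gives the stated formula. The only step that requires genuine verification is the $S_r$-equivariance of the bijections inherited from Proposition \ref{prop:form-N}; everything else is bookkeeping on the counting side.
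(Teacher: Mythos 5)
Your proposal is correct and follows essentially the same route as the paper's own proof: establish $S_r$-equivariance of the bijections from Proposition \ref{prop:form-N} to get independence of $k,l$, then sum over $l$ to identify $\bigsqcup_l N_{n,r}^\pi(0,l)$ with unordered $r$-tuples in $\ZZ_n$ of multiplicity type $\pi$, count those by the multinomial coefficient, and divide by $n$. You spell out the equivariance and the passage to $\ZZ_n^r$ in slightly more detail than the paper does, but the argument is identical in substance.
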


With this result, we are ready to provide the final count.

\begin{theorem}\label{thm:coprime-components}
If $\gcd(n,r)=\gcd(m,r)=1$, then, for any $\pi= \big\{1^{e_1}, 2^{e_2}, \ldots, r^{e_r}\big\}$ and $\pi' = \big\{1^{e_1'},
 2^{e_2'}, \ldots, r^{e_r'}\big\}$, we have
$$
	|M_{n,m,r}^{\pi, \pi'}| = \frac{r}{nm} \begin{pmatrix}
	n\\
	e_1, e_2, \ldots, e_r \end{pmatrix}\begin{pmatrix}
	m\\
	e_1', e_2', \ldots, e_r'\end{pmatrix}
.
$$

\begin{proof}
Observe that $|M_{n,m,r}^{\pi, \pi'}| = \sum_{\varpi} |N_{n,r}^\pi(\varpi)|\, |N_{m,r}^{\pi'}(\varpi)|$. Hence, adding up and 
using that $|N_{n,r}^{\pi}(\varpi)|=|N_{n,r}^\pi(k,0)|$ is independent of $\varpi$, we get
$$
	|M_{n,m,r}^{\pi, \pi'}| = \sum_{\varpi \in \mu_r} |N_{n,r}^{\pi}(\varpi)| \cdot |N_{m,r}^{\pi'}(\varpi)| = r 
	|N_{n,r}^{\pi}(k, 0)| \cdot |N_{m,r}^{\pi'}(k, 0)|.
$$
Now the result follows from Corollary \ref{cor:form-N-final}.
\end{proof}

\end{theorem}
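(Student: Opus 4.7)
The proof should be essentially immediate from Corollary \ref{cor:form-N-final}, so the plan is simply to combine the additive decomposition over $\varpi\in\mu_r$ with the $\varpi$-independence of the local counts. First, I would unfold the definition (\ref{eqn:cxc}) of $M_{n,m,r}^{\pi,\pi'}$ as a disjoint union indexed by $\varpi\in\mu_r$, giving
\[
|M_{n,m,r}^{\pi,\pi'}| \;=\; \sum_{\varpi\in\mu_r} |N_{n,r}^{\pi}(\varpi)|\cdot|N_{m,r}^{\pi'}(\varpi)|.
\]

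Next, under the coprimality assumptions $\gcd(n,r)=1$ and $\gcd(m,r)=1$, Corollary \ref{cor:form-N-final} applies to both factors on the right. The key point extracted from that corollary (and already used implicitly in its proof via the isomorphism $\phi\colon\ZZ_{nr}\cong\ZZ_n\times\ZZ_r$) is that the cardinality $|N_{n,r}^{\pi}(\varpi)|$ does not depend on the particular root $\varpi\in\mu_r$: it equals $\tfrac{1}{n}\binom{n}{e_1,\ldots,e_r}$ uniformly in $\varpi$. The same holds for the $m$-side. Thus each of the $r$ terms in the sum is the same.

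The final step is to carry out the sum over $\mu_r$, which simply produces a factor of $r=|\mu_r|$:
\[
|M_{n,m,r}^{\pi,\pi'}| \;=\; r\cdot\frac{1}{n}\binom{n}{e_1,\ldots,e_r}\cdot\frac{1}{m}\binom{m}{e_1',\ldots,e_r'} \;=\; \frac{r}{nm}\binom{n}{e_1,\ldots,e_r}\binom{m}{e_1',\ldots,e_r'},
\]
which is the claimed formula. There is essentially no obstacle to overcome in this proof, since all of the substantive combinatorial content—namely, reducing the count of tuples satisfying the trace-and-determinant constraints to a multinomial count of tuples in $\ZZ_n^r$ with prescribed partition type—has been handled in Proposition \ref{prop:form-N} and Corollary \ref{cor:form-N-final}. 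The only thing to verify carefully is that the stratification by partition type $\pi$ is preserved by the isomorphism $\phi^r$, so that the $\varpi$-independence holds at the level of $N^{\pi}$ and not just $N$; but this was already observed in the proof of Corollary \ref{cor:form-N-final}, so the theorem follows immediately.
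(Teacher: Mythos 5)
Your proposal is correct and follows essentially the same argument as the paper: decompose $M_{n,m,r}^{\pi,\pi'}$ into the disjoint union over $\varpi\in\mu_r$, invoke the $\varpi$-independence of $|N_{n,r}^{\pi}(\varpi)|$ (which, as you note, was verified at the level of the $\pi$-strata in the proof of Corollary \ref{cor:form-N-final}), and sum to obtain the factor $r$. Nothing to add.
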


All formulas (\ref{eqn:MP-bien}), (\ref{eqn:MP-bien2}), (\ref{eqn:MP-bien3}), (\ref{eqn:MP-bien4}), (\ref{eqn:MP-bien5})
match exactly Theorem \ref{thm:coprime-components}. 
We conjecture that the formula of Theorem \ref{thm:coprime-components} also holds true for general $n,m$ with $\gcd(n,m)=1$. It is true for $r\leq 4$.

\section{Irreducible character varieties for $\SL_2$ and $\SL_3$} \label{sec:rank-low}

In this section, we will use the previous framework for computing the motive of 
the character variety of torus knots in the cases of rank $2$ and rank $3$. These results were first 
obtained in the papers \cite{Munoz} and \cite{MP}, respectively.

\subsection{Rank $2$}\label{sec:rank2}

By Remark \ref{rmk:reducible}, the only configuration of eigenvalues that admits irreducible representations is $\kappa = 
(\left\{\epsilon_1,\epsilon_2\right\}, \left\{\varepsilon_1,\varepsilon_2\right\})$ with $\epsilon_1 \neq \epsilon_2$ and $
\varepsilon_1 \neq \varepsilon_2$. To shorten the notation, for a type $\tau = (\xi, \sigma)$ with shape $\xi = \left(\left\{(d_{i,j}, m_{i,j})\right\}_{j=1}^{s_i}\right)_{i=1}^s$, we will denote by $\sigma_A$ and $\sigma_B$ the configuration of eigenvalues of each irreducible piece for $A$ and $B$ respectively.

In this case, $\cT_\kappa/S_\kappa$ contains the following two types of the form $\tau = (\xi, \sigma)$.

\begin{itemize}
	\item $\xi = (\left\{(1,1), (1,1)\right\})$ with eigenvalues per piece $\sigma_A = (\left\{\left\{\epsilon_1\right\}, \left\{\epsilon_2\right\}\right\})$ and $\sigma_B = (\left\{\left\{\varepsilon_1\right\}, \left\{\varepsilon_2\right\}\right\})$. For this type, we have
$$
	m_{\kappa}(\tau) = 2, \quad	[\frM_\tau^{\irr}] = 1, \quad [\cM_\tau] = 1, \quad [\cG_\tau] = (q-1)^2.
$$
Therefore $[R(\tau)] = 2 \frac{q(q+1)(q-1)^2}{(q-1)^2} = 2q^2+2q$.
	\item  $\xi = (\left\{(1,1)\right\}, \left\{(1,1)\right\})$ with eigenvalues per piece $\sigma_A = (\left\{\left\{\epsilon_1\right\}\right\}, \left\{\left\{\epsilon_2\right\}\right\})$ and $\sigma_B = (\left\{\left\{\varepsilon_1\right\}\right\}, \left\{\left\{\varepsilon_2\right\}\right\})$. In this case, we have
$$
	m_{\kappa}(\tau) = 4, \quad [\frM_\tau^{\irr}] = 1, \quad [\cM_\tau] = q-1, \quad [\cG_\tau] = (q-1)^2.
$$
Therefore $[R(\tau)] = 4 (q-1) \frac{q(q+1)(q-1)^2}{(q-1)^2} = 4q^3-4q$.
\end{itemize}

Putting this all together, we get that $[R_{\kappa}^{\red}] = 4q^3+2q^2-2q$. The total count of representations is $[R_{\kappa}] = 
\left( \frac{q(q-1)(q-1)^2}{(q-1)^2}\right)^2 = q^{4} + 2 q^{3} + q^{2}$. Therefore, the irreducible representations are 
$[R_{\kappa}^{\irr}] = [R_{\kappa}] - [R_{\kappa}^{\red}] = q^{4} - 2 q^{3} - q^{2} + 2 q$. Moreover, we
get that $[\frM_{\kappa}^{\irr}] = [R_{\kappa}^{\irr}]/[\PGL_2] = q-2$.

Finally, by (\ref{eqn:MP-bien})
we know how many configurations of eigenvalues $\kappa$ exist. This gives the number of components of $\frM^{\irr}_2$ of the form $\frM^{\irr}_\kappa$, so following (\ref{eq:m-kappa}) we get that
$$
	[\frM^{\irr}_{2}] = \frac{1}{2} \binom{n-1}{2-1} \binom{m-1}{2-1} (q-2) = \frac{1}{2}(n-1)(m-1) 
	(q-2).
$$

\subsection{Rank $3$} \label{sec:rank3}

In this case, there are three different types of possible configurations of eigenvalues, according to their multiplicity. Observe that there cannot be a double eigenvalue both in the matrices $A$ and $B$ since, in that case, the representation is automatically reducible by Remark \ref{rmk:reducible}. In the following, $\epsilon_1, \epsilon_2, \epsilon_3$ (resp.\ $\varepsilon_1, \varepsilon_2, \varepsilon_3$) will denote three different eigenvalues.


\subsubsection{$\kappa_1 = ((\epsilon_1, \epsilon_2, \epsilon_3), (\varepsilon_1, \varepsilon_2, \varepsilon_3))$} In this case, the three eigenvalues are different. This implies that $\cG_\tau$ is a torus for any type $\tau \in \cT_{\kappa_1}$ of rank equal to
the number of irreducible pieces. In such manner, the contributions of the possible types are shown in the following table.

\begin{center}
\small
\begin{tabular}{|c|c|c|c|c|c|c|}
\hline
	$\tau$ & $\cM_\tau$ & $\cG_\tau$ & $\frM_\tau^{\irr}$ & $m_\kappa(\tau)$\\
\hline
	\begin{tabular}{c}
$\xi = (\left\{(1,1),(2,1)\right\})$\\
$\sigma_A = (\left\{\left\{\epsilon_1\right\},\left\{\epsilon_2,\epsilon_3\right\}\right\})$ \\
$\sigma_B = (\left\{\left\{\varepsilon_1\right\},\left\{\varepsilon_2,\varepsilon_3\right\}\right\})$
\end{tabular} & 1 & $(q-1)^2$ & $q-2$ & $9$\\
\hline
	\begin{tabular}{c}
$\xi = (\left\{(1,1)\right\},\left\{(2,1)\right\})$\\
$\sigma_A = (\left\{\left\{\epsilon_1\right\}\right\},\left\{\left\{\epsilon_2,\epsilon_3\right\}\right\})$ \\
$\sigma_B = (\left\{\left\{\varepsilon_1\right\}\right\},\left\{\left\{\varepsilon_2,\varepsilon_3\right\}\right\})$
\end{tabular} & $q^2-1$ & $(q-1)^2$ & $q-2$ & $9$\\
\hline
	\begin{tabular}{c}
$\xi = (\left\{(2,1)\right\},\left\{(1,1)\right\})$\\
$\sigma_A = (\left\{\left\{\epsilon_1,\epsilon_2\right\}\right\}, \left\{\left\{\epsilon_3\right\}\right\})$ \\
$\sigma_B = (\left\{\left\{\varepsilon_1,\varepsilon_2\right\}\right\},\left\{\left\{\varepsilon_3\right\}\right\})$
\end{tabular} & $q^2-1$ & $(q-1)^2$ & $q-2$ & $9$\\
\hline
	\begin{tabular}{c}
$\xi = (\left\{(1,1),(1,1),(1,1)\right\})$\\
$\sigma_A = (\left\{\left\{\epsilon_1\right\},\left\{\epsilon_2\right\},\left\{\epsilon_3\right\}\right\})$ \\
$\sigma_B = (\left\{\left\{\varepsilon_1\right\},\left\{\varepsilon_2\right\},\left\{\varepsilon_3\right\}\right\})$
\end{tabular} & 1 & $(q-1)^3$ & $1$ & $6$\\
\hline
	\begin{tabular}{c}
$\xi = (\left\{(1,1),(1,1)\right\},\left\{(1,1)\right\})$\\
$\sigma_A = (\left\{\left\{\epsilon_1\right\},\left\{\epsilon_2\right\}\right\},\left\{\left\{\epsilon_3\right\}\right\})$ \\
$\sigma_B = (\left\{\left\{\varepsilon_1\right\},\left\{\varepsilon_2\right\}\right\},\left\{\left\{\varepsilon_3\right\}\right\})$
\end{tabular} & $q^2-1$ & $(q-1)^3$ & $1$ & $18$\\
\hline
	\begin{tabular}{c}
$\xi = (\left\{(1,1)\right\},\left\{(1,1),(1,1)\right\})$\\
$\sigma_A = (\left\{\left\{\epsilon_1\right\}\right\},\left\{\left\{\epsilon_2\right\},\left\{\epsilon_3\right\}\right\})$ \\
$\sigma_B = (\left\{\left\{\varepsilon_1\right\}\right\},\left\{\left\{\varepsilon_2\right\},\left\{\varepsilon_3\right\}\right\})$
\end{tabular} & $(q-1)^2$ & $(q-1)^3$ & $1$ & $18$\\
\hline
	\begin{tabular}{c}
$\xi = (\left\{(1,1)\right\},\left\{(1,1)\right\},\left\{(1,1)\right\})$\\
$\sigma_A = (\left\{\left\{\epsilon_1\right\}\right\},\left\{\left\{\epsilon_2\right\}\right\},\left\{\left\{\epsilon_3\right\}\right\})$ \\
$\sigma_B = (\left\{\left\{\varepsilon_1\right\}\right\},\left\{\left\{\varepsilon_2\right\}\right\},\left\{\left\{\varepsilon_3\right\}\right\})$
\end{tabular} & $q(q-1)^2$ & $(q-1)^3$ & $1$ & $36$\\
\hline
\end{tabular}
\end{center}

Adding up all the contributions we get
$$
	[R_{\kappa_1}^{\red}] = 18  q^{10} + 18  q^{9} - 9  q^{8} - 9  q^{7} + 6 q^{6} + 21  q^{5} + 3  q^{4} - 12 q^{3}.
$$
Moreover, the total set of representations has motive
$$
	[R_{\kappa_1}] =q^{12} + 4 q^{11} + 8 q^{10} + 10 q^{9} + 8 q^{8} + 4  q^{7} + q^{6} ,
$$
and therefore, we get
 \begin{align*}
   [R_{\kappa_1}^{\irr}] &= q^{12} + 4 q^{11} -10 q^{10} -8 q^{9} + 17 q^{8} + 13  q^{7} -5 q^{6} 
  -21  q^{5} - 3  q^{4} + 12 q^{3}, \\
 [\frM_{\kappa_1}^{\irr}] &= [R_{\kappa_1}^{\irr}]/[\PGL_3] = q^{4} + 4 q^{3} - 9  q^{2} - 3 q + 12.
 \end{align*}
 
 \begin{remark}
 This corrects the formula in the published version of \cite[Thm.\ 8.3]{MP}. The current arXiv version contains the corrected formula,
 which matches this one.
 \end{remark}

\subsubsection{${\kappa_2} = ((\epsilon_1, \epsilon_2, \epsilon_3), (\varepsilon_1, \varepsilon_1, \varepsilon_2))$} \label{sec:rank3-repeatedB}
Again, in this case the matrix $A$ has no repeated eigenvalues, so the gauge group $\cG_\tau$ is again a torus for all $\tau \in \cT_{\kappa_2}$. Nevertheless, now $\cM_\tau$ may vary since we need to take into account the repeated eigenvalues of the matrix $B$.

\begin{center}
\small
\begin{tabular}{|c|c|c|c|c|c|c|}
\hline
	$\tau$ & $\cM_\tau$ & $\cG_\tau$ & $\frM_\tau^{\irr}$ & $m_\kappa(\tau)$\\
\hline
	\begin{tabular}{c}
$\xi = (\left\{(1,1),(2,1)\right\})$\\
$\sigma_A = (\left\{\left\{\epsilon_1\right\},\left\{\epsilon_2,\epsilon_3\right\}\right\})$ \\
$\sigma_B = (\left\{\left\{\varepsilon_1\right\},\left\{\varepsilon_1,\varepsilon_2\right\}\right\})$
\end{tabular} & 1 & $(q-1)^2$ & $q-2$ & $3$\\
\hline
	\begin{tabular}{c}
$\xi = (\left\{(1,1)\right\},\left\{(2,1)\right\})$\\
$\sigma_A = (\left\{\left\{\epsilon_1\right\}\right\},\left\{\left\{\epsilon_2,\epsilon_3\right\}\right\})$ \\
$\sigma_B = (\left\{\left\{\varepsilon_1\right\}\right\},\left\{\left\{\varepsilon_1,\varepsilon_2\right\}\right\})$
\end{tabular} & $q-1$ & $(q-1)^2$ & $q-2$ & $3$\\
\hline
	\begin{tabular}{c}
$\xi = (\left\{(2,1)\right\},\left\{(1,1)\right\})$\\
$\sigma_A = (\left\{\left\{\epsilon_1,\epsilon_2\right\}\right\}, \left\{\left\{\epsilon_3\right\}\right\})$ \\
$\sigma_B = (\left\{\left\{\varepsilon_1,\varepsilon_2\right\}\right\},\left\{\left\{\varepsilon_1\right\}\right\})$
\end{tabular} & $q-1$ & $(q-1)^2$ & $q-2$ & $3$\\
\hline
	\begin{tabular}{c}
$\xi = (\left\{(1,1),(1,1),(1,1)\right\})$\\
$\sigma_A = (\left\{\left\{\epsilon_1\right\},\left\{\epsilon_2\right\},\left\{\epsilon_3\right\}\right\})$ \\
$\sigma_B = (\left\{\left\{\varepsilon_1\right\},\left\{\varepsilon_1\right\},\left\{\varepsilon_2\right\}\right\})$
\end{tabular} & 1 & $(q-1)^3$ & $1$ & $3$\\
\hline
	\begin{tabular}{c}
$\xi = (\left\{(1,1),(1,1)\right\},\left\{(1,1)\right\})$\\
$\sigma_A = (\left\{\left\{\epsilon_1\right\},\left\{\epsilon_2\right\}\right\},\left\{\left\{\epsilon_3\right\}\right\})$ \\
$\sigma_B = (\left\{\left\{\varepsilon_1\right\},\left\{\varepsilon_1\right\}\right\},\left\{\left\{\varepsilon_2\right\}\right\})$
\end{tabular} & $q^2-1$ & $(q-1)^3$ & $1$ & $3$\\
\hline
	\begin{tabular}{c}
$\xi = (\left\{(1,1),(1,1)\right\},\left\{(1,1)\right\})$\\
$\sigma_A = (\left\{\left\{\epsilon_1\right\},\left\{\epsilon_2\right\}\right\},\left\{\left\{\epsilon_3\right\}\right\})$ \\
$\sigma_B = (\left\{\left\{\varepsilon_1\right\},\left\{\varepsilon_2\right\}\right\},\left\{\left\{\varepsilon_1\right\}\right\})$
\end{tabular} & $q-1$ & $(q-1)^3$ & $1$ & $6$\\
\hline
	\begin{tabular}{c}
$\xi = (\left\{(1,1)\right\},\left\{(1,1),(1,1)\right\})$\\
$\sigma_A = (\left\{\left\{\epsilon_1\right\}\right\},\left\{\left\{\epsilon_2\right\},\left\{\epsilon_3\right\}\right\})$ \\
$\sigma_B = (\left\{\left\{\varepsilon_2\right\}\right\},\left\{\left\{\varepsilon_1\right\},\left\{\varepsilon_1\right\}\right\})$
\end{tabular} & $(q-1)^2$ & $(q-1)^3$ & $1$ & $3$\\
\hline
	\begin{tabular}{c}
$\xi = (\left\{(1,1)\right\},\left\{(1,1)\right\},\left\{(1,1)\right\})$\\
$\sigma_A = (\left\{\left\{\epsilon_1\right\}\right\},\left\{\left\{\epsilon_2\right\}\right\},\left\{\left\{\epsilon_3\right\}\right\})$ \\
$\sigma_B = (\left\{\left\{\varepsilon_1\right\}\right\},\left\{\left\{\varepsilon_2\right\}\right\},\left\{\left\{\varepsilon_1\right\}\right\})$
\end{tabular} & $(q-1)^2$ & $(q-1)^3$ & $1$ & $6$\\
\hline
\end{tabular}
\end{center}

Adding up all the contributions we get
$$
	[R_{\kappa_2}^{\red}] = 6 q^{9} + 3  q^{8} + 3  q^{7} + 3  q^{6} + 3 q^{5} + 3 q^{4} - 3 q^{3}.
$$
Moreover, the total set of representations has motive
$$
	[R_{\kappa_2}] =q^{10} + 3 q^{9} + 5 q^{8} + 5 q^{7} + 3 q^{6} + q^{5},
$$
and, therefore, we get
 \begin{align*}
   [R_{\kappa_2}^{\irr}] &= q^{10} - 3 q^{9} + 2 q^{8} + 2 q^{7} -2 q^{5}-3q^4+3q^3, \\
 [\frM_{\kappa_2}^{\irr}] &= [R_{\kappa_2}^{\irr}]/[\PGL_3] = q^{2} - 3 q + 3.
 \end{align*}

\subsubsection{${\kappa_3} = ((\epsilon_1, \epsilon_1, \epsilon_2), (\varepsilon_1, \varepsilon_2, \varepsilon_3))$} In this case, there are repeated eigenvalues in $A$. This implies that now the gauge group will contain lines accounting for the repeated eigenvalues in $A$.

\begin{center}
\small
\begin{tabular}{|c|c|c|c|c|c|c|}
\hline
	$\tau$ & $\cM_\tau$ & $\cG_\tau$ & $\frM_\tau^{\irr}$ & $m_\kappa(\tau)$\\
\hline
	\begin{tabular}{c}
$\xi = (\left\{(1,1),(2,1)\right\})$\\
$\sigma_A = (\left\{\left\{\epsilon_1\right\},\left\{\epsilon_1,\epsilon_2\right\}\right\})$ \\
$\sigma_B = (\left\{\left\{\varepsilon_1\right\},\left\{\varepsilon_2,\varepsilon_3\right\}\right\})$
\end{tabular} & 1 & $(q-1)^2$ & $q-2$ & $3$\\
\hline
	\begin{tabular}{c}
$\xi = (\left\{(1,1)\right\},\left\{(2,1)\right\})$\\
$\sigma_A = (\left\{\left\{\epsilon_1\right\}\right\},\left\{\left\{\epsilon_1,\epsilon_2\right\}\right\})$ \\
$\sigma_B = (\left\{\left\{\varepsilon_1\right\}\right\},\left\{\left\{\varepsilon_2,\varepsilon_3\right\}\right\})$
\end{tabular} & $q^2-q$ & $q(q-1)^2$ & $q-2$ & $3$\\
\hline
	\begin{tabular}{c}
$\xi = (\left\{(2,1)\right\},\left\{(1,1)\right\})$\\
$\sigma_A = (\left\{\left\{\epsilon_1,\epsilon_2\right\}\right\}, \left\{\left\{\epsilon_1\right\}\right\})$ \\
$\sigma_B = (\left\{\left\{\varepsilon_1,\varepsilon_2\right\}\right\},\left\{\left\{\varepsilon_3\right\}\right\})$
\end{tabular} & $q^2-q$ & $q(q-1)^2$ & $q-2$ & $3$\\
\hline
	\begin{tabular}{c}
$\xi = (\left\{(1,1),(1,1),(1,1)\right\})$\\
$\sigma_A = (\left\{\left\{\epsilon_1\right\},\left\{\epsilon_1\right\},\left\{\epsilon_2\right\}\right\})$ \\
$\sigma_B = (\left\{\left\{\varepsilon_1\right\},\left\{\varepsilon_2\right\},\left\{\varepsilon_3\right\}\right\})$
\end{tabular} & 1 & $(q-1)^3$ & $1$ & $3$\\
\hline
	\begin{tabular}{c}
$\xi = (\left\{(1,1),(1,1)\right\},\left\{(1,1)\right\})$\\
$\sigma_A = (\left\{\left\{\epsilon_1\right\},\left\{\epsilon_1\right\}\right\},\left\{\left\{\epsilon_2\right\}\right\})$ \\
$\sigma_B = (\left\{\left\{\varepsilon_1\right\},\left\{\varepsilon_2\right\}\right\},\left\{\left\{\varepsilon_3\right\}\right\})$
\end{tabular} & $q^2-1$ & $(q-1)^3$ & $1$ & $3$\\
\hline
	\begin{tabular}{c}
$\xi = (\left\{(1,1),(1,1)\right\},\left\{(1,1)\right\})$\\
$\sigma_A = (\left\{\left\{\epsilon_1\right\},\left\{\epsilon_2\right\}\right\},\left\{\left\{\epsilon_1\right\}\right\})$ \\
$\sigma_B = (\left\{\left\{\varepsilon_1\right\},\left\{\varepsilon_2\right\}\right\},\left\{\left\{\varepsilon_3\right\}\right\})$
\end{tabular} & $q^2-q$ & $q(q-1)^3$ & $1$ & $6$\\
\hline
	\begin{tabular}{c}
$\xi = (\left\{(1,1)\right\},\left\{(1,1),(1,1)\right\})$\\
$\sigma_A = (\left\{\left\{\epsilon_2\right\}\right\},\left\{\left\{\epsilon_1\right\},\left\{\epsilon_1\right\}\right\})$ \\
$\sigma_B = (\left\{\left\{\varepsilon_1\right\}\right\},\left\{\left\{\varepsilon_2\right\},\left\{\varepsilon_3\right\}\right\})$
\end{tabular} & $(q-1)^2$ & $(q-1)^3$ & $1$ & $3$\\
\hline
	\begin{tabular}{c}
$\xi = (\left\{(1,1)\right\},\left\{(1,1)\right\},\left\{(1,1)\right\})$\\
$\sigma_A = (\left\{\left\{\epsilon_1\right\}\right\},\left\{\left\{\epsilon_2\right\}\right\},\left\{\left\{\epsilon_1\right\}\right\})$ \\
$\sigma_B = (\left\{\left\{\varepsilon_1\right\}\right\},\left\{\left\{\varepsilon_2\right\}\right\},\left\{\left\{\varepsilon_3\right\}\right\})$
\end{tabular} & $q(q-1)^2$ & $q(q-1)^3$ & $1$ & $6$\\
\hline
\end{tabular}
\end{center}

Adding up all the contributions we get
$$
	[R_{\kappa_3}^{\red}] = 6 q^{9} + 3  q^{8} + 3  q^{7} + 3  q^{6} + 3 q^{5} + 3 q^{4} - 3 q^{3}.
$$
Moreover, the total set of representations has motive
$$
	[R_{\kappa_3}] = q^{10} + 3 q^{9} + 5 q^{8} + 5 q^{7} + 3 q^{6} + q^{5},
$$
and, therefore, we get
 \begin{align*}
   [R_{\kappa_3}^{\irr}] &= q^{10} - 3 q^{9} + 2 q^{8} + 2 q^{7} -2 q^{5}-3q^4+3q^3, \\
 [\frM_{\kappa_3}^{\irr}] &= [R_{\kappa_3}^{\irr}]/[\PGL_3] = q^{2} - 3 q + 3.
 \end{align*}
Observe that this result agrees with the one of Section \ref{sec:rank3-repeatedB}. This is not a coincidence, since the role of $A$ and $B$ are interchangeable ($n$ and $m$ play no role in the computation) so both cases are computing the same component.

In order to put all the computations together, let $\pi_0 = \left\{1^3\right\}$ be the partition into three different eigenvalues and $\pi_1 = \left\{1^1, 2^1\right\}$ be the partition with two equal eigenvalues. Then, we have that
\begin{align*}
	[\frM^{\irr}_3] &= |M_{n,m,3}^{\pi_0, \pi_0}| [\frM_{\kappa_1}^{\irr}] + |M_{n,m,3}^{\pi_0, \pi_1}| [\frM_{\kappa_2}^{\irr}] + |M_{n,m,3}^{\pi_1, \pi_0}| [\frM_{\kappa_3}^{\irr}] \\
	 &= \frac{3}{nm} \begin{pmatrix}
	n\\
	3 \end{pmatrix} \begin{pmatrix}
	m\\
	3\end{pmatrix} (q^{4} + 4 q^{3} - 9  q^{2} - 3 q + 12) \\
	&\qquad + \frac{3}{nm}\left( \begin{pmatrix}
	n\\
	3 \end{pmatrix}\begin{pmatrix}
	m\\
	1,1\end{pmatrix} + \begin{pmatrix}
	n\\
	1,1 \end{pmatrix}\begin{pmatrix}
	m\\
	3\end{pmatrix}\right) (q^{2} - 3 q + 3) \\
	&=  \frac{(n-1)(n-2)(m-1)(m-2)}{12} (q^{4} + 4 q^{3} - 9  q^{2} - 3 q + 12) \\
	& \qquad + \frac{(n-1)(m-1)(n+m-4)}{2} (q^{2} - 3 q + 3),
\end{align*}
using the formula for the coefficients $|M_{n,m,3}^{\pi_1, \pi_0}|,|M_{n,m,3}^{\pi_1, \pi_0}|$ and $|M_{n,m,3}^{\pi_0, \pi_1}|$ 
in Section \ref{sec:number-components}. 

\section{Irreducible character variety for $\SL_4$} \label{sec:rank4}

In this section, we compute the motive of the irreducible $\SL_4$-character variety. As in Section \ref{sec:rank-low}, with the methods 
developed in this paper 
the computation reduces to an enumerative problem. First, we need to consider the possible eigenvalues configurations that may contain irreducible representations, in the sense that they do not fulfill the conditions of Remark \ref{rmk:reducible}. As we will see, there are $11$ possible configurations.

Among these, there is a special configuration, namely $\kappa = ((\epsilon_1,\epsilon_1, \epsilon_2, \epsilon_2), (\varepsilon_1,\varepsilon_1, \varepsilon_2, \varepsilon_2))$. This configuration is the only one that contains types $\tau \in \cT_\kappa$ with isotypic components of dimension greater than $1$ and with multiplicity greater than $1$, namely, the type $\tau=(\xi,\sigma)$ 
with shape $\xi = (\left\{(2, 2)\right\})$ and eigenvalues $\sigma_A = (\left\{\left\{\epsilon_1, \epsilon_2\right\}\right\})$ and $\sigma_B = (\left\{\left\{\varepsilon_1, \varepsilon_2\right\}\right\})$. Hence, this is the only configuration for which formula (\ref{eqn:working-eqn3}) is not valid because there is an action of $\ZZ_2$ on $\frM_\tau^{\irr}$. Fortunately, we do not need to count this stratum thanks to the following result.

\begin{proposition}\label{prop:8.1}
Let $\kappa = ((\epsilon_1,\epsilon_1, \epsilon_2, \epsilon_2), (\varepsilon_1,\varepsilon_1, \varepsilon_2, \varepsilon_2))$. Then $R_\kappa^{\irr} = \emptyset$, that is, all the representations of $R_\kappa$ are reducible.
\end{proposition}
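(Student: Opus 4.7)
The plan is to pick a basis adapted to $A$, analyze the block structure that $B$ inherits from its minimal polynomial, and produce a common invariant subspace from the resulting identities. By conjugating with an element of $\GL_4(k)$, we may assume $A = \diag(\epsilon_1 I_2,\epsilon_2 I_2)$, so that the $A$-eigenspaces are $V_1 = \langle e_1,e_2\rangle$ and $V_2 = \langle e_3,e_4\rangle$. Writing $B = \left(\begin{smallmatrix} P & Q \\ R & S\end{smallmatrix}\right)$ in the corresponding block form and setting $\sigma = \varepsilon_1 + \varepsilon_2$, $\pi = \varepsilon_1\varepsilon_2$, the relation $B^2 - \sigma B + \pi\,\Id = 0$ (which holds since $B$ is diagonalizable with spectrum $\{\varepsilon_1,\varepsilon_2\}$) unfolds block-by-block into
\[
PQ + QS = \sigma Q, \quad RP + SR = \sigma R, \quad QR = -(P-\varepsilon_1)(P-\varepsilon_2), \quad RQ = -(S-\varepsilon_1)(S-\varepsilon_2).
\]

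With these identities in hand, the argument splits according to whether $Q$ and $R$ are invertible. If $Q$ is not invertible, evaluating the first identity on $\ker Q$ gives $Q(Sv) = \sigma Qv - P(Qv) = 0$, so $\ker Q \subset V_2$ is $S$-invariant. Any eigenvector $v$ of $S|_{\ker Q}$ then spans a line which is $A$-invariant (it lies in $V_2$) and $B$-invariant (since $Bv = (Qv,Sv) = (0,Sv) \in \langle v\rangle$), yielding a proper common invariant subspace. A symmetric argument, using that $\ker R \subset V_1$ is $P$-invariant by the second identity, handles the case where $R$ is not invertible.

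The main case, and the conceptual heart of the proof, is when both $Q$ and $R$ are invertible. I would pick any $P$-invariant line $L_1 \subset V_1$ (one exists since $P \in M_2(k)$ has an eigenvector over the algebraically closed field $k$) and set $L_2 := R(L_1) \subset V_2$. The claim is that $L_1 \oplus L_2$ is $B$-invariant, and the key observation is that the block identities reduce this to the single requirement that $L_1$ be $P$-invariant: from $SR = R(\sigma\,\Id - P)$ one gets $SL_2 = R(\sigma\,\Id - P)(L_1) \subset R(L_1) = L_2$, and since $QR = -(P-\varepsilon_1)(P-\varepsilon_2)$ is a polynomial in $P$ one gets $QL_2 = (QR)(L_1) \subset L_1$. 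Combined with $PL_1 \subset L_1$ and $RL_1 = L_2$, this shows $B(L_1\oplus L_2) \subset L_1\oplus L_2$. Since $L_1 \oplus L_2 \subsetneq V$ is clearly $A$-invariant, this gives the desired proper common invariant subspace, proving $R_\kappa^{\irr} = \emptyset$.

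The step I expect to require genuine insight is the invertible case: recognizing that the seemingly over-determined system of invariance conditions on $L_1$ and $L_2 = R(L_1)$ collapses to ``$L_1$ is $P$-invariant,'' thanks to the fact that $QR$ is a polynomial in $P$. The block expansion of $B^2 - \sigma B + \pi\,\Id = 0$ is routine, and the non-invertible cases follow quickly once the right invariant subspace ($\ker Q$ or $\ker R$) is identified.
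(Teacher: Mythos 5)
Your proof is correct, and it takes a genuinely different route from the paper's. The paper works symmetrically with all four eigenspaces $V_1, V_2$ of $A$ and $W_1, W_2$ of $B$: it seeks $v_1 \in V_1$, $v_2 \in V_2$ and a scalar $\mu$ such that $v_1 + v_2 \in W_1$ and $v_1 + \mu v_2 \in W_2$, so that $\langle v_1, v_2\rangle$ is the invariant plane; after reducing to the case where the relevant restriction maps $L_{i,j}$ are invertible, the existence of such a $\mu$ becomes a quadratic determinant equation, solvable over the algebraically closed field $k$. You instead adapt the basis only to $A$, replace the explicit appeal to the $B$-eigenspaces by the minimal-polynomial identity $B^2 - \sigma B + \pi\,\Id = 0$, and read off from its block expansion exactly the identities the construction needs. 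What your route buys is a more structural and explicit construction: the degenerate cases ($Q$ or $R$ singular) immediately produce a common eigenvector via an $S$- or $P$-invariant subspace of $\ker Q$ or $\ker R$, and in the generic case the invariant plane is the explicit $L_1 \oplus R(L_1)$ with $L_1$ any $P$-eigenline, with all invariance conditions collapsing to ``$L_1$ is $P$-invariant'' because $QR$ is a polynomial in $P$ and $SR = R(\sigma\,\Id - P)$. The only use of algebraic closure is the existence of a $P$-eigenvector, rather than solving the quadratic in $\mu$. The paper's proof is shorter and treats $A$ and $B$ on equal footing; yours makes visible a structural reason for reducibility that the paper's determinant computation hides.
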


\begin{proof}
Let $\rho = (A, B) \in R_\kappa$ and let $V_i, W_i \subseteq k^4$ be the eigenspaces of $A$ and $B$ of eigenvalues $\epsilon_i$ and $\varepsilon_i$ respectively, for $i = 1,2$. We claim that there exists $v_1 \in V_1$, $v_2 \in V_2$ and $\mu \in k$ such that $v_1 + v_2 \in W_1$ and $v_1 + \mu v_2 \in W_2$. In that case, the subspace $H = \langle v_1, v_2\rangle = \langle v_1 + v_2, v_1 + \mu v_2\rangle$ is an invariant subrepresentation, proving that $\rho$ is reducible. Observe that we must have $\mu \neq 1$ since otherwise $v_1 + v_2 \in W_1 \cap W_2$.

In order to obtain these vectors, let $L_i = L_{i,1} + L_{i,2}: V_1 \oplus V_2 \to k^2$ be a linear map with $W_i = \ker L_i$ for $i = 1,2$. Observe that we can suppose that the maps $L_{i,j}$ are invertible since otherwise $W_i \cap V_j \neq 0$. The vectors we are looking for must satisfy 
$L_{1,1} (v_1) + L_{1,2} (v_2) = 0$ and $L_{2,1}( v_1) + \mu L_{2,2} (v_2) = 0$. Form the matrix
$$
	L = \begin{pmatrix}
		L_{1,1} & L_{1,2} \\
		L_{2,1} & \mu L_{2,2}
	\end{pmatrix}.
$$
The equation $\det L = 0$ gives a quadratic equation for $\mu$. Choosing one of its solutions, 
any $v_1 \oplus v_2 \in \ker L$ provides the desired vectors.
\end{proof}

With this result at hand, it is enough to consider the remaining $10$ possible configurations of eigenvalues. As we will see, depending on whether they admit types with isotypic components of higher multiplicity or not, the counting method is slightly different, so we will analyze them in separate sections.

Observe that, as suggested for the computations of the cases of rank $2$ and rank $3$, the number of types to be analyzed grows exponentially with the rank. In the rank $2$ case, we needed to consider $2$ classes of types, whereas in the rank $3$ case we had to deal with $23$ different classes of types. In the rank $4$ case, there are more than $350$ types to be analyzed, so the computation must be performed with the aid of a computer algebra system. In our case, we use SageMath and the complete script performing the calculating can be found in \cite{GPMScript}. For this reason, in order to keep this paper at a reasonable size, we will only report here some particular cases in order to illustrate the method. For a complete description of the possible types and their count, please refer to the script \cite{GPMScript} and the output therein.

\subsection{Configurations with all the isotypic components of multiplicity $1$} \label{subsec:8.1}

This case is completely analogous to the rank $3$ case, as described in Section \ref{sec:rank3}. Since there are no isotypic components of higher multiplicity, the motive of the variety $\cM_\tau$ and of the gauge group $\cG_\tau$ can be directly computed using (\ref{eq:mult-1}) and (\ref{eq:gauge-group}), which are purely combinatorial formulas depending on the coincidence of eigenvalues on the different blocks of $A$ and $B$.

There are seven configurations of eigenvalues with this property, whose count is as follows. Here eigenvalues with different subindices are distinct numbers.

\begin{itemize}
	\item $\kappa = ((\epsilon_1,\epsilon_1,\epsilon_1,\epsilon_2), (\varepsilon_1,\varepsilon_2,\varepsilon_3,\varepsilon_4))$,
\begin{align*}
	[R_\kappa^{\irr}] = &\, q^{18} - 4 \, q^{17} + 5 \, q^{16} - q^{15} - 3 \, q^{14} + 3 \, q^{13} \\
	&- 5 \, q^{12} + 7 \, q^{11} - 2 \, q^{10} + q^{9} - 6 \, q^{7} + 4 \, q^{6}.
\end{align*}

	\item $\kappa = ((\epsilon_1,\epsilon_1,\epsilon_2,\epsilon_2),(\varepsilon_1,\varepsilon_2,\varepsilon_3,\varepsilon_4))$,
\begin{align*}
	 [R_\kappa^{\irr}] = &\, q^{20} + 4 \, q^{19} - 12 \, q^{18} - 4 \, q^{17} + 24 \, q^{16} - 11 \, q^{15} - 3 \, q^{14} - 7 \, q^{13} \\
	 &- 6 \, q^{12} + 25 \, q^{11} - 3 \, q^{10} + 11 \, q^{9} - 19 \, q^{8} - 18 \, q^{7} + 18 \, q^{6}. 
\end{align*}

	\item $\kappa = ((\epsilon_1,\epsilon_1,\epsilon_2,\epsilon_3),(\varepsilon_1,\varepsilon_2,\varepsilon_3,\varepsilon_4))$,
\begin{align*}
	 [R_\kappa^{\irr}] = &\, q^{22} + 5 \, q^{21} + 6 \, q^{20} - 40 \, q^{19} - 13 \, q^{18} + 57 \, q^{17} + 51 \, q^{16} - 35 \, q^{15} - 74 \, q^{14} \\
	 &- 32 \, q^{13} + 25 \, q^{12} + 93 \, q^{11} + 38 \, q^{10} - 30 \, q^{9} - 82 \, q^{8} - 18 \, q^{7} + 48 \, q^{6}. 
\end{align*}

	\item $\kappa = ((\epsilon_1,\epsilon_2,\epsilon_3,\epsilon_4),(\varepsilon_1,\varepsilon_1,\varepsilon_1,\varepsilon_2))$,
\begin{align*}
	 [R_\kappa^{\irr}] = &\, q^{18} - 4 \, q^{17} + 5 \, q^{16} - q^{15} - 3 \, q^{14}\\
	 &+ 3 \, q^{13} - 5 \, q^{12} + 7 \, q^{11} - 2 \, q^{10} + q^{9} - 6 \, q^{7} + 4 \, q^{6}. 
\end{align*}

	\item $\kappa = ((\epsilon_1,\epsilon_2,\epsilon_3,\epsilon_4),(\varepsilon_1,\varepsilon_1,\varepsilon_2,\varepsilon_2))$,
\begin{align*}
	 [R_\kappa^{\irr}] = &\, q^{20} + 4 \, q^{19} - 12 \, q^{18} - 4 \, q^{17} + 24 \, q^{16} - 11 \, q^{15} - 3 \, q^{14} - 7 \, q^{13} \\
	 &- 6 \, q^{12} + 25 \, q^{11} - 3 \, q^{10} + 11 \, q^{9} - 19 \, q^{8} - 18 \, q^{7} + 18 \, q^{6}. 
\end{align*}

	\item $\kappa = ((\epsilon_1,\epsilon_2,\epsilon_3,\epsilon_4),(\varepsilon_1,\varepsilon_1,\varepsilon_2,\varepsilon_3))$,
\begin{align*}
	 [R_\kappa^{\irr}] = &\, q^{22} + 5 \, q^{21} + 6 \, q^{20} - 40 \, q^{19} - 13 \, q^{18} + 57 \, q^{17} + 51 \, q^{16} - 35 \, q^{15} - 74 \, q^{14}\\
	 & - 32 \, q^{13} + 25 \, q^{12} + 93 \, q^{11} + 38 \, q^{10} - 30 \, q^{9} - 82 \, q^{8} - 18 \, q^{7} + 48 \, q^{6}. 
\end{align*}

	\item $\kappa = ((\epsilon_1,\epsilon_2,\epsilon_3,\epsilon_4),(\varepsilon_1,\varepsilon_2,\varepsilon_3,\varepsilon_4))$,
\begin{align*}
	 [R_\kappa^{\irr}] = &\, q^{24} + 6 \, q^{23} + 19 \, q^{22} + 10 \, q^{21} - 125 \, q^{20} - 68 \, q^{19} + 106 \, q^{18} \\& + 260 \, q^{17} + 129 \, q^{16} - 344 \, q^{15} - 277 \, q^{14} - 88 \, q^{13} + 265 \, q^{12} \\
	 &+ 406 \, q^{11} + 8 \, q^{10} - 182 \, q^{9} - 270 \, q^{8} + 144 \, q^{6}. 
\end{align*}
\end{itemize}

\subsection{Configurations with isotypic components of higher multiplicity} \label{subsec:8.2}

In this case, the configurations of eigenvalues admit types with isotypic components $W_{i,j}$ of multiplicity $m_{i,j} > 1$. Observe that, since the configuration $\kappa = ((\epsilon_1,\epsilon_1, \epsilon_2, \epsilon_2), (\varepsilon_1,\varepsilon_1, \varepsilon_2, \varepsilon_2))$ has been excluded, the only possibility that may occur is $\dim W_{i,j} = 1$. Moreover, if $m_{i,j} > 2$ then there would exist a common eigenvector to $A$ and $B$ (c.f.\ Remark \ref{rmk:reducible}).

Therefore, these higher multiplicity blocks must have $\dim W_{i,j} = 1$ and $m_{i,j} = 2$. In this way, the motive of the associated variety $[\cM_\tau]$ can be computed using items (\ref{item:mult-1}) and (\ref{item:mult-2}) of Section \ref{sec:explicit-formulas}. As always, the calculation of the motive of the gauge group, $[\cG_\tau]$, as well as the multiplicities $m_\kappa(\tau)$ are a purely combinatorial matter. Finally, the irreducible part $[\frM_\tau^{\irr}]$ can be obtained from the results of Section \ref{sec:rank-low}.

Therefore, we get three configuration of eigenvalues, whose contributions are as follows:

\begin{itemize}
	\item $\kappa = ((\epsilon_1,\epsilon_1,\epsilon_2,\epsilon_2),(\varepsilon_1,\varepsilon_1,\varepsilon_2,\varepsilon_3))$,
\begin{align*}
	 [R_\kappa^{\irr}] = &\, q^{18} - 3 \, q^{17} + 4 \, q^{16} - 2 \, q^{15} - 3 \, q^{14} + 3 \, q^{13} \\
	 &- 3 \, q^{12} + 7 \, q^{11} - 2 \, q^{10} - q^{8} - 5 \, q^{7} + 4 \, q^{6}. 
\end{align*}
	\item $\kappa = ((\epsilon_1,\epsilon_1,\epsilon_2,\epsilon_3)$ $(\varepsilon_1,\varepsilon_1,\varepsilon_2,\varepsilon_2))$,
\begin{align*}
	 [R_\kappa^{\irr}] = &\,q^{18} - 3 \, q^{17} + 4 \, q^{16} - 2 \, q^{15} - 3 \, q^{14} + 3 \, q^{13} \\
	 &- 3 \, q^{12} + 7 \, q^{11} - 2 \, q^{10} - q^{8} - 5 \, q^{7} + 4 \, q^{6}. 
\end{align*}
	\item $\kappa = ((\epsilon_1,\epsilon_1,\epsilon_2,\epsilon_3),(\varepsilon_1,\varepsilon_1,\varepsilon_2,\varepsilon_3))$,
	\begin{align*}
	 [R_\kappa^{\irr}] = &\,q^{20} + 2 \, q^{19} - 11 \, q^{18} + 4 \, q^{17} + 18 \, q^{16} - 15 \, q^{15} - 5 \, q^{14} - 8 \, q^{13} \\
	 &+ 5 \, q^{12} + 24 \, q^{11} - q^{10} + 4 \, q^{9} - 24 \, q^{8} - 11 \, q^{7} + 17 \, q^{6}.
\end{align*}
\end{itemize}

With these results, we can finally compute the character variety of irreducible components of torus knots in $\SL_4$. Consider the partitions $\pi_0 = \left\{1^4\right\}$, $\pi_1 = \left\{1^2, 2^1\right\}$, $\pi_2 = \left\{2^2\right\}$ and $\pi_3 = \left\{1^1, 3^1\right\}$ and, given two of such partitions, shorten $C_{\pi, \pi'} = |M_{n,m,4}^{\pi, \pi'}| + |M_{n,m,4}^{\pi', \pi}|$ if $\pi \neq \pi'$ and $C_{\pi, \pi} = |M_{n,m,4}^{\pi, \pi}|$. With this notation, the final result is
\begin{equation}\label{eqn:frM4irr}
\begin{aligned}
[\frM_4^{\irr}] =&\, C_{\pi_0, \pi_0} {\left(q^{9} + 6  q^{8} + 20  q^{7} + 17  q^{6} - 98  q^{5} - 26  q^{4} + 38  q^{3} + 126  q^{2} - 144\right)}  \\
&+ C_{\pi_0, \pi_1}{\left(q^{7} + 5  q^{6} + 7  q^{5} - 34  q^{4} + 34  q^{2} + 18  q - 48\right)}  \\
& +C_{\pi_0, \pi_2}{\left(q^{5} + 4  q^{4} - 11  q^{3} + q^{2} + 18  q - 18\right)}  + C_{\pi_0, \pi_3}{\left(q^{3} - 4  q^{2} + 6  q - 4\right)} \\
& + C_{\pi_1, \pi_2}{\left(q^{3} - 3  q^{2} + 5  q - 4\right)}  + C_{\pi_1, \pi_1}{\left(q^{5} + 2  q^{4} - 10  q^{3} + 7  q^{2} + 11  q - 17\right)} .
\end{aligned}
\end{equation}

The formulas for $C_{\pi_1,\pi_2}$ appear in Theorem \ref{thm:coprime-components} (and the comment following it for $r=4$).
This completes the proof of Theorem \ref{thm:main}.

\section{Motive of the character varieties}\label{sec:motive-total}

To finish this work, let us compute the motive of the character varieties 
 $$
 \frX_r=\frX(\G_{n,m},\SL_r),
 $$
or equivalently, of the moduli spaces of representations $\frM_r = \frM(\G_{n,m},\SL_r)$, for the torus knots $\G_{n,,m}$ and $r=4$.
We recall that, for lower rank, the motive of $[\frX_2]$ appears in 
\cite[Prop.\ 7.1]{MP} and the motive of $[\frX_3]$ is computed in \cite[Thm.\ 8.3]{MP}.

Recall that the space $\frX_r$ parametrizes (isomorphism classes of) semi-simple representations. 
Let $\Pi_r$ be the set of all partitions of $r$ and let us denote by 
$\pi=\{r_1^{a_1},\ldots, r_s^{a_s}\}$ a partition of $r$. Then there is a decomposition of the character variety 
 $$
 \frX_r=\bigsqcup_{\pi\in \Pi_r} \frX_r^{\pi}\, , 
 $$
where 
  \begin{align*} \frX_r^{\pi} &=\Big\{ \rho=\mathop{\oplus}\limits_{t=1}^s \left[\mathop{\oplus}\limits_{l=1}^{a_t}  \rho_{t,l}\right]\,\left|\,  
 \rho_{t,l} \in \tilde\frX^{\irr}_{r_t}, \det \rho=1\right.\Big\}  
 \subset \prod_{t=1}^s \Sym^{a_t} 
 \big(\tilde\frX^{\irr}_{r_t}\big).
  \end{align*}
Here, we set $\tilde\frX_r^{\irr}=\frX^{\irr}(\G_{n,m},\GL_r)$ and $\bar\frX_r^{\irr}=\frX^{\irr}(\G_{n,m},\PGL_r)$, as in \cite{MP}. We also denote by $\left[\mathop{\oplus}\limits_{l=1}^{a_t}  \rho_{t,l}\right]$ the class of the representation in the symmetric product $\Sym^{a_t} 
 \big(\tilde\frX^{\irr}_{r_t}\big)$. Note that $\frX_r^{\{r\}}=\frX_r^{\irr}$ is the set of irreducible representations, and 
$\frX_r^{\{1^r\}}$ is the set of completely reducible representations, that is, those which are direct sum of one-dimensional representations.

In particular, for our rank $4$ case, we have
 \begin{equation}\label{eqn:78}
  [\frX_4]= [\frX_4^{irr}]+ \big[\frX_4^{\{3,1\}}\big]+  \big[\frX_4^{\{2^2\}}\big]+  \big[\frX_4^{\{2,1^2\}}\big]+\big[\frX_4^{\{1^4\}}\big]. 
  \end{equation}

\medskip

In order to study each of the previous strata, we consider the auxiliary varieties
  \begin{align*}
  \tilde\frX_r^{\pi} &=\big\{ \rho=\mathop{\oplus}\limits_{t=1}^s \left[\mathop{\oplus}\limits_{l=1}^{a_t}  \rho_{t,l} \right] \, \left| \,  
 \rho_{t,l} \in \tilde\frX^{\irr}_{r_t} \right.\big\}  
 = \prod_{t=1}^s \Sym^{a_t} 
 \big(\tilde\frX^{\irr}_{r_t}\big) \\
 \Sym^{a}_0\big(\tilde\frX^{\irr}_{r}\big) & = \big\{ \rho=\left[\mathop{\oplus}\limits_{l=1}^{a}  \rho_{l} \right] \,\left|\,  \rho_{l} \in \tilde\frX^{\irr}_{r_t}, \det \rho = 1\big\} \subset \Sym^a \big(\tilde\frX^{\irr}_{r} \right.\big).
  \end{align*}

If we decompose a partition $\pi = \big\{r_1^{a_1},\ldots, r_s^{a_s}\big\}$ into the partition $\pi' = \big\{r_2^{a_2},\ldots, r_{s}^{a_{s}}\big\}$ of 
$r' = \sum\limits_{i=2}^{s} a_ir_i$ and $\big\{r_1^{a_1}\big\}$, we have a Zariski locally trivial fibration
$
	\frX_r^{\pi} \to \tilde\frX_{r'}^{\pi'}
$
whose fiber is $\Sym^{a_1}_0\big(\tilde\frX^{\irr}_{r_1}\big)$. Therefore, we get that
$$
	[\frX_r^{\pi}] = \big[\Sym^{a_1}_0\big(\tilde\frX^{\irr}_{r_1}\big)\big][\tilde\frX_{r'}^{\pi'}] = \big[\Sym^{a_1}_0\big(\tilde\frX^{\irr}_{r_1}\big)\big] \prod_{t=2}^s \big[\Sym^{a_t}\big(\tilde\frX^{\irr}_{r_t}\big)\big].
$$

\begin{remark}\label{rmk:symmetric-prod-motive}
In many cases, the previous factors can be easily computed using known results:
\begin{itemize}
	\item The motive of the symmetric product $\Sym^a(X)$ can be computed from the motive of $X$ by means of the plethystic exponential, $\PExp$ \cite{Feng}. To be precise, if the motive of $X$ is generated by the Lefschetz motive, we have the following \cite[Proposition 4.6]{Florentino-Nozad-Zamora:2019a}

$$
	\sum_{a = 0}^\infty \big[\Sym^a(X)\big] z^a = \PExp\big([X]z\big),
$$
where, at the right hand side, we see $[X]z$ as a polynomial in $\ZZ[q,z]$.
	\item If $r_1 = 1$, we have that $\Sym^{a}_0\big(\tilde\frX^{\irr}_{1}\big)$ are the totally reducible representations of rank $a$. By \cite[Prop.\ 5.2]{MP}, their motives are $q^{a-1}$. In particular, $\big[\Sym^{1}_0\big(\tilde\frX^{\irr}_{1}\big)\big] = \big[\frX^{\irr}_{1}\big] = 1$.
\end{itemize}
\end{remark}

With these observations at hand, we list the motives of each of the strata of (\ref{eqn:78}) of the rank $4$ case: 

\begin{itemize}
\item $[\frX_4^{\irr}]=[\frM_4^{\irr}]$ is computed in (\ref{eqn:frM4irr}). 
\item For the partition $\{3,1\}$, by the comments above, we have $\big[\frX_4^{\{3,1\}}\big]=[\tilde\frX^{\irr}_3]$.
The motive of this space is given in \cite[Cor.\ 10.3]{MP} (\cite[Prop.\ 10.1]{MP} specifies 
which strata correspond to irreducible representations, hence the polynomials
therein $P_0, P_5, P_6$ should be omitted). Let us denote for simplicity $P_1= (q-1)(q^4+4q^3-3q^2-15q+12)$,
$P_2= (q-1)(q^4 +2q^3-3q^2- q+4)$, $P_3=(q-1)(q^2-3q+3)$,
$P_4=(q-1)(q^2-q+1)$. Then, we have the following:

\begin{itemize}
\item If $n,m\equiv 1,5 \pmod 6$, then
\begin{align*}
	[\tilde\frX^{\irr}_3] &=
   \frac1{36} (m-1)(m-2)(n-1)(n-2) P_1 \\
   &\quad +
   \frac16(n-1)(m-1)(n+m-4) P_3.
\end{align*}
\item If $n\equiv 2,4 \pmod 6$ and $m\equiv 1,5\pmod 6$, then
\begin{align*}
	[\tilde\frX^{\irr}_3] &= 
   \frac1{36} (m-1)(m-2)(n-1)(n-2) P_1 \\
   & \quad +\frac16(n-1)(m-1)(n+m-4) P_3.
\end{align*}
\item If $n\equiv 3 \pmod 6$ and $m\equiv 1,5\pmod 6$, then
\begin{align*}
	[\tilde\frX^{\irr}_3] &= 
   \frac1{36} (m-1)(m-2)n(n-3) P_1+ \frac16(m-1)(m-2) P_2 \\
   &\quad + \frac16(m-1)(mn+n^2-5n-m-2) P_3 + (m-1) P_4.
\end{align*}
\item If $n\equiv 0 \pmod 6$ and $m\equiv 1,5\pmod 6$, then 
\begin{align*}
[\tilde\frX^{\irr}_3] &= 
   \frac1{36} (m-1)(m-2)n(n-3) P_1+ \frac16(m-1)(m-2) P_2 \\
   & \quad + \frac16(m-1)(mn+n^2-5n-m-2) P_3 + (m-1) P_4.
\end{align*}
\item If $n\equiv 2,4 \pmod 6$ and $m\equiv 3 \pmod 6$, then 
\begin{align*}
[\tilde\frX^{\irr}_3] &= 
   \frac1{36}m (m-3) (n-1)(n-2) P_1+ \frac16(n-1)(n-2) P_2 \\
   & \quad +
   \frac16(n-1)(mn+m^2-n-5m-2) P_3 + (n-1) P_4.
\end{align*}
   \end{itemize}
Here note that $\gcd(n,m)=1$, and that we can interchange $n,m$ if necessary.
  
\item The partition $\{2^2\}$ corresponds to representations of the form $\rho=\rho_1\oplus \rho_2$, where
$\rho_1,\rho_2$ are irreducible representations of rank $2$ and $\det \rho_2=(\det \rho_1)^{-1}$.
There is a locally trivial fibration $\CC^* \to \tilde\frX_2^{\irr}\to \bar \frX_2^{\irr}$.
It gives a fibration
 $$
  \CC^* \to \frX_4^{\{2^2\}}\to  \Sym^2 \bar\frX_2^{\irr}
 $$
via the map $\rho\mapsto ([\rho_1],[\rho_2])$. The fiber is isomorphic to $\CC^*$ through the parametrization $\lambda \mapsto (\lambda\rho_1,\lambda^{-1}\rho_2)$. Notice that the symmetric product is taken via the action of $\ZZ_2$ 
 on $(\bar\frX_2^{\irr})^2$ by $(\rho_1,\rho_2) \mapsto 
 (\rho_2,\rho_1)$. This acts on
 the fiber of the fibration via $\lambda\mapsto \lambda^{-1}$. Therefore, by the formula \cite[Prop.\ 2.6]{lomune}, we have
 $$
 \big[\frX_4^{\{2^2\}}\big]= [\CC^*]^+ \cdot [ (\bar\frX_2^{\irr})^2]^+ + [\CC^*]^- \cdot [ (\bar\frX_2^{\irr})^2]^-\, .
 $$
 Here, for $X$ an algebraic variety with a $\ZZ_2$-action, we denote $[X]^+=[X/\ZZ_2]$ and $[X]^- = [X]-[X]^+$.
By \cite[Eqn.\ (1)]{lomune}, we have $ [\CC^*]^+=q$ and $ [\CC^*]^-=-1$. 

The motive of $\bar\frX_2^{\irr}$ is provided in \cite[Prop.\ 7.2]{MP} and it is given by
$$
	[\bar\frX_2^{\irr}] = \left\{\begin{array}{ll} \frac{(n-1)(m-1)}{4} (q-2) & \textrm{if } n,m \textrm{ odd,} \\
	\frac{(n-2)(m-1)}{4} (q-2)+\frac{m-1}2 (q-1) & \textrm{if } n \textrm{ even.} \end{array}\right.
$$

By \cite{GLM}, the symmetric product is an operation on the Grothendieck ring. By Remark \ref{rmk:symmetric-prod-motive}, if we see $\left[X\right]$ as a polynomial in $q$ denoted by $\left[X\right](q)$, we have the formula
 $$
 	\Sym^2(\left[X\right](q)) = \frac12 \left[X\right](q^2) + \frac12 \left[X\right](q)^2,
 $$
Hence for $n,m$ odd we have
  \begin{align*}
 \big[ (\bar\frX_2^{\irr})^2\big]^+ &= \big[ \Sym^2(\bar\frX_2^{\irr})\big] =
  \frac{(m-1)^2(n-1)^2}{32}(q-2)^2+ \frac{(m-1)(n-1)}{8}(q^2-2) , \\
 \big[ (\bar\frX_2^{\irr})^2\big]^- &=  \big[ (\bar\frX_2^{\irr})^2\big] -  \big[ (\bar\frX_2^{\irr})^2\big]^+  \\ &=   
  -\frac{(n-1)^2(m-1)^2}{32}(q- 2)^2 + \frac{(n-1)(m-1)}{8}(-q^2 + 2q - 2) ,\\ 
  \big[\frX_4^{\{2^2\}}\big] &= \frac{(m-1)^2(n-1)^2}{32} (q^3 - 3q^2 + 4) + \frac{(m-1)(n-1)}{8}(q^3 + q^2 - 4q + 2),
 \end{align*}  
and for $n$ even and $m$ odd,
  \begin{align*}
 \big[ (\bar\frX_2^{\irr})^2\big]^+  =& \, \big[ \Sym^2(\bar\frX_2^{\irr})\big] =  \frac{(m-1)^2(n-1)^2}{32}(q-2)^2+ \frac{(m-1)(n-1)}{8}(q^2-2) \\ 
 & + \frac{(m-1)^2}{8}(q-1)^2 + \frac{m-1}{4}(q^2-1) + \frac{(n-1)(m-1)^2}{8}(q^2-3q+2)
  , \\
 \big[ (\bar\frX_2^{\irr})^2\big]^- =& -\frac{(n-1)^2(m-1)^2}{32}(q- 2)^2 + \frac{(n-1)(m-1)}{8}(-q^2 + 2q - 2)
   \\
   &  - \frac{(m-1)^2}{8}(q-1)^2 - \frac{m-1}{4}(q-1)^2 - \frac{(n-1)(m-1)^2}{8}(q^2-3q+2) ,\\ 
  \big[\frX_4^{\{2^2\}}\big] =& \, \frac{(m-1)^2(n-1)^2}{32}(q^3 - 3q^2 + 4)+ \frac{(m-1)(n-1)}{8}(q^3 + q^2 - 4q + 2) \\ & + \frac{(m-1)^2}{8}(q^3 - q^2 - q + 1) + \frac{m-1}{4}(q^3 + q^2 - 3q + 1) \\
  & + \frac{(n-1)(m-1)^2}{8}(q^3 - 2q^2 - q + 2).
 \end{align*}  

\item For the partition $\big\{ 2,1^2 \big\}$, we have that
$$
	\big[\frX_4^{\{2,1^2\}}\big] = \big[\Sym^{2}_0\big(\tilde\frX^{\irr}_{1}\big)\big]\, [\tilde\frX_{2}] = q[\tilde\frX_{2}].
$$

\item The partition $\big\{1^4\big\}$, we directly have that
 $$
 \big[\frX_4^{\{1^4\}}\big]=q^3.
 $$
\end{itemize}

Putting all these contribution together, this completes the proof of Theorem \ref{thm:main}.


\end{document}